\theoremstyle{plain}
\newtheorem{xx}{xx}[section]
\newtheorem{thm}[xx]{Theorem}
\newtheorem*{thm*}{Theorem}
\newtheorem{prop}[xx]{Proposition}
\newtheorem{cor}[xx]{Corollary}
\newtheorem{lem}[xx]{Lemma}
\newtheorem*{lem*}{Lemma}
\theoremstyle{definition}
\newtheorem{defn}[xx]{Definition}
\newtheorem*{defn*}{Definition}
\newtheorem{ex}[xx]{Example}
\theoremstyle{remark}
\newtheorem{rem}[xx]{Remark}
\numberwithin{equation}{xx}
\DeclareMathOperator{\coker}{coker}
\DeclareMathOperator{\Def}{Def}
\DeclareMathOperator{\cDef}{\mathsf{Def}}
\DeclareMathOperator{\depth}{depth}
\DeclareMathOperator{\cE}{E}
\DeclareMathOperator{\End}{End}
\DeclareMathOperator{\ev}{ev}
\DeclareMathOperator{\Ext}{Ext}
\DeclareMathOperator{\grade}{grade}
\DeclareMathOperator{\cH}{H}
\DeclareMathOperator{\Hom}{Hom}
\DeclareMathOperator{\uHom}{\ul{\Hom}}
\DeclareMathOperator{\id}{id}
\DeclareMathOperator{\im}{im}
\DeclareMathOperator{\mSpec}{\fr{m}-Spec}
\DeclareMathOperator{\pdim}{pdim}
\DeclareMathOperator{\Proj}{Proj}
\DeclareMathOperator{\Resdim}{res{.}dim}
\DeclareMathOperator{\Rk}{rk}
\DeclareMathOperator{\Sets}{\cat{Sets}}
\DeclareMathOperator{\Soc}{Soc}
\DeclareMathOperator{\Spec}{Spec}
\DeclareMathOperator{\Sym}{Sym}
\DeclareMathOperator{\Syz}{Syz}
\DeclareMathOperator{\Tor}{Tor}
\newcommand{\co}{\colon}
\newcommand{\ra}{\rightarrow}
\newcommand{\la}{\leftarrow}
\newcommand{\lra}{\longrightarrow}
\newcommand{\thr}{\twoheadrightarrow}
\newcommand{\hra}{\hookrightarrow}
\newcommand{\ot}{{\otimes}}
\newcommand{\vare}{\varepsilon}
\newcommand{\vL}{\varLambda}
\newcommand{\lRa}{\Leftrightarrow}
\newcommand{\Ra}{\Rightarrow}
\newcommand{\sbeq}{\subseteq}
\newcommand{\tn}{\textnormal}
\newcommand{\Algf}{\cat{Alg}{}^{\textnormal{fl}}}
\newcommand{\ucoh}{\underline{\cat{coh}}}
\newcommand{\Pf}{\hat{\cat{P}}{}^{\textnormal{fl}}}
\newcommand{\modf}{\cat{mod}{}^{\textnormal{fl}}}
\newcommand{\hot}{{\tilde{\otimes}}}
\newcommand{\resdim}[2]{{#1}\textnormal{-}\Resdim{#2}}
\newcommand\bdot{\ensuremath{%
  \mathchoice%
   {\mskip\thinmuskip\lower0.2ex\hbox{\scalebox{1.5}{$\cdot$}}\mskip\thinmuskip}}%
   {\mskip\thinmuskip\lower0.2ex\hbox{\scalebox{1.5}{$\cdot$}}\mskip\thinmuskip}%
   {\lower0.3ex\hbox{\scalebox{1.2}{$\cdot$}}}%
   {\lower0.3ex\hbox{\scalebox{1.2}{$\cdot$}}}%
}
\renewcommand{\phi}{\varphi}
\renewcommand{\geq}{\geqslant}
\renewcommand{\leq}{\leqslant}
\newcommand{\BB}[1]{\mathbb{{#1}}}
\newcommand{\cat}[1]{\mathsf{{#1}}}
\newcommand{\df}[2]{{\Def}_{#2}^{#1}}
\newcommand{\cdf}[2]{{\cDef}_{#2}^{#1}}
\newcommand{\fr}[1]{\mathfrak{{#1}}}
\newcommand{\hm}[4]{{\Hom}_{#2}^{#1}({#3},{#4})}
\newcommand{\uhm}[4]{{\uHom}_{#2}^{#1}({#3},{#4})}
\newcommand{\mc}[1]{\mathcal{#1}}
\newcommand{\mr}[1]{\mathrm{{#1}}}
\newcommand{\ms}[1]{\mathscr{{#1}}}
\newcommand{\nd}[3]{{\End} _{#2}^{#1}({#3})}
\newcommand{\Q}{\mathcal{O}}
\newcommand{\tor}[4]{{\Tor}_{#2}^{#1}({#3},{#4})}
\newcommand{\ul}[1]{\underline{{#1}}}
\newcommand{\xla}[1]{\xleftarrow{{#1}}}
\newcommand{\xra}[1]{\xrightarrow{{#1}}}
\newcommand{\xt}[4]{{\Ext} _{#2}^{#1}({#3},{#4})}
\newcommand{\syz}[2]{{\Syz}_{#2}^{#1}}
\begin{document}
\title[Stably reflexive modules]
{Stably reflexive modules \\and a Lemma of Knudsen}
\author{Runar Ile}
\address{Department of 
Mathematics \\University of Bergen \\
5008 Bergen \\ NORWAY}
\email{runar.ile@math.uib.no} 
\thanks{\textit{Acknowledgement}. Part of this work was done during the author's pleasant stay at the Royal Institute of Technology in Stockholm} 
\keywords{Stable curve, Cohen-Macaulay approximation, versal deformation, Gorenstein dimension, pointed singularity} 
\subjclass[2010]
{Primary 13C60, 14B07; Secondary 13D02, 14D23}
\begin{abstract} 
In his fundamental work on the stack \(\bar{\ms{M}}_{g,n}\) of stable \(n\)-pointed genus \(g\) curves, Finn F.\ Knudsen introduced the concept of a stably reflexive module in order to prove a key technical lemma. We propose an alternative definition and generalise the results in his appendix to \cite{knu:83a}. Then we give a `coordinate free' generalisation of his lemma, generalise a construction used in Knudsen's proof concerning versal families of pointed algebras, and show that Knudsen's stabilisation construction works for plane curve singularities. In addition we prove approximation theorems generalising Cohen-Macaulay approximation with stably reflexive modules in flat families. The generalisation is not covered (even in the closed fibres) by the Auslander-Buchweitz axioms.
\end{abstract}
\maketitle
\section{Introduction}
In order to establish the stabilisation map from the universal curve \(\bar{\ms{C}}_{g,n}\) to the stack \(\bar{\ms{M}}_{g,n+1}\) Knudsen developed in \cite{knu:83a} a theory of what he called stably reflexive modules with respect to a flat ring homomorphism. In this article we take a closer look at this theory and some of its applications both in approximation theory of modules and in deformation theory.

We reformulate Knudsen's theory by first defining an absolute notion of a stably reflexive module over a ring. In the noetherian case this is the same as a module of Gorenstein dimension \(0\). Then we define a stably reflexive module with respect to a flat ring homomorphism as a flat family of stably reflexive modules. By cohomology-and-base-change theory this definition is equivalent to Knudsen's.

We introduce a descending series of additive categories of modules, called \emph{\(n\)-stably reflexive modules}. For \(n=1\) these are the reflexive modules. The stably reflexive modules are \(n\)-stably reflexive for all \(n\). We also define an absolute and a relative notion of \emph{\(n\)-stably reflexive complexes}. An \(n\)-stably reflexive complex \((E^{\bdot},d^{\bdot})\) gives an \(n\)-stably reflexive module \(\coker d^{-1}\). In the noetherian case the stably reflexive complexes are Tate resolutions relative to a base ring. Finally, there is a third concept of a flat family of  \emph{\(n\)-orthogonal  modules} defined by a one-sided cohomology condition. An \(2n\)-orthogonal  module determines and is determined by an \(n\)-stably reflexive module  through the \((n{+}1)\)-syzygy. Together Propositions \ref{prop.main}, \ref{prop.refcplx} and \ref{prop.ortho} extend Theorem 2 in \cite[Appendix]{knu:83a}; see Remark \ref{rem.syzref}.

Axiomatic Cohen-Macaulay approximation was introduced by M. Auslander and R.-O.\ Buchweitz in \cite{aus/buc:89}. This theory has recently been defined in terms of fibred categories as to give approximation results for various classes of flat families of modules; see \cite{ile:12a}.  
In Theorem \ref{thm.approx3} we extend the approximation results of Auslander and Buchweitz in the classical case of modules of finite Gorenstein dimension to the relative setting: Let \(h\co S\ra R\) be a faithfully flat finite type algebra of noetherian rings and let \(N\) be an \(S\)-flat finite \(R\)-module. If \(N\) is stably reflexive w.r.t.\ \(h\), respectively has finite projective dimension as \(R\)-module, then \(N\) belongs to the module subcategory \(\cat{X}_{h}\), respectively \(\Pf_{h}\). If some syzygy \(\syz{R}{i}N\) is stably reflexive w.r.t.\ \(h\) then \(N\) belongs to the category \(\cat{Y}_{h}\). For any \(N\) in \(\cat{Y}_{h}\) there are short exact sequences of \(R\)-modules
\begin{equation}\label{eq.A}
0\ra L\lra M\lra N\ra 0\qquad\text{and}\qquad 0\ra N\lra L'\lra M'\ra 0
\end{equation}
with \(M\) and \(M'\) in \(\cat{X}_{h}\) and \(L\) and \(L'\) in \(\Pf_{h}\). 
The map \(M\ra N\) is a right \(\cat{X}_{h}\)-approximation and the map \(N\ra L'\) is a left \(\Pf_{h}\)-approximation. In particular \(\cat{X}_{h}\) is contravariantly finite in \(\cat{Y}_{h}\) and \(\Pf_{h}\) is covariantly finite in \(\cat{Y}_{h}\). In the local case there are minimal (and hence unique) approximations \eqref{eq.A}. 
The association \(N\mapsto X\) for \(X\) equal to \(M\) and \(L'\) induce functors on the stable quotient categories. They are adjoint to the natural inclusions. Moreover, the approximations and the functors are well behaved w.r.t.\ base change \(S\ra S'\); see Theorem \ref{thm.approx3}. In Theorems \ref{thm.approx1} and \ref{thm.approx2} we give analogous results for larger classes of modules \(N\) by using \(n\)-stably reflexive modules in the approximations. The category \(\Pf_{h}\) is replaced by the category \(\Pf(s)_{h}\) of modules of projective dimension less than or equal to \(s\) and \(\cat{Y}_{h}\) is replaced by categories depending on \(n\) and \(s\). Note that these categories do not satisfy the axioms of Auslander and Buchweitz. We refer to the introduction of \cite{ile:12a} for a discussion and further references on Cohen-Macaulay approximation.

The stabilisation map takes a stable \(n\)-pointed curve \(C\) with an extra section \(\Delta\) to a stable \((n+1)\)-pointed curve \(C^{s}\) \cite[2.4]{knu:83a}. Knudsen's lemma is applied in the critical case where \(\Delta\) hits a node in order to obtain flatness, functoriality of the construction of \(C^{s}\), and the existence of a functorial lifting of \(\Delta\). While Knudsen both in his original and his recent account applies several explicit calculations for a quadratic form to prove this; see \cite{knu:83a,knu:12}, a main benefit of our approach to the stably reflexive modules is how readily his results are generalised. Knudsen's lemma \cite[2.2]{knu:83a}, which deals with deformations of a \(1\)-dimensional ordinary double point, is generalised in Theorem \ref{prop.Finn} to any pointed deformation \(S\ra R\ra S\) of a \(1\)-dimensional Gorenstein ring \(A=R/\fr{m}_{S}R\) defined over an arbitrary field. 
Theorem \ref{prop.Finn} says that the ideal \(I\) defining the point; \(R/I\cong S\), as module is stably reflexive w.r.t.\ \(S\ra R\) and \(\hm{}{R}{I}{R}/R\) is isomorphic to \(S\). By the established results on stably reflexive modules these properties are preserved by base change. In particular these two results imply functoriality of the stabilisation construction as explained in Section \ref{sec.stab}. Theorem \ref{prop.Finn} also says that \(\hm{}{R}{I}{R}\) is isomorphic to the fractional ideal \(\{f\in K(R)\,\vert\, f{\cdot} I\sbeq R\}\), and determines the image of \(\fr{m}_{A}\ot\hm{}{A}{\fr{m}_{A}}{A}\ra A\). Knudsen applies the former in his proof of the key lemma; see \cite[2.2]{knu:83a} and \cite{knu:12}, and the latter implies that stabilisation inserts a \(\BB{P}^{1}\) if the point is singular. 

In order to establish flatness of \(C^{s}\), Knudsen applies deformation theory. For a \(1\)-dimensional ordinary double point he finds an explicit formally versal formal family of pointed local rings for which he analyses his construction of \(C^{s}\); see his recent account \cite{knu:12}. In Theorem \ref{thm.main} we show quite generally that the `square' of a versal family for deformations of a ring together with the `diagonal' gives a versal family for deformations of the pointed ring. In Corollary \ref{cor.main2} we calculate the versal family for a pointed isolated complete intersection singularity. In Proposition \ref{prop.mf2} we extend Knudsen's stabilisation construction to any flat family of plane curve singularities and obtain the relevant features. In the last section we explain the main steps in the proof of the stabilisation map \cite[2.4]{knu:83a} and how these results apply. 
\section*{Acknowledgement}
I thank Finn F.\ Knudsen who suggested to me, many years ago, to study his Lemma 2.2 in \cite{knu:83a}. In the summer 2009 he urged me again and this time I wrote up roughly the first half of this article. I believe none of us looked much more at it until Finn went to Ann Arbor in January 2011 where he took on his own independent, explicit approach, resulting in \cite{knu:12}. Encouraged, I completed my generalisation of the lemma. Finn kept sending me versions of his manuscript and the explicit and elegant construction of the hull inspired me to find the general Theorem \ref{thm.main} and Corollary \ref{cor.main2}.

I would also like to thank the anonymous referee for a detailed and useful report which contributed to a more readable article.
\section{Preliminaries}
All rings are unital and commutative. If \(A\) is a ring let \(\cat{Mod}_{A}\) denote the category of \(A\)-modules and \(\cat{mod}_{A}\) the category of finite \(A\)-modules.
\subsection{Coherent modules}
We follow \cite[Chap.\ 1, \S2, Exc.\ 11-12]{bou:98} and \cite[Chap.\ 2]{gla:89}.
Let \(A\) be a ring. An \(A\)-module \(N\) is \emph{coherent} if \(N\) is finite and if all finite submodules of \(N\) are finitely presented. A ring is coherent if it is coherent as a module over itself. 
All finitely presented modules over a ring \(A\) are coherent if and only if \(A\) is coherent. A polynomial ring over a noetherian ring is coherent. A coherent ring modulo a finitely generated ideal is coherent.
Let \(\cat{coh}_{A}\) denote the full subcategory of \(\cat{Mod}_{A}\) of coherent modules. It is an abelian category closed under tensor products and internal \(\Hom\). 
Assume \(A\) is coherent. Then any coherent module has a resolution by finite free modules. Hence if \(M\) and \(N\) are coherent then \(\tor{A}{i}{M}{N}\) and \(\xt{i}{A}{M}{N}\) are coherent \(A\)-modules for all \(i\). Moreover \(\mc{S}^{-1}M\) is a coherent \(\mc{S}^{-1}A\)-module for any multiplicatively closed subset \(\mc{S}\) of \(A\).
\subsection{Base change}
The main tool for reducing properties to the fibres in a flat family will be the base change theorem.
We follow the quite elementary and general approach of A.\ Ogus and G.\ Bergman \cite{ogu/ber:72}.
\begin{defn}
Let \(h\co S\ra R\) be a ring homomorphism and \(I\) an \(S\)-module.
Let \(F\) be an \(S\)-linear functor of some additive subcategory of \(\cat{Mod}_{S}\) containing \(S\) to \(\cat{Mod}_{R}\). Then the \emph{exchange map \(e_{I}\) for \(F\)} is defined as the \(R\)-linear map \(e_{I}\co F(S)\ot_{S}I\ra F(I)\) given by \(\xi\ot u\mapsto F(u)(\xi)\) where we consider \(u\) as the multiplication map \(u\co S\ra I\). Let \(\mSpec R\) denote the set of closed points in \(\Spec R\).
\end{defn}
\begin{prop}[{\cite[5.1-2]{ogu/ber:72}}]\label{prop.nakayama}
Let \(h\co S\ra R\) be a ring homomorphism with \(S\) noetherian\textup{.} Suppose \(\{F^{q}\co \cat{mod}_{S}\ra \cat{mod}_{R}\}_{q\in \BB{Z}}\) is an \(h\)-linear cohomological \(\delta\)-functor \textup{(}in particular \(F^{q}\) is the zero functor for \(q<0\)\textup{).} 
\begin{enumerate}
\item[(i)] If the exchange map \(e_{S/\fr{n}}^{q}\co F^{q}(S)\ot_{S}S/\fr{n}\ra F^{q}(S/\fr{n})\) is surjective for all \(\fr{n}\) in \(Z=\im\{\mSpec R\ra\Spec S\}\)\textup{,} then \(e_{I}^{q}\co F^{q}(S)\ot_{S}I\ra F^{q}(I)\) is an isomorphism for all \(I\) in \(\cat{mod}_{S}\)\textup{.} 
\item[(ii)] If \(e_{S/\fr{n}}^{q}\) is surjective for all \(\fr{n}\) in \(Z\)\textup{,} then \(e_{I}^{q-1}\) is an isomorphism for all \(I\) in \(\cat{mod}_{S}\) if and only if \(F^{q}(S)\) is \(S\)-flat\textup{.}
\end{enumerate}
\end{prop}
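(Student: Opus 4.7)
The plan is to treat the functors \(F^{q}\) as half-exact (meaning \(F^{q}(I')\to F^{q}(I)\to F^{q}(I'')\) is exact in the middle for any short exact sequence \(0\to I'\to I\to I''\to 0\)) and to run a Nakayama-type dévissage on the cokernel \(C^{q}(I):=\coker e^{q}_{I}\) and the kernel \(K^{q}(I):=\ker e^{q}_{I}\). Both are finite \(R\)-modules by the \(h\)-linearity of \(F^{q}\), and both vanish on finite free \(S\)-modules by additivity. Comparing the right-exact row \(F^{q}(S)\ot I'\to F^{q}(S)\ot I\to F^{q}(S)\ot I''\to 0\) with the \(\delta\)-exact row \(F^{q-1}(I'')\to F^{q}(I')\to F^{q}(I)\to F^{q}(I'')\to F^{q+1}(I')\) via the exchange maps and chasing cokernels and kernels through the snake lemma yields sequences that propagate the vanishing of \(C^{q}\) (resp.\ \(K^{q}\)) across short exact sequences, with error terms controlled by \(F^{q\pm 1}\) data.

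For (i), I would first establish \(C^{q}(I)=0\). Since \(C^{q}(I)\) is a finite \(R\)-module, Nakayama over \(R\) reduces the task to checking \(C^{q}(I)_{\fr{m}}=0\) for every \(\fr{m}\in\mSpec R\). Setting \(\fr{n}=h^{-1}(\fr{m})\in Z\), the \(S\)-action on \(C^{q}(I)\) factors through \(R\), so the question becomes \(\fr{n}\)-local. A filtration of \(I/\fr{n}^{k}I\) whose successive quotients are \(S/\fr{n}\)-vector spaces, together with the dévissage above, gives \(C^{q}(I/\fr{n}^{k}I)=0\) for every \(k\) from the hypothesis \(C^{q}(S/\fr{n})=0\); the noetherianness of \(S\) and faithful flatness of the \(\fr{n}\)-adic completion on finite modules then transfers this to \(C^{q}(I)=0\). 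Injectivity of \(e^{q}_{I}\) is obtained by rerunning the argument on \(K^{q}\); the contribution from \(C^{q-1}\) that arises via the connecting map is handled by a dimension shift using a finite free presentation of \(I\), which reduces it to surjectivity of \(e^{q}\) already proved.

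For (ii), part (i) guarantees that \(e^{q}_{I}\) is an isomorphism for every \(I\in\cat{mod}_{S}\). Pick a finite free presentation \(0\to K\to P\to I\to 0\); under the isomorphism \(e^{q}\), the kernel of \(F^{q}(K)\to F^{q}(P)\) matches the kernel of \(F^{q}(S)\ot K\to F^{q}(S)\ot P\), namely \(\tor{S}{1}{F^{q}(S)}{I}\). If \(F^{q}(S)\) is \(S\)-flat this vanishes, so the connecting map \(F^{q-1}(I)\to F^{q}(K)\) is zero, the row \(F^{q-1}(K)\to F^{q-1}(P)\to F^{q-1}(I)\to 0\) is right exact, and a right-exact four-lemma using \(e^{q-1}_{P}\) an isomorphism (free case) together with \(e^{q-1}_{K}\) surjective (established inductively by applying the same argument to a free presentation of \(K\)) forces \(e^{q-1}_{I}\) to be an isomorphism. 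Conversely, if \(e^{q-1}\) is always an isomorphism, the surjectivity of \(F^{q-1}(I)\to F^{q-1}(I'')\) derived from right exactness of tensor forces the connecting map \(F^{q-1}(I'')\to F^{q}(I')\) to vanish for every short exact sequence; via \(e^{q}\) iso this gives \(\tor{S}{1}{F^{q}(S)}{I''}=0\) for every quotient \(I''\), so \(F^{q}(S)\) is flat.

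The principal obstacle is the dévissage step in (i): transferring vanishing from the privileged modules \(S/\fr{n}\) with \(\fr{n}\in Z\) to an arbitrary finite \(I\) whose support need not meet \(Z\). The saving device is that \(C^{q}(I)\) carries an \(R\)-module structure through \(h\)-linearity, so its Nakayama reduction lands exactly on \(Z\); the noetherianness of \(S\) then makes the \(\fr{n}\)-adic passage from \(I/\fr{n}^{k}I\) back to \(I\) rigorous via completion.
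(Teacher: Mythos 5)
The paper does not prove this proposition; it quotes it from Ogus--Bergman \cite{ogu/ber:72}, so your sketch has to be measured against their argument. Much of your architecture matches theirs and is sound: passing to the cokernel functor \(C^{q}=\coker e^{q}\), which is indeed half-exact, \(h\)-linear and takes values in finite \(R\)-modules; using Nakayama over \(R\) so that the reduction lands exactly on \(\fr{n}=h^{-1}(\fr{m})\in Z\); deducing injectivity of \(e^{q}_{I}\) from surjectivity on all finite modules by a four-lemma chase on a finite free presentation \(0\ra J\ra P\ra I\ra 0\); and all of part (ii), where flatness of \(F^{q}(S)\) is equivalent to the vanishing of \(\tor{S}{1}{F^{q}(S)}{I}\cong\im\{F^{q-1}(I)\ra F^{q}(J)\}\) for all \(I\).

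The genuine gap is the central step of (i): the passage from \(C^{q}(I/\fr{n}^{k}I)=0\) to \(C^{q}(I)=0\). ``Faithful flatness of the \(\fr{n}\)-adic completion'' does nothing here: \(C^{q}\) is an abstract half-exact functor, not tensoring with a fixed module, so it commutes with neither inverse limits nor completion, and there is no exact sequence relating \(C^{q}(I)\) to \(\varprojlim C^{q}(I/\fr{n}^{k}I)\). What half-exactness applied to \(0\ra\fr{n}^{k}I\ra I\ra I/\fr{n}^{k}I\ra 0\) gives is only \(C^{q}(I)=\im\{C^{q}(\fr{n}^{k}I)\ra C^{q}(I)\}\), and there is no a priori reason this image lies in \(\fr{n}C^{q}(I)\): the surjection \(\fr{n}^{k}\ot_{S}I\thr\fr{n}^{k}I\) need not stay surjective after applying a half-exact functor, so \(S\)-linearity cannot be invoked. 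A second, smaller defect: \(\fr{n}\) is only the contraction of a maximal ideal of \(R\) and need not be maximal in \(S\), so the graded pieces of the \(\fr{n}\)-adic filtration of \(I/\fr{n}^{k}I\) are finite \(S/\fr{n}\)-modules rather than direct sums of copies of \(S/\fr{n}\), and the hypothesis concerns \(S/\fr{n}\) alone. Ogus--Bergman avoid both problems by a noetherian induction on supports: d\'evissage reduces to \(I=S/\fr{p}\); if \(\fr{p}\not\sbeq\fr{n}\) the module \(C^{q}(S/\fr{p})_{\fr{m}}\) vanishes because \(\fr{p}\) annihilates it while meeting \(R\setminus\fr{m}\); if \(\fr{p}\subsetneq\fr{n}\), pick \(a\in\fr{n}\setminus\fr{p}\), apply half-exactness of \(C^{q}\) to \(0\ra S/\fr{p}\xra{a}S/\fr{p}\ra S/(\fr{p}+aS)\ra 0\) together with the inductive vanishing on the smaller support to get \(C^{q}(S/\fr{p})=a\,C^{q}(S/\fr{p})\sbeq\fr{m}C^{q}(S/\fr{p})_{\fr{m}}\), and conclude by Nakayama over \(R_{\fr{m}}\); the case \(\fr{p}=\fr{n}\) is the hypothesis. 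This induction is the actual content of ``Nakayama's lemma for half-exact functors'' and is what your sketch replaces with an argument that does not go through.
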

\begin{rem}
If the \(F^{q}\) in addition extend to functors of all \(S\)-modules \(F^{q}\co \cat{Mod}_{S}\ra\cat{Mod}_{R}\) which commute with filtered direct limits, then the conclusions are valid for all \(I\) in \(\cat{Mod}_{S}\).
If \(F^{q}(S/\fr{n})=0\) for all \(\fr{n}\in Z\) then \(F(S)\ot_{R}R/\fr{m}=0\) for all \(\fr{m}\in\mSpec R\) by (i) hence \(F(S)=0\) by Nakayama's lemma and so \(F^{q}=0\) by (i) again.
For \(q=0\) statement (ii) reads: `If \(e_{S/\fr{n}}^{0}\) is surjective for all \(\fr{n}\) in \(Z\)\textup{,} then \(F^{0}(S)\) is \(S\)-flat\textup{.}'
\end{rem} 
\begin{ex}\label{ex.nakcplx}
Suppose \(R\) is coherent.
Let \(K\co K^{0}\ra K^{1}\ra\dots\) be a complex of \(S\)-flat and coherent \(R\)-modules. Define \(F^{q}\co \cat{mod}_{S}\ra\cat{coh}_{R}\) by \(F^{q}(I)=\cH^{q}(K\ot_{S}I)\). Then \(\{F^{q}\}\) is an \(h\)-linear cohomological \(\delta\)-functor which extends to all \(S\)-modules and commutes with direct limits.
\end{ex}
Let \(\fr{m}_{A}\) denote the maximal ideal in a local ring \(A\).
\begin{cor}\label{cor.nak}
Assume that \(S\ra R\) is a flat and local homomorphism of noetherian rings and let \((G)=(G_{1},\dots,G_{n})\) be a sequence in the maximal ideal \(\fr{m}_{R}\)\textup{.} 

Then \((G)\) is a regular sequence and \(R/(G)\) is \(S\)-flat if and only if the image of \((G)\) in \(R/\fr{m}_{S}R\) is a regular sequence\textup{.}
\end{cor}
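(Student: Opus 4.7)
The plan is to apply Proposition \ref{prop.nakayama} and Example \ref{ex.nakcplx} to the Koszul complex $K^{\bdot}=K(G;R)$, re-indexed cohomologically so that $K^{q}=\Lambda^{n-q}(R^{n})$ for $0\leq q\leq n$ and $K^{q}=0$ otherwise. Each term is finite free over $R$, hence $S$-flat and coherent (noetherian rings being coherent). With $\bar{R}=R/\fr{m}_{S}R$ and $\bar{G}$ the image of $(G)$, we have $K\ot_{S}S/\fr{m}_{S}=K(\bar{G};\bar{R})$. The key translation is that $(G)$ is $R$-regular iff $H^{q}(K)=0$ for all $q<n$, $H^{n}(K)=R/(G)$, and likewise for $\bar{G}$ in $\bar{R}$.

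For the forward implication (easy direction), suppose $(G)$ is $R$-regular and $R/(G)$ is $S$-flat. Then $K\to R/(G)$ is a resolution, and since $R/(G)$ is $S$-flat, $\Tor_{i}^{S}(R/(G),S/\fr{m}_{S})=0$ for $i>0$, so tensoring with $S/\fr{m}_{S}$ preserves exactness; the resulting resolution $K(\bar{G};\bar{R})\to\bar{R}/(\bar{G})$ shows $\bar{G}$ is $\bar{R}$-regular.

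For the converse, suppose $\bar{G}$ is $\bar{R}$-regular. Define the cohomological $\delta$-functor $F^{q}(I)=\cH^{q}(K\ot_{S}I)$ via Example \ref{ex.nakcplx}. Regularity of $\bar{G}$ gives $F^{q}(S/\fr{m}_{S})=\cH^{q}(K(\bar{G};\bar{R}))=0$ for $q<n$. Since $h$ is local, $Z=\{\fr{m}_{S}\}$, so by the remark following Proposition \ref{prop.nakayama} (part (i) combined with Nakayama's lemma applied to the finite $R$-module $F^{q}(S)$ over the local ring $R$), we conclude $F^{q}=0$ identically for $q<n$. Taking $I=S$ gives $H^{q}(K)=0$ for $q<n$, i.e.\ $(G)$ is $R$-regular. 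For $S$-flatness of $R/(G)$, observe that $F^{n}(I)=\coker(K^{n-1}\ot_{S}I\to K^{n}\ot_{S}I)=R/(G)\ot_{S}I$ by right-exactness, so the exchange map $e_{S/\fr{m}_{S}}^{n}$ is (trivially) surjective. Moreover $F^{n-1}=0$, so $e_{I}^{n-1}$ is vacuously an isomorphism for every $I$. Proposition \ref{prop.nakayama}(ii) applied at $q=n$ then delivers $S$-flatness of $F^{n}(S)=R/(G)$.

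The main obstacle is really just one of packaging: getting the Koszul complex into the cohomological format demanded by Example \ref{ex.nakcplx} (which requires $K^{q}=0$ for $q<0$), and verifying that the hypotheses of Proposition \ref{prop.nakayama}(i)--(ii) are met at the two distinguished degrees $q=n$ (for flatness) and $q<n$ (for regularity). Everything else reduces to the standard fact that Koszul cohomology detects regularity, plus the locality of $h$ which collapses the set $Z$ to a single point and makes the Nakayama step immediate.
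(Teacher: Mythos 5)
Your proposal is correct. The converse direction (regularity of the image in $R/\fr{m}_{S}R$ implies regularity of $(G)$ and $S$-flatness of $R/(G)$) follows the paper's proof essentially verbatim: vanishing of $F^{q}(S/\fr{m}_{S})$ for $q<n$ plus Proposition \ref{prop.nakayama}~(i) and Nakayama give $F^{q}=0$, locality of $R$ upgrades Koszul acyclicity to regularity, and Proposition \ref{prop.nakayama}~(ii) at $q=n$ (with the right-exactness of $F^{n}$ supplying surjectivity of $e^{n}_{S/\fr{m}_{S}}$ and the vanishing of $F^{n-1}$ supplying the other hypothesis) yields flatness of $R/(G)$ --- you actually spell out the two hypotheses of (ii) more explicitly than the paper does. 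For the forward direction you depart from the paper: you argue directly that the augmented Koszul complex is an exact complex of $S$-flat modules (using flatness of $R/(G)$ to kill the relevant $\Tor^{S}$), so it stays exact after $-\ot_{S}S/\fr{m}_{S}$, whereas the paper stays inside the Ogus--Bergman formalism and runs a downward induction on $q$ through Proposition \ref{prop.nakayama}~(ii), starting from $q=n+1$, to show $F^{i}(S/\fr{m}_{S})=0$ for $i<n$. Your route is the more classical and elementary one; the paper's has the mild advantage of reusing the exact same base-change machinery in both directions. Both are complete, granted the standard fact (which you correctly flag as needing $R$ and $R/\fr{m}_{S}R$ local noetherian with the sequence in the maximal ideal) that acyclicity of the Koszul complex below top degree is equivalent to regularity of the sequence.
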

\begin{proof}
Let \(K\) be the Koszul complex of \((G)\) on \(R\) \textup{(}cohomologically graded and shifted\textup{).}
For the `only if' direction note that \(n\) is the maximal non-trivial degree of \(K\). Then \(e^{n}_{I}\) is an isomorphism for all \(I\) by Proposition \ref{prop.nakayama} (ii) with \(q=n+1\). Since \(F^{n}(S)=\cH^{n}(K)=R/(G)\) by assumption is \(S\)-flat, \(e^{n-1}_{S/\fr{m}_{S}}\) is an isomorphism by Proposition \ref{prop.nakayama} (ii) with \(q=n\). By assumption \(F^{n-1}(S)=0\) hence \(F^{n-1}(S/\fr{m}_{S})=0\). Since \(F^{n-1}(S)\) trivially is \(S\)-flat we can proceed by downward induction using Proposition \ref{prop.nakayama} (ii) to show that \(F^{i}(S/\fr{m}_{S})=0\) for all \(i<n\). Since \(R/\fr{m}_{S}R\) is a local ring, acyclic Koszul complex implies regular sequence.

Conversely, if the image of \((G)\) in \(R/\fr{m}_{S}R\) is a regular sequence, then \(F^{i}(S/\fr{m}_{S})=0\) for \(i\neq n\) and it follows that \(F^{i}(S)=0\) for \(i\neq n\) by Proposition \ref{prop.nakayama} (i). Since \(R\) is a local ring \((G)\) is a regular sequence. By Proposition \ref{prop.nakayama} (ii), \(F^{n}(S)=R/(G)\) is \(S\)-flat.
\end{proof}
\begin{ex}\label{ex.nakayama}
Suppose \(R\) is coherent. Let \(M\) and \(N\) be coherent \(R\)-modules with \(N\) \(S\)-flat. Then the functors \(F^{q}\co \cat{mod}_{S}\ra\cat{coh}_{R}\) defined by \(F^{q}(I)=\xt{q}{R}{M}{N\ot_{S}I}\) give an \(h\)-linear cohomological \(\delta\)-functor which extends to all \(S\)-modules and commutes with direct limits.
\end{ex}
Let \(S\ra R\) and \(S\ra S'\) be ring homomorphisms, \(M\) an \(R\)-module, \(R'=R\ot_{S}S'\) and \(N'\) an \(R'\)-module. Then there is a change of rings spectral sequence 
\begin{equation}\label{eq.ss}
\cE_{2}^{p,q}=\xt{q}{R'}{\tor{S}{p}{M}{S'}}{N'}\Ra\xt{p+q}{R}{M}{N'}\,.
\end{equation} 
In addition to the isomorphism \(\hm{}{R'}{M\ot_{S}S'}{N'}\cong\hm{}{R}{M}{N'}\) there are edge maps \(\xt{q}{R'}{M\ot_{S}S'}{N'}\ra\xt{q}{R}{M}{N'}\) for \(q>0\) which are isomorphisms too if \(M\) (or \(S'\)) is \(S\)-flat. If \(I'\) is an \(S'\)-module we can compose the exchange map \(e^{q}_{I'}\) (regarding \(I'\) as \(S\)-module) with the inverse of this edge map for \(N'=N\ot_{S}I'\) and obtain the \emph{base change map} \(c^{q}_{I'}\) of \(R'\)-modules 
\begin{equation}\label{eq.basechange}
c^{q}_{I'}\co \xt{q}{R}{M}{N}\ot_{S}I'\ra\xt{q}{R'}{M\ot_{S}S'}{N\ot_{S}I'}\,.
\end{equation}
We will use the following geometric notation. Suppose \(S\ra R\) is a ring homomorphism, \(M\) is an \(R\)-module and \(s\) is a point in \(\Spec S\) with residue field \(k(s)\). Then \(M_{s}\) denotes the fibre \(M\ot_{S}k(s)\) of \(M\) at \(s\) with its natural \(R_{s}=R\ot_{S}k(s)\)-module structure. Now Proposition \ref{prop.nakayama} implies the following:
\begin{cor}\label{cor.xtdef}
Let \(S\ra R\) and \(S\ra S'\) be ring homomorphisms with \(S\) noetherian and \(R\) coherent\textup{.} Suppose \(M\) and \(N\) are coherent \(R\)-modules\textup{,} \(Z=\im\{\mSpec R\ra\Spec S\}\) and \(q\) is an integer\textup{.} Assume that \(M\) and \(N\) are \(S\)-flat\textup{.}
\begin{enumerate}
\item[(i)] If\, \(\xt{q+1}{R_{s}}{M_{s}}{N_{s}}=0\) for all \(s\) in \(Z\)\textup{,} then \(c^{q}_{I'}\) in \eqref{eq.basechange} is an isomorphism for all \(S'\)-modules \(I'\)\textup{.}
\item[(ii)] If in addition \(\xt{q-1}{R_{s}}{M_{s}}{N_{s}}=0\) for all \(s\in Z\)\textup{,} then \(\xt{q}{R}{M}{N}\) is \(S\)-flat\textup{.}
\end{enumerate}
\end{cor}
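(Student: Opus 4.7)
The plan is to apply Proposition~\ref{prop.nakayama} to the cohomological $\delta$-functor
\(F^{q}(I)=\xt{q}{R}{M}{N\ot_{S}I}\) of Example~\ref{ex.nakayama}, which takes values in \(\cat{coh}_{R}\) and, by the remark following the proposition, extends to all of \(\cat{Mod}_{S}\) and commutes with filtered direct limits. Since \(M\) is \(S\)-flat, the change-of-rings edge map of the spectral sequence~\eqref{eq.ss} is an isomorphism, so for the point \(s\) corresponding to \(\fr{n}\in\Spec S\),
\[
F^{q}(k(\fr{n}))\;\cong\;\xt{q}{R_{s}}{M_{s}}{N_{s}}\,.
\]
Hence the hypothesis in (i) reads \(F^{q+1}(k(\fr{n}))=0\) for every \(\fr{n}\in Z\).

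The key step is to upgrade this residue-field vanishing to the quotient vanishing \(F^{q+1}(S/\fr{n})=0\) for every \(\fr{n}\in Z\). Granting this, the remark after Proposition~\ref{prop.nakayama} forces \(F^{q+1}=0\) on all of \(\cat{Mod}_{S}\). In particular \(F^{q+1}(S)=0\) is \(S\)-flat and \(e^{q+1}_{S/\fr{n}}\) is trivially surjective, so Proposition~\ref{prop.nakayama}(ii) at index \(q+1\) yields that \(e^{q}_{I}\) is an isomorphism for every \(S\)-module \(I\). Composing with the inverse of the edge isomorphism for \(N\ot_{S}I'\) identifies \(e^{q}_{I'}\) with \(c^{q}_{I'}\) of~\eqref{eq.basechange}, proving (i).

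For (ii), repeat the upgrade argument at index \(q-1\) using the additional hypothesis to obtain \(F^{q-1}=0\), hence \(e^{q-1}_{I}\) is an isomorphism for every \(I\). Part (i) already produces the surjectivity of \(e^{q}_{S/\fr{n}}\) required by Proposition~\ref{prop.nakayama}(ii) at index \(q\), which then yields that \(F^{q}(S)=\xt{q}{R}{M}{N}\) is \(S\)-flat.

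The main obstacle is the upgrade step. Since \(F^{q+1}(S/\fr{n})\) is a coherent \(R\)-module annihilated by \(\fr{n}\), Nakayama reduces its vanishing to checking \(F^{q+1}(S/\fr{n})_{\fr{m}}=0\) at each \(\fr{m}\in\mSpec R\) with \(\fr{n}R\sbeq\fr{m}\). When \(h^{-1}(\fr{m})=\fr{n}\), the localisation at \(\fr{m}\) coincides with a localisation of \(\xt{q+1}{R_{s}}{M_{s}}{N_{s}}\), which vanishes by hypothesis. When \(h^{-1}(\fr{m})=\fr{n}'\supsetneq\fr{n}\), feed the short exact sequence of \(S\)-modules \(0\to\fr{n}'/\fr{n}\to S/\fr{n}\to S/\fr{n}'\to 0\), tensored over \(S\) with the flat module \(N\), into the long exact Ext sequence and reduce, by Noetherian induction together with a dévissage of \(\fr{n}'/\fr{n}\) along primes of \(S\) containing \(\fr{n}\), to the vanishing at \(\fr{n}'\in Z\) handled by the first case.
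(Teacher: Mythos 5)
Your overall route is the intended one: the corollary is meant to fall out of Proposition \ref{prop.nakayama} applied to the \(\delta\)-functor \(F^{q}(I)=\xt{q}{R}{M}{N\ot_{S}I}\) of Example \ref{ex.nakayama}, with the edge maps of \eqref{eq.ss} (isomorphisms because \(M\) is \(S\)-flat) converting the fibrewise hypotheses into \(F^{q+1}(k(\fr{n}))=0\), and your formal deductions from parts (i) and (ii) of the proposition at the indices \(q+1\), \(q\) and \(q-1\) are all correct (the paper gives no proof at all, treating the corollary as immediate). You are also right to flag that, read literally, Proposition \ref{prop.nakayama} asks about \(F^{q+1}(S/\fr{n})\) rather than \(F^{q+1}(k(\fr{n}))\); bridging the two is exactly the content of Ogus--Bergman's ``Nakayama's lemma for half-exact functors'', which the paper silently invokes.

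The problem is that your bridge is circular in its second case. When \(\fr{n}'=h^{-1}(\fr{m})\supsetneq\fr{n}\), the module \(\fr{n}'/\fr{n}\) is a nonzero torsion-free \(S/\fr{n}\)-module, so \(\fr{n}\in\Ass(\fr{n}'/\fr{n})\) and every d\'evissage of it contains a copy of \(S/\fr{n}\) itself; your proposed reduction therefore presupposes the vanishing of \(F^{q+1}(S/\fr{n})_{\fr{m}}\) that it is meant to establish (take \(S=k[x]\), \(\fr{n}=0\), \(\fr{n}'=(x)\): then \(\fr{n}'/\fr{n}\cong S/\fr{n}\)). No purely homological d\'evissage can close this loop; the missing ingredient is Nakayama's lemma over \(R_{\fr{m}}\). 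Concretely, one proves \(F^{q+1}(S/\fr{p})_{\fr{m}}=0\) for \emph{all} primes \(\fr{p}\) of \(S\) and all \(\fr{m}\in\mSpec R\), by Noetherian induction (assuming the claim for all primes strictly containing \(\fr{p}\)). The claim is trivial unless \(\fr{p}\sbeq\fr{q}:=h^{-1}(\fr{m})\in Z\); if \(\fr{p}=\fr{q}\) your first case applies; if \(\fr{p}\subsetneq\fr{q}\), choose \(t\in\fr{q}\setminus\fr{p}\) and use \(0\ra S/\fr{p}\xra{t}S/\fr{p}\ra S/(\fr{p}+tS)\ra0\): since \(\Supp S/(\fr{p}+tS)\) consists of primes strictly containing \(\fr{p}\), the inductive hypothesis and half-exactness give \(F^{q+1}(S/(\fr{p}+tS))_{\fr{m}}=0\), hence multiplication by \(t\) is surjective on the finite \(R_{\fr{m}}\)-module \(F^{q+1}(S/\fr{p})_{\fr{m}}\); as \(h(t)\in\fr{m}\), Nakayama forces this module to vanish. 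Alternatively, simply cite \cite{ogu/ber:72} for the residue-field form of the criterion, which is how the paper uses Proposition \ref{prop.nakayama} throughout (e.g.\ in the proof of Proposition \ref{prop.main}).
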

\subsection{Reflexive modules}
Let \(A\) be a ring and \(M\) an \(A\)-module. The dual of \(M\) is the \(A\)-module \(\hm{}{A}{M}{A}\) which we denote by \(M^{*}\). There is a natural map \(\sigma_{M}\co M\ra M^{**}\) given by \(\sigma_{M}(m)=\ev_{m}\), the evaluation map. The module is called \emph{torsionless} if \(\sigma_{M}\) is injective and \emph{reflexive} if \(\sigma_{M}\) is an isomorphism; cf.\ \cite[1.4]{bru/her:98} for finite modules and noetherian rings. Suppose \(G\xra{d} F\ra M\ra 0\) is a projective presentation of \(M\).
Define \(D(M)\), the \emph{transpose of \(M\)}, to be the cokernel of the dual \(d^{*}\co F^{*}\ra G^{*}\). Put \(\Syz M=\im d\) and \(\Syz D(M)=\im d^{*}\).
\begin{lem}[cf.\ {\cite[1.4.21]{bru/her:98}}]\label{lem.D}
There are canonical isomorphisms 
\begin{equation*}
\ker\sigma_{M}\cong\xt{1}{A}{D(M)}{A}\quad \text{and}\quad \coker\sigma_{M}\cong\xt{2}{A}{D(M)}{A}\textup{.}
\end{equation*}
\end{lem}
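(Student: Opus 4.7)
The plan is to derive the isomorphisms from a careful dualisation of the presentation $G \xrightarrow{d} F \to M \to 0$, using that for finite free (or finite projective) $F, G$ the bidual is canonically the identity and higher Ext into $A$ vanishes. Dualising the presentation once gives an exact sequence
\begin{equation*}
0 \to M^{*} \to F^{*} \xrightarrow{d^{*}} G^{*} \to D(M) \to 0,
\end{equation*}
which I would break into the two short exact sequences
\begin{equation*}
(*)\co 0 \to M^{*} \to F^{*} \to \Syz D(M) \to 0, \qquad (**)\co 0 \to \Syz D(M) \to G^{*} \to D(M) \to 0,
\end{equation*}
where $\Syz D(M) = \im d^{*}$.

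Next I would dualise $(*)$ and $(**)$ and feed them into their long exact $\Ext$-sequences. Since $F^{*}$ and $G^{*}$ are finite projective, $\Ext^{i}_{A}(F^{*},A) = \Ext^{i}_{A}(G^{*},A) = 0$ for $i \geq 1$, so $(**)$ yields
\begin{equation*}
0 \to D(M)^{*} \to G \xrightarrow{\alpha} (\Syz D(M))^{*} \to \Ext^{1}_{A}(D(M),A) \to 0
\end{equation*}
together with the shift $\Ext^{i}(\Syz D(M),A) \cong \Ext^{i+1}(D(M),A)$ for $i \geq 1$, and $(*)$ yields
\begin{equation*}
0 \to (\Syz D(M))^{*} \xrightarrow{\beta} F \xrightarrow{\pi} M^{**} \to \Ext^{1}_{A}(\Syz D(M),A) \to 0,
\end{equation*}
where I have used the canonical identifications $F^{**} = F$, $G^{**} = G$. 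By naturality of biduality on finite projectives, $\beta \circ \alpha = d^{**} = d$, so $\im d = \beta(\im \alpha) \subseteq \im \beta = \ker \pi$; hence $\pi$ descends to a map $\bar\pi \co M \to M^{**}$.

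The key remaining step, and the only subtle one, is to identify $\bar\pi$ with $\sigma_{M}$. Unwinding the construction, for $p \in F$ and any $\phi \in M^{*} \subseteq F^{*}$ one has $\pi(p)(\phi) = \sigma_{F}(p)(\phi) = \phi(p)$; but $\phi \in M^{*}$ is exactly a functional on $F$ vanishing on $\im d$, i.e.\ a functional on $M = F/\im d$, and $\phi(p) = \phi([p])$. Hence $\bar\pi([p]) = \ev_{[p]} = \sigma_{M}([p])$ as desired. Granted this identification, $\ker \sigma_{M} = \ker \pi / \im d = \im\beta / \beta(\im \alpha) \cong (\Syz D(M))^{*}/\im \alpha \cong \Ext^{1}_{A}(D(M),A)$ (injectivity of $\beta$ is essential), and $\coker \sigma_{M} = \coker \pi \cong \Ext^{1}_{A}(\Syz D(M),A) \cong \Ext^{2}_{A}(D(M),A)$, which finishes the proof. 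The main obstacle is really bookkeeping: ensuring that the two factorisations of $d$ through $(\Syz D(M))^{*}$ coincide with its bidual and that the resulting map on the cokernel is literally the evaluation map—once these are in place, the statement drops out of a short diagram chase.
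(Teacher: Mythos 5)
Your proposal is correct and follows essentially the same route as the paper: dualise the presentation, split the four-term sequence at $\Syz D(M)=\im d^{*}$, dualise again, and identify the induced map $F\ra M^{**}$ (your $\pi$, the paper's $\psi$) with $\sigma_{M}$ after quotienting by $\im d$. The paper merely packages the bookkeeping you describe into a single two-row commutative diagram, with your $\im d=\beta(\im\alpha)\sbeq\im\beta$ appearing as the inclusion $\rho\co\Syz M\hra(\Syz D(M))^{*}$, so the two arguments coincide step for step.
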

\begin{proof}
The projective presentation induces a commutative diagram with exact rows
\begin{equation}\label{eq.D}
\xymatrix@-0pt@C-0pt@R-8pt@H-0pt{
0\ar[r] & (\syz{}{}(DM))^{*} \ar[r] & F\ar[r]^(0.45){\psi} & M^{**}\!\!\!\!\!\! \\
0\ar[r] & \syz{}{}M \ar[r]\ar[u]^(0.45){\rho} & F\ar[r]\ar@{=}[u] & M\ar[r]\ar[u]_{\sigma_{M}} & 0
}
\end{equation}
where \(\rho\) is the inclusion \(\im d\sbeq(\im d^{*})^{*}=(\syz{}{}(DM))^{*}\). Then \(\coker \sigma_{M}\cong\coker\psi\linebreak[2]\cong\xt{1}{A}{\syz{}{}(DM)}{A}\cong \xt{2}{A}{D(M)}{A}\) and \(\ker \sigma_{M}\cong \coker\rho\cong \xt{1}{A}{D(M)}{A}\).
\end{proof}
Assume \(A\) is a coherent ring and recall the \emph{stable category} \(\ucoh_{A}\) defined by A.\ Heller in \cite{hel:60}. It has the same objects as \(\cat{coh}_{A}\) and morphisms \(\uhm{}{A}{M}{N}\) given as the quotient \(\hm{}{A}{M}{N}/{\sim}\) of the \(A\)-linear homomorphisms by the following equivalence relation: Maps \(f\) and \(g\) in \(\hm{}{A}{M}{N}\) are stably equivalent if \(f{-}g\) factors through a coherent projective \(A\)-module. Subcategories of the stable category will appear in connection with the approximation results in Section \ref{sec.approx}.

In the stable category the syzygy can be made into a functor as follows. Fix a projective presentation \(G\xra{d} F\ra M\ra 0\) for each coherent \(A\)-module \(M\), i.e.\ a presentation where \(G\) and \(F\) are coherent and projective \(A\)-modules. The (first) \emph{syzygy module} \(\syz{A}{}M\) of \(M\) is defined to be \(\im d\). Inductively define \(\syz{A}{n}M=\syz{A}{}(\syz{A}{n-1}M)\) and put \(\syz{A}{0}M=M\). 
As the syzygy only depends on the choices made up to stable equivalence \(\syz{A}{}\) induces an endo-functor on \(\ucoh_{A}\). One has \(\uhm{}{A}{\syz{A}{n}M}{N}\cong\xt{n}{A}{M}{N}\) for all \(n>0\). 
Similarly \(D\) defines a duality on \(\ucoh_{A}\).  In the following we will allow \(D(M)\) and \(\syz{A}{n}(M)\) to denote any representative for the corresponding stable isomorphism class. 

Given a short exact sequence \(\xi\co 0\ra M_{1}\ra M_{2}\ra M_{3}\ra 0\) in \(\cat{coh}_{A}\) there is an exact sequence of projective presentations of \(\xi\). After dualisation the snake lemma gives an exact sequence in \(\cat{coh}_{A}\):
\begin{equation}\label{eq.exD}
0\ra M_{3}^{*}\ra M_{2}^{*}\ra M_{1}^{*}\ra D(M_{3})\ra D(M_{2})\ra D(M_{1})\ra 0
\end{equation}
\begin{lem}\label{lem.Dref}
Suppose \(A\) is a coherent ring and \(\xi\co  0\ra M_{1}\ra M_{2}\ra M_{3}\ra 0\) is a short exact sequence of coherent \(A\)-modules\textup{.} Assume that the natural map \(\xt{1}{A}{M_{3}}{A}\ra \xt{1}{A}{M_{2}}{A}\) is injective\textup{.} Then \(M_{1}^{*}\ra D(M_{3})\) in \eqref{eq.exD} is the zero map and the short exact sequence \(D(\xi)\) induces the exact sequence
\begin{align*}\label{eq.Dseq}
0\lra & \,\xt{1}{A}{D(M_{1})}{A}\lra\xt{1}{A}{D(M_{2})}{A}\lra\xt{1}{A}{D(M_{3})}{A}\lra \\
& \,\xt{2}{A}{D(M_{1})}{A}\lra\xt{2}{A}{D(M_{2})}{A}\lra\xt{2}{A}{D(M_{3})}{A}\,.
\end{align*}
The last map is surjective if \(\xt{1}{A}{M_{1}^{*}}{A}\ra\xt{1}{A}{M_{2}^{*}}{A}\) is injective\textup{.}
\end{lem}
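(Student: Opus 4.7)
The plan is to identify the claimed six-term sequence with the snake-lemma sequence for the double-dualization ladder of \(\xi\), after applying Lemma \ref{lem.D}. First I dualise \(\xi\) once and consider the long exact sequence
\begin{equation*}
0\lra M_{3}^{*}\lra M_{2}^{*}\lra M_{1}^{*}\lra\xt{1}{A}{M_{3}}{A}\lra\xt{1}{A}{M_{2}}{A}\,.
\end{equation*}
The injectivity hypothesis forces \(M_{2}^{*}\to M_{1}^{*}\) to be surjective, so \(0\to M_{3}^{*}\to M_{2}^{*}\to M_{1}^{*}\to 0\) is short exact. This shows at once that the connecting map \(M_{1}^{*}\to D(M_{3})\) in \eqref{eq.exD} vanishes, and hence \eqref{eq.exD} breaks into two short exact sequences, one of which is \(D(\xi)\co 0\to D(M_{3})\to D(M_{2})\to D(M_{1})\to 0\).

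Next I dualise \(0\to M_{3}^{*}\to M_{2}^{*}\to M_{1}^{*}\to 0\) once more to obtain an exact sequence \(0\to M_{1}^{**}\to M_{2}^{**}\to M_{3}^{**}\to\xt{1}{A}{M_{1}^{*}}{A}\to\cdots\). Placed beneath \(\xi\) with vertical arrows the evaluation maps \(\sigma_{M_{i}}\), this forms a commutative ladder by naturality of \(\sigma\). The snake lemma, with exact top row and bottom row exact except possibly at \(M_{3}^{**}\), yields
\begin{equation*}
0\lra\ker\sigma_{M_{1}}\lra\ker\sigma_{M_{2}}\lra\ker\sigma_{M_{3}}\lra\coker\sigma_{M_{1}}\lra\coker\sigma_{M_{2}}\lra\coker\sigma_{M_{3}}\,.
\end{equation*}
Translating through the isomorphisms \(\ker\sigma_{M_{i}}\cong\xt{1}{A}{D(M_{i})}{A}\) and \(\coker\sigma_{M_{i}}\cong\xt{2}{A}{D(M_{i})}{A}\) of Lemma \ref{lem.D} gives the claimed sequence. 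If in addition \(\xt{1}{A}{M_{1}^{*}}{A}\to\xt{1}{A}{M_{2}^{*}}{A}\) is injective, then the bottom row extends to a short exact sequence \(0\to M_{1}^{**}\to M_{2}^{**}\to M_{3}^{**}\to 0\), and the standard snake lemma yields surjectivity of the rightmost map.

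The only point requiring some care is the compatibility of the snake-lemma connecting morphism \(\ker\sigma_{M_{3}}\to\coker\sigma_{M_{1}}\) with the \(\Ext\)-connecting map induced by \(D(\xi)\) under the isomorphisms of Lemma \ref{lem.D}; this is what legitimises saying that \(D(\xi)\) \emph{induces} the sequence. I would secure this by choosing, via the horseshoe lemma, a short exact sequence of projective presentations of \(\xi\). Dualising this yields compatible projective presentations of \(D(M_{1})\), \(D(M_{2})\) and \(D(M_{3})\), and both connecting maps can then be read off from a single diagram by a routine chase, matching up under the explicit isomorphisms from the proof of Lemma \ref{lem.D}.
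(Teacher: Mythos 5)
Your proposal is correct and follows essentially the same route as the paper: use the injectivity hypothesis to kill the connecting map \(M_{1}^{*}\to\xt{1}{A}{M_{3}}{A}\) (hence \(\xi^{*}\) and \(D(\xi)\) are short exact), form the ladder \(\sigma_{\xi}\co\xi\ra\xi^{**}\), apply the snake lemma, and translate via Lemma \ref{lem.D}; the second hypothesis makes \(\xi^{**}\) short exact and gives the final surjectivity. Your extra care about identifying the snake-lemma connecting map with the \(\Ext\)-connecting map of \(D(\xi)\) via a horseshoe-lemma choice of presentations is exactly the naturality the paper asserts without detail.
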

\begin{proof}
The connecting map \(M_{1}^{*}\ra D(M_{3})\) factors through the connecting map \(M_{1}^{*}\ra\xt{1}{A}{M_{3}}{A}\). Since \(\xt{1}{A}{M_{3}}{A}\ra D(M_{3})\) is injective \(D(\xi)\) is a short exact sequence if and only if the connecting \(M_{1}^{*}\ra\xt{1}{A}{M_{3}}{A}\) is the \(0\)-map, which is true by assumption. Moreover, the dual of \(\xi\) gives a short exact sequence \(\xi^{*}\). Dualising once more gives a left exact sequence \(\xi^{**}\) and a natural map of complexes \(\sigma_{\xi}\co \xi\ra\xi^{**}\). Then the \(6\) terms from the long exact sequence of \(\hm{}{A}{-}{A}\) applied to \(D(\xi)\) is naturally identified with the exact sequence obtained from the snake lemma of \(\sigma_{\xi}\).
\end{proof}
\begin{ex}\label{ex.G}
Let \(A\) be a Cohen-Macaulay ring (in particular noetherian) with a canonical module \(\omega_{A}\); cf.\ \cite[3.3.16]{bru/her:98}. For an \(A\)-module \(M\) let \(M^{v}\) denote the \(A\)-module \(\hm{}{A}{M}{\omega_{A}}\). By local duality theory the evaluation map \(M\ra M^{vv}\) is an isomorphism if \(M\) is a (finite) maximal Cohen-Macaulay (MCM) module. If \(A\) is Gorenstein then \(A\) is a canonical module for \(A\) and hence MCM \(A\)-modules are reflexive. 
\end{ex}
\section{$n$-stably reflexive modules}
We define an \(n\)-stably reflexive module with respect to a flat ring homomorphism. Knudsen's definition of a stably reflexive module in Theorem 2 of \cite[Appendix]{knu:83a} implies ours. Proposition \ref{prop.main} gives the converse.
\begin{defn}\label{def.nstab}
Let \(A\) be a ring and \(n\) a positive integer. An \(A\)-module \(M\) is \emph{\(n\)-stably reflexive} if \(M\) is reflexive and \(\xt{i}{A}{M}{A}=0=\xt{i}{A}{M^{*}}{A}\) for all \(0<i<n\). If \(M\) is \(n\)-stably reflexive for all \(n\) it is called a \emph{stably reflexive} module.
\end{defn}
A \(1\)-stably reflexive module is the same as a reflexive module. If \(M\) is reflexive so is \(M^{*}\).
Hence \(M^{*}\) is \(n\)-stably reflexive if \(M\) is. 
\begin{rem}\label{rem.Gdim}
Auslander and M.\ Bridger introduced \emph{Gorenstein dimension} in \cite{aus/bri:69}. For noetherian rings and finite modules what we here call a stably reflexive module is the same as a module of Gorenstein dimension \(0\); see \cite[3.8]{aus/bri:69}.
\end{rem}
\begin{ex}\label{ex.Gor}
As noted in Example \ref{ex.G} if \(A\) is Gorenstein (in particular noetherian) then a finite MCM \(A\)-module \(M\) is reflexive. By local duality theory \(\xt{i}{A}{M}{A}=0\) for all \(i>0\). Since \(M^{*}\) also is MCM, \(M\) is stably reflexive as an \(A\)-module. The converse is also true: Localising at a prime ideal \(\fr{p}\) in \( A\), Grothendieck's local duality theorem with \(\omega_{A_{\fr{p}}}= A_{\fr{p}}\) and \(\fr{m}=\fr{p}A_{\fr{p}}\) gives 
\begin{equation}\label{eq.ldt}
\cH^{i}_{\fr{m}}(M_{\fr{p}})\cong \hm{}{A_{\fr{p}}}{\xt{n{-}i}{A_{\fr{p}}}{M_{\fr{p}}}{A_{\fr{p}}}}{E(k)}\text{ for all } i 
\end{equation}
where \(E(k)\) is the injective hull of the residue field \(k\) and \(n=\dim A_{\fr{p}}\) (\cite[3.5.9]{bru/her:98}). Hence \(\cH^{i}_{\fr{m}}(M_{\fr{p}})=0\) for \(i<\dim A_{\fr{p}}\) for all prime ideals \(\fr{p}\) in \( A\) and \( M\) is a maximal Cohen-Macaulay \(A\)-module.
\end{ex}
We are now going to define a relative version of the above notion.
\begin{defn}\label{def.relnstab}
Let \(n\) be a positive integer. Suppose \(h\co S\ra R\) is a flat ring homomorphism and \(M\) an \(R\)-module. Then \(M\) is \emph{\(n\)-stably reflexive with respect to \(h\)} if \(M\) is \(S\)-flat and the fibre \(M_{s}=M\ot_{S}k(s)\) is \(n\)-stably reflexive as an \(R_{s}\)-module for all \(s\in\im\{\mSpec R\ra\Spec S\}\). The module \(M\) is \emph{stably reflexive} with respect to \(h\) if it is \(n\)-stably reflexive with respect to \(h\) for all \(n\). We will also say that \(M\) is \(n\)-stably (stably) reflexive \emph{over \(S\)}.
\end{defn}
\begin{prop}\label{prop.main}
Let \(h\co S\ra R\) be a flat ring homomorphism and \(M\) an \(S\)-flat coherent \(R\)-module\textup{.} Suppose \(S\) is noetherian\textup{,} \(R\) is coherent and \(n>1\)\textup{.} Assume the module \(M\) is \(n\)-stably reflexive with respect to \(h\)\textup{.}
\begin{enumerate}
\item[(i)] For any ring homomorphism \(S\ra S'\) the base change \(M'=M\ot_{S}S'\) is \(n\)-stably reflexive with respect to \(h'=h\ot_{S}S'\) and as \(R'=R\ot_{S}S'\)-module\textup{.}
\item[(ii)] The dual \(M^{*}=\hm{}{R}{M}{R}\) is \(n\)-stably reflexive with respect to \(h\)\textup{.}
\item[(iii)] For any ring homomorphism \(S\ra S'\) and \(S'\)-module \(I'\) the base change map \(M^{*}\ot_{S}I'\ra\hm{}{R'}{M'}{R'\ot_{S'}I'}\) is an isomorphism and 
\begin{equation*}
\xt{i}{R'}{M'}{R'\ot_{S'}I'}=0\quad \text{for all}\quad 0<i<n\textup{.}
\end{equation*}
\end{enumerate}
\end{prop}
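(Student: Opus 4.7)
The plan is to establish (i) by flat base change of fibres, then derive the first assertion of (iii) and deduce both $S$-flatness of $M^{*}$ and (ii) from Corollary \ref{cor.xtdef}(i) with $q=0$, and finally obtain the $\Ext$ vanishing in (iii) via a combination of Corollary \ref{cor.xtdef}(i), Nakayama, and a syzygy reduction. For (i), I would observe that $M' = M \ot_{S} S'$ is $S'$-flat, and that each fibre $M'_{s'} \cong M_{s} \ot_{k(s)} k(s')$ is the flat base change along a field extension of the $n$-stably reflexive $R_{s}$-module $M_{s}$; since reflexivity and the relevant $\Ext$ vanishings commute with flat base change over fields, $M'_{s'}$ is $n$-stably reflexive over $R'_{s'}$.

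For the base change isomorphism of $\Hom$ in (iii), Corollary \ref{cor.xtdef}(i) with $N = R$ and $q = 0$ applies since $n \geq 2$ forces $\xt{1}{R_{s}}{M_{s}}{R_{s}} = 0$, yielding that $c^{0}_{I'}$ is an isomorphism for all $S'$-modules $I'$. Specialising to $S' = S$ and using $S$-flatness of $R$ with left-exactness of $\hm{}{R}{M}{-}$, an injection $I_{1} \hra I_{2}$ of $S$-modules gives $R \ot I_{1} \hra R \ot I_{2}$ and hence, via the iso $c^{0}$, $M^{*} \ot I_{1} \hra M^{*} \ot I_{2}$. This proves $\tor{S}{1}{M^{*}}{-} = 0$, so $M^{*}$ is $S$-flat. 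Taking $I' = k(s)$ identifies $(M^{*})_{s}$ with $M_{s}^{*}$, and since Definition \ref{def.nstab} is symmetric in $M$ and $M^{*}$ on fibres (using $M_{s}^{**} = M_{s}$), $M^{*}$ is $n$-stably reflexive over $S$, proving (ii).

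For the $\Ext$ vanishing in (iii), the spectral sequence \eqref{eq.ss} and $S$-flatness of $M$ reduce the claim to $\xt{i}{R}{M}{R \ot_{S} I} = 0$ for all $S$-modules $I$ and $0 < i < n$. In the range $0 < i \leq n - 2$, Corollary \ref{cor.xtdef}(i) with $q = i$ provides a base change iso $\xt{i}{R}{M}{R \ot I} \cong \xt{i}{R}{M}{R} \ot_{S} I$, reducing the matter to $\xt{i}{R}{M}{R} = 0$. This follows by Nakayama applied to the coherent $R$-module $\xt{i}{R}{M}{R}$: the iso $c^{i}_{k(s)}$ gives $\xt{i}{R}{M}{R} \ot_{R} R_{s} \cong \xt{i}{R_{s}}{M_{s}}{R_{s}} = 0$, and every residue field $R/\fr{m}$ with $\fr{m} \in \mSpec R$ is a quotient of $R_{s}$ for $s = h^{-1}(\fr{m}) \in Z$, so every stalk $\xt{i}{R}{M}{R}_{\fr{m}}$ vanishes.

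The main obstacle is the boundary case $i = n - 1$, for which Corollary \ref{cor.xtdef}(i) is not applicable since it would require the non-hypothesised $\xt{n}{R_{s}}{M_{s}}{R_{s}} = 0$. I would reduce to smaller index via the first syzygy $K = \syz{R}{}M$: since $M$ is $S$-flat and coherent, so is $K$ with $K_{s} \cong \syz{R_{s}}{}M_{s}$, and the long exact $\Ext$ sequences of $0 \ra K_{s} \ra F_{s} \ra M_{s} \ra 0$ and its $R_{s}$-dual — using both $\xt{1}{R_{s}}{M_{s}}{R_{s}} = 0$ and $\xt{1}{R_{s}}{M_{s}^{*}}{R_{s}} = 0$ together with $M_{s}^{**} = M_{s}$ — show that $K_{s}$ is $(n{-}1)$-stably reflexive, so $K$ is $(n{-}1)$-stably reflexive with respect to $h$. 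An induction on $n$ then gives $\xt{n-2}{R}{K}{R \ot I} = 0$, and the long exact $\Ext$ sequence of $0 \ra K \ra F \ra M \ra 0$ (with $\xt{i}{R}{F}{-} = 0$ for $i \geq 1$ since $F$ is free) produces the desired $\xt{n-1}{R}{M}{R \ot I} = 0$. The induction base case $n = 2$, where $K_{s}$ is merely reflexive and not covered by the theorem's hypothesis $n > 1$, requires a separate argument: there $M \cong M^{**}$ over $R$ (by Nakayama on ker and coker of $\sigma_{M}$, both coherent with vanishing fibres), and Proposition \ref{prop.nakayama}(ii) applied with $q = 1$ deduces $S$-flatness, and hence vanishing, of $\xt{1}{R}{M}{R}$ from surjectivity of $e^{1}_{S/\fr{n}}$ obtained using the dual hypothesis $\xt{1}{R_{s}}{M_{s}^{*}}{R_{s}} = 0$.
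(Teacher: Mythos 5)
Your overall strategy (Ogus--Bergman base change plus Nakayama) is the right one, and parts (ii) and the first half of (iii) are essentially the paper's argument, but part (i) has a genuine gap. You reduce the fibre condition at $s'\in Z'=\im\{\mSpec R'\ra\Spec S'\}$ to the fibre condition at its image $s\in\Spec S$ via $M'_{s'}\cong M_{s}\ot_{k(s)}k(s')$. However, the hypothesis only provides that $M_{s}$ is $n$-stably reflexive for $s\in Z=\im\{\mSpec R\ra\Spec S\}$, and the image of a point of $Z'$ need not lie in $Z$ (already for $h=\id\co\BB{Z}\ra\BB{Z}$ and $S'=\BB{Q}$ the unique point of $Z'$ maps to the generic point of $\Spec\BB{Z}$; the same happens for $S$ local and $S'$ its fraction field). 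This is precisely why the paper's proof establishes the conclusions (reflexivity and $\Ext$-vanishing of $M\ot_{S}S''$ over $R\ot_{S}S''$) for an \emph{arbitrary} $S$-algebra $S''$ and only then specialises to $S''=k(s')$: the transport from $Z$ to everywhere happens via Nakayama over $R$ (every maximal ideal of $R$ lies over $Z$ by definition), not over $S$. Relatedly, you never address the second half of (i), that $M'$ is $n$-stably reflexive \emph{as an $R'$-module}; in particular reflexivity of $M'$ itself is nowhere proved. The paper gets this from Lemma \ref{lem.D}, identifying $\ker\sigma_{M'}$ and $\coker\sigma_{M'}$ with $\xt{1}{R'}{D(M')}{R'}$ and $\xt{2}{R'}{D(M')}{R'}$ and killing these by base change from the fibres; your only reflexivity argument ($M\cong M^{**}$, inside the $n=2$ discussion) is not carried out over $R'$.

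Secondly, your treatment of the boundary case $i=n-1$ of the $\Ext$-vanishing is an unnecessary detour with a shaky base case. For mere vanishing of $\xt{q}{R}{M}{R\ot_{S}I}$ one does not need the base change isomorphism of Corollary \ref{cor.xtdef} (i), hence no hypothesis on $\xt{q+1}{R_{s}}{M_{s}}{R_{s}}$: since $F^{q}(k(s))\cong\xt{q}{R_{s}}{M_{s}}{R_{s}}=0$ for all $s\in Z$, the exchange map is surjective onto zero, Proposition \ref{prop.nakayama} (i) makes $e^{q}_{I}$ an isomorphism for all $I$, and Nakayama over $R$ gives $F^{q}(S)=0$ and hence $F^{q}=0$. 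This is the Remark following Proposition \ref{prop.nakayama}, it covers all $0<q<n$ uniformly, and it is what the paper uses. Your syzygy induction for $n\geq 3$ is correct as far as it goes (it is essentially Lemma \ref{lem.nstabex2} (ii)), but the $n=2$ base case as written is confused: surjectivity of $e^{1}$ for the functor $I\mapsto\xt{1}{R}{M}{R\ot_{S}I}$ follows from $\xt{1}{R_{s}}{M_{s}}{R_{s}}=0$, not from the dual hypothesis on $M_{s}^{*}$, and $S$-flatness of $\xt{1}{R}{M}{R}$ does not by itself give vanishing --- you still need vanishing of its closed fibres together with Nakayama over $R$.
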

\begin{proof}
To prove that \(M'\) is reflexive we first investigate the base change properties of \(D(M)\). Choose a projective presentation \(G_{1}\ra G_{0}\ra M\ra 0\) of \(M\). Let \(I\) denote an \(S\)-module. Define the cohomological \(\delta\)-functor \(\{F^{q}\}\) by \(F^{q}(I)=\cH^{q}((G_{0}^{*}\ra G_{1}^{*})\ot_{S}I)\) as in Example \ref{ex.nakcplx}. Note that \(F^{1}(S)=D(M)\). Since the \(G_{i}\) are coherent and projective, \(G_{i}^{*}\ot_{S}I\cong \hm{}{R}{G_{i}}{R\ot_{S}I}\) and it follows that \(F^{0}(I)\cong \hm{}{R}{M}{R\ot_{S}I}\). By assumption \(n>1\) and \(\xt{1}{R_{s}}{M_{s}}{R_{s}}=0\) for all \(s\in Z:=\im\{\mSpec R\ra\Spec S\}\). Since \(M\) is \(S\)-flat Corollary \ref{cor.xtdef} implies that the base change maps for \(F^{0}\) are isomorphisms, which is the first part of statement (iii). Since \(F^{2}=0\) the exchange maps for \(F^{1}\) are isomorphisms by Proposition \ref{prop.nakayama} (ii), in particular \(D(M)\ot_{S}S'\cong D(M')\). Moreover, \(D(M)\) is \(S\)-flat by Proposition \ref{prop.nakayama} (ii). Then the base change map \(c^{i}_{I'}\co \xt{i}{R}{D(M)}{R}\ot_{S}I'\ra \xt{i}{R'}{D(M')}{R'\ot_{S'}I'}\) is well defined. Since \(M_{s}\) is reflexive for \(s\in Z\), Lemma \ref{lem.D} implies that \(\xt{i}{R_{s}}{D(M_{s})}{R_{s}}=0\) for \(i=1,2\). By Proposition \ref{prop.nakayama} (i) this implies that \(\xt{i}{R'}{D(M')}{R'\ot_{S'}I'}=0\) for \(i=1,2\). In particular \(M'\) is reflexive as \(R'\)-module. Let \(s'\) be an element in \(Z':=\im\{\mSpec R'\ra\Spec S'\}\). The above conclusion for the composed ring homomorphism \(S\ra S'\ra k(s')\) says that \(M'_{s'}\) is reflexive as \(R'_{s'}\)-module. As \(M'\) is \(S'\)-flat it follows that \(M'\) is \(1\)-reflexive with respect to \(h'\).

Since \(\xt{i}{R_{s}}{M_{s}}{R_{s}}=0\) for all \(0<i<n\) and \(s\in Z\), Proposition \ref{prop.nakayama} (i) implies that \(\xt{i}{R}{M}{R}=0\) and that the base change map
\begin{equation}
c^{i}_{I'}(M)\co \xt{i}{R}{M}{R}\ot_{S}I'\lra\xt{i}{R'}{M'}{R'\ot_{S'} I'}
\end{equation}
is an isomorphism for all \(0<i<n\). In particular \(\xt{i}{R'}{M'}{R'\ot_{S'}I'}=0\) for \(0<i<n\) which gives the second part of (iii). The argument applied to the composition \(S\ra S'\ra k(s')\) for some \(s'\in Z'\) gives \(\xt{i}{{R'\!\!}_{s'}}{M_{s'}'}{R_{s'}'}=0\) for \(0<i<n\). 

As proved above \(M^{*}\) is \(S\)-flat and \(M^{*}\ot_{S}I'\linebreak[1]\cong \hm{}{R'}{M'}{R'\ot_{S'}I'}\). In particular \(M^{*}\ot_{S}S'\cong (M')^{*}\). The base change map
\begin{equation}\label{eq.bcS}
c^{i}_{I'}(M^{*})\co \xt{i}{R}{M^{*}}{R}\ot_{S}I'\lra\xt{i}{R'}{(M')^{*}}{R'\ot_{S'} I'}
\end{equation}
is therefore well-defined. Since \(\xt{i}{R_{s}}{(M_{s})^{*}}{R_{s}}=0\) for all \(0<i<n\) and \(s\in Z\) by assumption and \((M^{*})_{s}\cong (M_{s})^{*}\), Proposition \ref{prop.nakayama} (i) implies that \(\xt{i}{R}{M^{*}}{R}=0\) and that \(c^{i}_{I'}\) is an isomorphism for all \(0<i<n\). In particular \(\xt{i}{R'}{(M')^{*}}{R'}=0\) for all \(0<i<n\). Moreover, this conclusion for the composition \(S\ra S'\ra k(s')\) with \(s'\in Z'\) gives
\(\xt{i}{{R'\!\!}_{s'}}{({M'\!\!}_{s'})^{*}}{{R'\!}_{s'}}=0\) for \(0<i<n\). This concludes the proof of (i).

Now (ii) follows from the identification \((M^{*})_{s}\cong (M_{s})^{*}\) for all \(s\in Z\) proved above. Since \(M_{s}\) is reflexive, so is its dual \((M^{*})_{s}\). The vanishing of \(\xt{i}{R_{s}}{(M^{*})_{s}}{R_{s}}\) and of \(\xt{i}{R_{s}}{((M^{*})_{s})^{*}}{R_{s}}\) for \(0<i<n\) follows likewise from (i) and the assumptions, respectively.
\end{proof}
\begin{rem}\label{rem.main}
Knudsen gives three equivalent definitions of a stably reflexive module \(M\) with respect to a flat homomorphism of noetherian rings \(h:S\ra R\) in Theorem 2 of \cite[Appendix]{knu:83a}. We show that the first one is equivalent to our in Definition \ref{def.relnstab}.
\begin{defn*}[{\cite[Appendix]{knu:83a}}]
A finite \(R\)-module \(M\) is a stably reflexive module with respect to \(h\) if for all \(S\)-modules \(I\) 
\begin{enumerate}
\item[(a)] 
the exchange map \(\hm{}{R}{M}{R}\ot_{S}I\ra\hm{}{R}{M}{R\ot_{S}I}\) is an isomorphism,
\item[(b)] the composition
\begin{equation*}
M\ot_{S}I\xra{\sigma_{M}\ot\id_{I}}\hm{}{R}{M^{*}}{R}\ot_{S}I\ra\hm{}{R}{M^{*}}{R\ot_{S}I}
\end{equation*}
is an isomorphism, and
\item[(c)] \(\xt{i}{R}{M}{R\ot_{S}I}=0=\xt{i}{R}{M^{*}}{R\ot_{S}I}\) for all \(i>0\).
\end{enumerate}
\end{defn*}
Assume this. By (a) and Proposition \ref{prop.nakayama} (ii), \(M^{*}\) is \(S\)-flat. By (b), \(M\) is reflexive and the first map in (b) is an isomorphism. Hence the second map is an isomorphism. This implies by Proposition \ref{prop.nakayama} (ii) that \(M\) is \(S\)-flat. Pick \(s\in \im\{\mSpec R\ra\Spec S\}\) and put \(I=k(s)\).  Since \(M\) is \(S\)-flat the edge maps \(\xt{i}{R_{s}}{M_{s}}{R_{s}}\ra\xt{i}{R}{M}{R_{s}}\) in \eqref{eq.ss} are isomorphisms. By (c), \(\xt{i}{R_{s}}{M_{s}}{R_{s}}\linebreak[1]=0\) follows for all \(i>0\). By (a), \((M^{*})_{s}\) equals \((M_{s})^{*}\). Then the same argument gives \(\xt{i}{R_{s}}{(M_{s})^{*}}{R_{s}}=0\) for all \(i>0\). Moreover, (b) now gives \(M_{s}\cong\hm{}{R_{s}}{(M^{*})_{s}}{R_{s}}\cong (M_{s})^{**}\). Hence Knutsen's definition for \(M\) stably reflexive with respect to \(h\) implies our in Definition \ref{def.relnstab}. By Proposition \ref{prop.main} the reverse implication is true as well.
\end{rem}
\begin{lem}\label{lem.Gor}
Let \(h\co S\ra R\) be a flat homomorphism of noetherian rings with finite Krull dimension\textup{.} Let \(M\) be an \(S\)-flat finite \(R\)-module\textup{.} Suppose \(R_{s}\) is Gorenstein for all \(s\) in \(Z=\im\{\mSpec R\ra\Spec S\}\)\textup{.} Then the following statements are equivalent\textup{:}
\begin{enumerate}
\item[(i)] \(M\) is stably reflexive with respect to \(h\)\textup{.}
\item[(ii)] \(M_{s}\) is a maximal Cohen-Macaulay \(R_{s}\)-module for all \(s\in Z\)\textup{.}
\item[(iii)] \(\xt{i}{R_{s}}{M_{s}}{R_{s}}=0\) for all \(0<i\leq \dim R_{s}\) and \(s\in Z\)\textup{.}
\end{enumerate}
\end{lem}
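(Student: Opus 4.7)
The plan is to derive (i) $\Leftrightarrow$ (ii) directly from Definition \ref{def.relnstab} together with the absolute Gorenstein criterion established in Example \ref{ex.Gor}, and then to derive (ii) $\Leftrightarrow$ (iii) by applying local duality on each fibre, one prime at a time. The finite Krull dimension hypothesis will be used to supply a uniform bound that lets the global range in (iii) dominate the prime-dependent ranges demanded by local duality.

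For (i) $\Leftrightarrow$ (ii), I would simply unwind Definition \ref{def.relnstab}: condition (i) is the conjunction of $S$-flatness of $M$ (which is given) with the statement that every fibre $M_{s}$ is stably reflexive over the noetherian Gorenstein ring $R_{s}$. Example \ref{ex.Gor}, applied with $A=R_{s}$, identifies this latter condition with $M_{s}$ being maximal Cohen-Macaulay over $R_{s}$, so the equivalence follows by composing the two biconditionals.

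For (ii) $\Leftrightarrow$ (iii) I fix $s\in Z$ and work locally on $\Spec R_{s}$; both conditions localise at any prime $\fr{p}\sbeq R_{s}$. The localisation $R_{s,\fr{p}}$ is Gorenstein of dimension $d_{\fr{p}}=\dim R_{s,\fr{p}}\leq\dim R_{s}<\infty$, so $\Injdim R_{s,\fr{p}}=d_{\fr{p}}$. Local duality with $\omega_{R_{s,\fr{p}}}=R_{s,\fr{p}}$, exactly in the form recorded in \eqref{eq.ldt} of Example \ref{ex.Gor}, pairs $\cH^{j}_{\fr{p}R_{s,\fr{p}}}(M_{s,\fr{p}})$ with $\xt{d_{\fr{p}}-j}{R_{s,\fr{p}}}{M_{s,\fr{p}}}{R_{s,\fr{p}}}$, so $M_{s,\fr{p}}$ is MCM iff $\xt{i}{R_{s,\fr{p}}}{M_{s,\fr{p}}}{R_{s,\fr{p}}}=0$ for $0<i\leq d_{\fr{p}}$. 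In direction (ii) $\Rightarrow$ (iii), the finite injective dimension of $R_{s,\fr{p}}$ upgrades the MCM vanishing to vanishing of all positive Ext, which globalises to the conclusion of (iii). In direction (iii) $\Rightarrow$ (ii), the global hypothesis localises prime-by-prime, and since $d_{\fr{p}}\leq\dim R_{s}$ the range $0<i\leq\dim R_{s}$ in (iii) always covers the range $0<i\leq d_{\fr{p}}$ required to conclude MCM at $\fr{p}$.

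The main point to keep track of—and the only place where the argument is not completely formal—is precisely this interplay between the uniform global bound $\dim R_{s}$ that appears in (iii) and the prime-dependent bounds $d_{\fr{p}}$ that govern local duality; the finite Krull dimension hypothesis is exactly what ensures the former dominates the latter and makes the two formulations coincide.
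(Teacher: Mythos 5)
Your proposal is correct and follows essentially the same route as the paper: both reduce to the fibres, localise at primes of $R_{s}$, and use local duality over the Gorenstein localisation (via Example \ref{ex.Gor} and \eqref{eq.ldt}) to identify the MCM condition with Ext-vanishing in degrees up to the local dimension, with the finite Krull dimension ensuring the uniform bound $\dim R_{s}$ covers each $d_{\fr{p}}$. The only cosmetic difference is the bookkeeping: the paper extracts (i)$\Rightarrow$(ii) and (ii)$\Leftrightarrow$(iii) from one localisation argument and cites Example \ref{ex.Gor} only for (ii)$\Rightarrow$(i), whereas you package (i)$\Leftrightarrow$(ii) entirely through Example \ref{ex.Gor}.
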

\begin{proof}
Let \((R',M')\) denote the localisation of \((R_{s},M_{s})\) at a prime ideal \(\fr{p}\) in \(R_{s}\). Since \(R_{s}\) is noetherian and \(M_{s}\) is finite localising at \(\fr{p}\) commutes with \(\Ext\), i.e.\ \(\xt{i}{R_{s}}{M_{s}}{R_{s}}_{\fr{p}}\cong\xt{i}{R'}{M'}{R'}\). Since \(R'\) is a Gorenstein ring \(R'\) is a canonical \(R'\)-module. Hence vanishing of \(\xt{i}{R_{s}}{M_{s}}{R_{s}}\) for all \(i>0\) is equivalent to \(\depth M'=\dim R'\) for all \(\fr{p}\) which is equivalent to \(\xt{i}{R_{s}}{M_{s}}{R_{s}}=0\) for \(0<i\leq \dim R_{s}\); see \eqref{eq.ldt} and \cite[3.5.11]{bru/her:98}. This gives (i)\(\Ra\)(ii) and (ii)\(\lRa\)(iii). For (ii)\(\Ra\)(i) see Example \ref{ex.Gor}.
\end{proof}
The following lemma generalises Lemma and Corollary 3 in \cite[Appendix]{knu:83a}.
\begin{lem}\label{lem.nstabex2}
Let \(S\ra R\) be a flat ring homomorphism and \(\xi\co 0\ra{}^{1\!}M\ra {}^{2\!}M\ra {}^{3\!}M\ra 0\) a short exact sequence of coherent \(R\)-modules\textup{.} Suppose \(R\) is coherent and \(S\) is noetherian\textup{.}
\begin{enumerate}
\item[(i)] If \({}^{1\!}M\) and \({}^{3\!}M\) are \(n\)-stably reflexive over \(S\)\textup{,} so is \({}^{2\!}M\)\textup{.}
\item[(ii)] If \({}^{2\!}M\) and \({}^{3\!}M\) are \(n\)-stably reflexive over \(S\) and \(n>1\) then \({}^{1\!}M\) is \((n{-}1)\)-stably reflexive over \(S\)\textup{.}
\end{enumerate}
\end{lem}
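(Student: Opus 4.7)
The plan is to reduce both statements to the absolute case via fibrewise reasoning, then chase long exact sequences of $\Ext^{*}(-,A)$ applied to $\xi$ and its dual, using Lemmas \ref{lem.D} and \ref{lem.Dref} to pass between reflexivity and Ext vanishing. For the reduction I would first verify $S$-flatness of the unknown term: in (i) the Tor sequence for $\xi$ together with the $S$-flatness of ${}^{1\!}M,{}^{3\!}M$ gives $\Tor^{S}_{1}({}^{2\!}M,I)=0$; in (ii) the same sequence, combined with $\Tor^{S}_{2}({}^{3\!}M,I)=0$ and $\Tor^{S}_{1}({}^{2\!}M,I)=0$, yields $\Tor^{S}_{1}({}^{1\!}M,I)=0$. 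With all three terms of $\xi$ then $S$-flat, the sequence stays exact after base change to any fibre, and Definition \ref{def.relnstab} reduces the statements to the absolute analogues for a short exact sequence $0\ra N_{1}\ra N_{2}\ra N_{3}\ra 0$ of coherent modules over $A=R_{s}$, $s\in Z=\im\{\mSpec R\ra\Spec S\}$.

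For (i) in the absolute case, the long exact sequence of $\Ext^{*}(-,A)$ on $\xi$ gives $\xt{i}{A}{N_{2}}{A}=0$ for $0<i<n$ from the analogous vanishing on $N_{1}$ and $N_{3}$. For $n\geq 2$ the Ext injectivity hypothesis of Lemma \ref{lem.Dref} is trivial, so the dual $\xi^{*}\co 0\ra N_{3}^{*}\ra N_{2}^{*}\ra N_{1}^{*}\ra 0$ is short exact; the long exact sequence on $\xi^{*}$ then gives $\xt{i}{A}{N_{2}^{*}}{A}=0$ in the same range. Reflexivity of $N_{2}$ follows from the six-term exact sequence
\begin{equation*}
0\ra\ker\sigma_{N_{1}}\ra\ker\sigma_{N_{2}}\ra\ker\sigma_{N_{3}}\ra\coker\sigma_{N_{1}}\ra\coker\sigma_{N_{2}}\ra\coker\sigma_{N_{3}}
\end{equation*}
produced by Lemmas \ref{lem.D} and \ref{lem.Dref}, whose outer terms vanish by the reflexivity of $N_{1}$ and $N_{3}$.

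For (ii) in the absolute case, $n>1$ again makes $\xi^{*}$ short exact, and the long exact sequence of $\Ext^{*}(-,A)$ on $\xi$ yields $\xt{i}{A}{N_{1}}{A}=0$ for $0<i<n-1$. Dualising $\xi^{*}$ gives an exact sequence
\begin{equation*}
0\ra N_{1}^{**}\ra N_{2}^{**}\ra N_{3}^{**}\ra\xt{1}{A}{N_{1}^{*}}{A}\ra 0.
\end{equation*}
Identifying $N_{j}^{**}\cong N_{j}$ for $j=2,3$ via reflexivity makes $N_{2}^{**}\ra N_{3}^{**}$ surjective, so $\xt{1}{A}{N_{1}^{*}}{A}=0$; the vanishing $\xt{i}{A}{N_{1}^{*}}{A}=0$ for $2\leq i<n-1$ follows from the long exact sequence on $\xi^{*}$. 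Reflexivity of $N_{1}$ is then obtained from the five-lemma applied to $\xi$ and the now short exact $\xi^{**}$.

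The main obstacle I anticipate is keeping track of the Ext injectivity hypothesis of Lemma \ref{lem.Dref} through the cycle $\xi\mapsto\xi^{*}\mapsto\xi^{**}$; this is exactly what accounts for the index loss $n\mapsto n-1$ in (ii). The case $n=1$ of (i) appears slightly more delicate, since $\xt{1}{A}{N_{3}}{A}$ need no longer vanish, so one cannot shortcut to short exactness of $\xi^{*}$ by the same route and must argue reflexivity of $N_{2}$ more directly via the transpose and Lemma \ref{lem.D}.
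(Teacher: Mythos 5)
Your reduction to the closed fibres (the Tor argument giving \(S\)-flatness of the unknown term) and your treatment of (ii) coincide with the paper's own proof: it too uses \(\xt{1}{A}{N_{3}}{A}=0\) to make \(\xi^{*}\) short exact via Lemmas \ref{lem.D} and \ref{lem.Dref}, deduces short exactness of \(\xi^{**}\) from reflexivity of \(N_{2}\) and \(N_{3}\), kills \(\xt{1}{A}{N_{1}^{*}}{A}\) exactly as you do, and finishes with the two long exact sequences. (One nit: your four-term sequence ending in \(\xt{1}{A}{N_{1}^{*}}{A}\ra 0\) is exact as written only because the next term \(\xt{1}{A}{N_{2}^{*}}{A}\) vanishes; that deserves a word.) For part (i) the paper writes ``left to the reader'', and your argument is a correct way to do it for \(n\geq 2\).

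The loose end you flag at \(n=1\) in (i) is not merely delicate: the statement fails there, so the ``more direct argument via the transpose'' you hope for does not exist. Take \(A=k[[t^{3},t^{4},t^{5}]]\) (and \(S=k\)). Both \(A\) and \(\fr{m}=\fr{m}_{A}\) are reflexive (\(\fr{m}^{*}=A:\fr{m}=k[[t]]\) and \(\fr{m}^{**}=A:k[[t]]=\fr{m}\)), while \(\xt{1}{A}{\fr{m}}{A}\cong\xt{2}{A}{k}{A}\neq 0\) because \(A\) is not Gorenstein. Choose \(0\neq e\in\xt{1}{A}{\fr{m}}{A}\) and let \(0\ra A\ra E\ra\fr{m}\ra 0\) be the corresponding extension. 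Since \(\xt{1}{A}{\fr{m}}{A}\) has finite length, \(I=\ann_{A}(e)\) is a nonzero proper ideal; dualising gives the short exact sequence \(0\ra\fr{m}^{*}\ra E^{*}\ra I\ra 0\), and the snake lemma applied to \(\sigma\co\xi\ra\xi^{**}\) yields \(\ker\sigma_{E}=0\) and \(\coker\sigma_{E}\cong\hm{}{A}{I}{A}/A\cong\xt{1}{A}{A/I}{A}\neq 0\). So \(E\) is torsionless but not reflexive, even though both ends are reflexive. (This is exactly where the injectivity hypothesis of Lemma \ref{lem.Dref} breaks: the connecting map \(A^{*}\ra\xt{1}{A}{\fr{m}}{A}\), \(a\mapsto ae\), is nonzero.) Part (i) should therefore be restricted to \(n\geq 2\) — which is all the paper ever uses, and which covers the stably reflexive case — and with that restriction your proposal proves everything that is actually true.
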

\begin{proof}
In (i) and (ii) all three modules are \(S\)-flat. Let \(s\in\im\{\mSpec R\ra\Spec S\}\)\textup{.} As \(A=R_{s}\) is obtained by first localising and then dividing by a coherent ideal, it is a coherent ring and similarly for the modules. For (ii), since \(\xt{1}{A}{{}^{3\!}M_{s}}{A}=0\) Lemmas \ref{lem.D} and \ref{lem.Dref} give that \({}^{1\!}M_{s}\) is reflexive and the dual sequence \(\xi_{s}^{*}\) is short exact. Since \({}^{3\!}M_{s}\) is reflexive the double dual \(\xi_{s}^{**}\) is short exact too (consider \(\xi_{s}\ra\xi_{s}^{**}\)). In particular \(\xt{1}{A}{{}^{1\!}M_{s}^{*}}{A}=0\) as \(\xt{1}{A}{{}^{2\!}M_{s}^{*}}{A}=0\). The rest of the statement now follows from the long exact sequences. (i) is left to the reader.
\end{proof}
\section{$n$-stably reflexive complexes}
We define an \(n\)-stably reflexive complex with respect to a flat ring homomorphism \(S\ra R\). In the coherent case with \(n>1\) it is shown that this notion induces an \(n\)-stably reflexive module \(M\) and conversely, given \(M\), that such a complex exists. We also show that \(M\) is an \((n{+}1)\)-syzygy of an \(S\)-flat finite \(R\)-module with a one-sided cohomology condition. This too has a converse. 
\begin{defn}\label{def.refcplx}
Let \(n\) be a positive integer and \(A\) a ring.
\begin{enumerate}
\item[(i)] Let \(E\co  \dots\ra E^{-1}\xra{d^{-1}_{E}} E^{0}\xra{d^{0}_{E}} E^{1}\xra{d^{1}_{E}}\dots\) be a complex of finite projective \(A\)-modules. Let \(E^{\vee}\) denote the (non-standard) \emph{dual complex} where \((E^{\vee})^{i}=(E^{1{-}i})^{*}\) and the differential \(d^{i}_{E^{\vee}}\co (E^{\vee})^{i}\ra (E^{\vee})^{i+1}\) equals \((d^{-i}_{E})^{*}\co (E^{1-i})^{*}\ra (E^{-i})^{*}\) for all \(i\). If \(M\cong\coker d^{-1}_{E}\) then \(E\) is a \emph{hull} for the \(A\)-module \(M\). If \(\cH^{i}(E)=0=\cH^{i}(E^{\vee})\) for all \(i\leq n\) then \(E\) is an \emph{\(n\)-stably reflexive} \(A\)-complex. If \(E\) is \(n\)-stably reflexive for all \(n\) then \(E\) is \emph{stably reflexive}.

\item[(ii)] Let \(h\co S\ra R\) be a flat ring homomorphism. A complex \(E\) of finite projective \(R\)-modules is \emph{\(n\)-stably reflexive with respect to \(h\)} if \(E_{s}=E\ot_{S}k(s)\) is an \(n\)-stably reflexive \(R_{s}=R\ot_{S}k(s)\)-complex for all \(s\in\im\{\mSpec R\ra\Spec S\}\). The complex \(E\) is \emph{stably reflexive with respect to \(h\)} if it is \(n\)-stably reflexive with respect to \(h\) for all \(n\).
\end{enumerate}
\end{defn} 
\begin{rem}\label{rem.refcplx}
The non-standard dual of complexes is used to obtain symmetric statements. It is also used in Knutsen's orginal article \cite{knu:83a}.
\end{rem}
\begin{prop}\label{prop.refcplx}
Let \(h\co S\ra R\) be a flat ring homomorphism and \(M\) a coherent \(R\)-module\textup{.} Suppose \(R\) is coherent\textup{,} \(S\) is noetherian and \(n>1\)\textup{.} 
\begin{enumerate}
\item[(i)] There exists a complex \(E\) of finite projective \(R\)-modules which is a hull for \(M\) such that\textup{:} 
If \(M\) is \(n\)-stably reflexive with respect to \(h\) then  so is \(E\)\textup{.}

\item[(ii)] Let \(E\) be an \(n\)-stably reflexive complex with respect to \(h\) and a hull for \(M\)\textup{.} 

\begin{enumerate}
\item The \(R\)-module \(M\) is \(n\)-stably reflexive with respect to \(h\)\textup{.}

\item For any ring homomorphism \(S\ra S'\) the base change \(E'=E\ot_{S}S'\)  is \(n\)-stably reflexive with respect to \(h'=h\ot_{S}S'\) and as \(R'=R\ot_{S}S'\)-complex\textup{,} and a hull for \(M'=M\ot_{S}S'\)\textup{.}
\end{enumerate}
\end{enumerate}
\end{prop}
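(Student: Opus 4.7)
The plan is to treat (i) and (ii) separately, using Proposition \ref{prop.main} to control the dual $M^*$ and Proposition \ref{prop.nakayama} (via Example \ref{ex.nakcplx}) to transfer cohomological information between fibres and the total complex. For part (i), I would splice a projective resolution of $M$ with a dualised projective resolution of $M^*$. By Proposition \ref{prop.main} the dual $M^*$ is $S$-flat coherent and $n$-stably reflexive with respect to $h$, and both $M$ and $M^*$ satisfy $\xt{j}{R}{M}{R} = 0 = \xt{j}{R}{M^*}{R}$ for $0 < j < n$. Choose finite projective resolutions $P_\bullet \twoheadrightarrow M$ and $Q_\bullet \twoheadrightarrow M^*$, set $E^{-i} = P_i$ for $i \geq 0$ and $E^i = (Q_{i-1})^*$ for $i \geq 1$, and let $d^0\co E^0 \to E^1$ be the composition $P_0 \twoheadrightarrow M \xrightarrow{\sim} M^{**} \hookrightarrow (Q_0)^*$, which is well-defined by reflexivity. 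The result is a complex and a hull for $M$. To verify that $E$ is $n$-stably reflexive with respect to $h$, fix $s\in Z = \im\{\mSpec R \to \Spec S\}$. The $S$-flatness of $M$ and $M^*$ together with $(M^*)_s \cong (M_s)^*$ from Proposition \ref{prop.main}(iii) make $P_\bullet \otimes_S k(s)$ and $Q_\bullet \otimes_S k(s)$ projective resolutions of $M_s$ and $(M_s)^*$, so $E_s$ is the analogous splice for $M_s$; its cohomology $\cH^i(E_s) = \xt{i-1}{R_s}{(M_s)^*}{R_s}$ for $i \geq 2$ vanishes in the range $i \leq n$, while $\cH^0(E_s)$ and $\cH^1(E_s)$ vanish by construction, and the symmetric computation gives vanishing of $\cH^i((E_s)^\vee)$ for $i \leq n$.

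For part (ii) I would first prove the absolute version over a coherent ring $A$: if $E$ is an $n$-stably reflexive $A$-complex and a hull for $M$, then $M$ is $n$-stably reflexive. The vanishings $\cH^i(E) = 0$ for $i \leq 0$ make the negative part $\cdots \to E^{-1} \to E^0 \twoheadrightarrow M$ a projective resolution, whose $A$-dual is, up to reindexing, the positive part of $E^\vee$; hence $\xt{i}{A}{M}{A} \cong \cH^{i+1}(E^\vee)$ for $i \geq 1$, vanishing for $1 \leq i \leq n-1$. Using $\cH^0(E^\vee) = \cH^1(E^\vee) = 0$ one identifies $M^*$ with $\coker d^{-1}_{E^\vee}$, so $E^\vee$ is a hull for $M^*$ and the symmetric argument controls $\xt{i}{A}{M^*}{A}$. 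Reflexivity of $M$ follows from Lemma \ref{lem.D}: $D(M) = \coker((E^0)^*\to(E^{-1})^*)$ admits a projective resolution obtained from $E^\vee$, whose $A$-dual recovers $E$ itself, so $\xt{1}{A}{D(M)}{A}$ and $\xt{2}{A}{D(M)}{A}$ are identified with $\cH^0(E)$ and $\cH^1(E)$, both zero. Back in the relative setting, each fibre $E_s$ is still a hull for $M_s = \coker d^{-1}_{E_s}$ by right exactness of the tensor product, so the absolute case gives $n$-stable reflexivity of $M_s$ as an $R_s$-module.

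It remains to show that $M$ is $S$-flat, which is the main technical step. I would apply Example \ref{ex.nakcplx} and Proposition \ref{prop.nakayama} to the positive three-term complex $K = (E^{-2}\to E^{-1}\to E^0)$ placed in cohomological degrees $0, 1, 2$. The associated $h$-linear cohomological $\delta$-functor $F^q(I) = \cH^q(K\otimes_S I)$ satisfies $F^2(I) = M\otimes_S I$ by right exactness (so $e^2_I$ is the natural isomorphism), while $F^1(k(s)) = \cH^{-1}(E_s) = 0$ for every $s\in Z$. The Nakayama argument in the remark after Proposition \ref{prop.nakayama} then gives $F^1 = 0$, and Proposition \ref{prop.nakayama}(ii) with $q = 2$ delivers the $S$-flatness of $F^2(S) = M$. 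For part (ii)(b), the fibres of $E' = E\otimes_S S'$ at $s'\in Z'$ are the flat base changes $E_s\otimes_{k(s)} k(s')$ along the field extensions $k(s)\to k(s')$, which preserve finite projectivity, commute with dualisation, and preserve cohomology; hence the vanishings for $n$-stable reflexivity of $E'$ with respect to $h'$ transfer from those of $E$, and right exactness of $-\otimes_S S'$ identifies $\coker d^{-1}_{E'}$ with $M' = M\otimes_S S'$.

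The chief obstacle is the $S$-flatness step: $E$ may be unbounded in both directions while Example \ref{ex.nakcplx} requires a positive complex, and the three-term truncation $E^{-2} \to E^{-1} \to E^0$ is just wide enough for $F^1$ to capture the fibre vanishing of $\cH^{-1}(E_s)$ while $F^2$ computes the tensor functor of $M$, so that parts (i) and (ii) of Proposition \ref{prop.nakayama} can combine to yield flatness of $M$.
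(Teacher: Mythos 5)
Your treatment of (i) and (ii)(a) is sound and essentially the paper's argument: the splice of $P$ with $Q^{\vee}$ for (i), and for (ii)(a) the identification of $\xt{i}{R_{s}}{M_{s}}{R_{s}}$ and $\xt{i}{R_{s}}{(M_{s})^{*}}{R_{s}}$ with cohomology of $E_{s}^{\vee}$ and $E_{s}$ together with a Nakayama-type flatness argument. Your route to reflexivity of $M_{s}$ via Lemma \ref{lem.D} and the transpose, and your flatness argument via the three-term truncation $E^{-2}\ra E^{-1}\ra E^{0}$, are minor (valid) variants of what the paper does, which instead dualises twice and deduces flatness of $M$ from flatness of $\coker d^{0}_{E}$.

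The gap is in (ii)(b). You claim that the fibre of $E'$ at a point $s'\in Z'=\im\{\mSpec R'\ra\Spec S'\}$ is a field-extension base change $E_{s}\ot_{k(s)}k(s')$ of a fibre of $E$ at a point $s\in Z$. The formula $E'_{s'}\cong E_{s}\ot_{k(s)}k(s')$ is correct when $s$ denotes the image of $s'$ under $\Spec S'\ra\Spec S$ --- but there is no reason for this $s$ to lie in $Z$, since a closed point of $\Spec R'$ need not map to a closed point of $\Spec R$ (take $S=R=\BB{Z}_{(p)}$ and $S'=\BB{Q}$: then $Z=\{(p)\}$ while the unique point of $Z'$ maps to the generic point $(0)\notin Z$). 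The hypothesis only controls $E_{s}$ for $s\in Z$, so your reduction proves nothing about $E'_{s'}$ in general. The same omission affects the assertion that $E'$ is $n$-stably reflexive \emph{as an $R'$-complex}, which requires acyclicity of $E\ot_{S}S'$ and $E^{\vee}\ot_{S}S'$ themselves in degrees $\leq n$, not only of their fibres. The repair is exactly the machinery you already invoke for flatness: apply Example \ref{ex.nakcplx} and Proposition \ref{prop.nakayama} (i) to (truncations of) $E$ and $E^{\vee}$ to upgrade the fibrewise vanishing $\cH^{i}(E_{s})=0=\cH^{i}(E_{s}^{\vee})$ for $s\in Z$ and $i\leq n$ to $\cH^{i}(E\ot_{S}I)=0=\cH^{i}(E^{\vee}\ot_{S}I)$ for \emph{every} $S$-module $I$; then $I=S'$ gives the absolute statement for $E'$ and $I=k(s')$ gives the statement on fibres for arbitrary $s'\in Z'$. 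This is how the paper proves (ii)(b), and it is also why the paper establishes (ii)(b) before (ii)(a).
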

\begin{proof}
(iib) Let \(s\) be an element in \(\im\{\mSpec R\ra\Spec S\}\). Note that \(E^{\vee}\ot_{S}S'\cong (E')^{\vee}\) and in particular \((E^{\vee})_{s}\cong (E_{s})^{\vee}\) since \(E\) consists of finite projective modules. 
Consider an \(h\)-linear cohomological \(\delta\)-functor defined by \(E^{\vee}\) as in Example \ref{ex.nakcplx} (technically some downward truncation is needed). By assumption \(\cH^{i}(E_{s}^{\vee})=0\) for \(i\leq n\) and by Proposition \ref{prop.nakayama} (i), \(\cH^{i}(E^{\vee}\ot_{S}I)=0\) for all \(S\)-modules \(I\) and \(i\leq n\). If \(I=S'\) then \(\cH^{i}((E')^{\vee})=0\) and if  \(I=k(s')\) then \(\cH^{i}(E'_{s'}{\!\!}^{\vee})=0\) for any \(s'\in\im\{\mSpec R'\ra\Spec S'\}\) and \(i\leq n\). Replacing \(E^{\vee}\) by \(E\) in this argument gives \(\cH^{i}(E\ot_{S}I)=0\) and in particular \(\cH^{i}(E'_{s'})=0=\cH^{i}(E')\) for \(i\/\leq n\). Since \(\coker d^{-1}_{E'}\cong \coker d^{-1}_{E}\ot_{S}{\id}_{S'}= M'\), (b) is proved.

(iia) The map \(M\ot_{S}I\ra E^{1}\ot_{S}I\) induced by \(d_{E}^{0}\ot_{S}{\id}_{I}\) is injective since \(\cH^{0}(E\ot_{S}I)\linebreak[1]=0\). Consider the short exact sequence of coherent \(R\)-modules \(0\ra M\ra E^{1}\ra N\ra0\) where \(N\cong\coker d^{0}_{E}\). It remains short exact after applying \(-\ot_{S}I\) since \(\cH^{1}(E\ot_{S}I)=0\). As \(E^{1}\) is \(S\)-flat by assumption, \(N\) is \(S\)-flat and so \(M\) is \(S\)-flat too.

Dualising the exact sequence \(E^{-1}_{s}\ra E^{0}_{s}\ra M_{s}\ra 0\) gives the exact sequence \(0\ra M_{s}^{*}\ra (E_{s}^{\vee})^{1}\ra (E_{s}^{\vee})^{2}\), in particular \(M_{s}^{*}\cong\ker d_{E_{s}^{\vee}}^{1}\). Since \(\cH^{1}(E_{s}^{\vee})=0\), \(\ker d_{E_{s}^{\vee}}^{1}\) equals \(\im d_{E^{\vee}_{s}}^{0}\) and the dual of \(M_{s}\ra E^{1}_{s}\) is surjective. Since \(\cH^{0}(E_{s}^{\vee})=0\) the sequence \((E^{2}_{s})^{*}\ra (E^{1}_{s})^{*}\ra M_{s}^{*}\ra 0\) is exact. Dualising once more gives the exact sequence \(0\ra M_{s}^{**}\ra E^{1}_{s}\ra E^{2}_{s}\) and as \(M_{s}\cong \ker d_{E_{s}}^{1}\), \(M_{s}\) is reflexive.
Moreover \(\xt{i}{R_{s}}{M_{s}}{R_{s}}\cong\cH^{i{+}1}(E_{s}^{\vee})\) and \(\xt{i}{R_{s}}{M_{s}^{*}}{R_{s}}\cong\cH^{i{+}1}(E_{s})\) for \(i>0\) and so \(M\) is \(n\)-stably reflexive with respect to \(h\). 

(i) Since \(R\) and \(M\) are coherent, so is \(M^{*}\). Pick resolutions \(P\thr M\) and \(Q\thr M^{*}\) by finite projective modules. Splicing \(P\) with \(Q^{\vee}\) along \(P^{0}\thr M\ra M^{**}\hra (Q^{0})^{*}\) gives \(E\). By definition \(\cH^{i}(E_{s})\cong\tor{S}{-i}{M}{k(s)}\) for all \(i<0\) and since \(M\) is \(S\)-flat \(\cH^{i}(E_{s})=0\) for all \(i<0\). The resolution of \(M_{s}\) thus obtained implies that \(\cH^{i}(E_{s}^{\vee})\cong\xt{i-1}{R_{s}}{M_{s}}{R_{s}}\) for all \(i>1\). Hence by assumption \(\cH^{i}(E_{s}^{\vee})=0\) for \(1<i\leq n\) and \(\ker d_{E_{s}^{\vee}}^{1}\cong M_{s}^{*}\). By Proposition \ref{prop.main} (ii), \(M^{*}\) is \(n\)-stably reflexive over \(S\) too. In particular \(M^{*}\) is \(S\)-flat and as above \(\cH^{i}(E_{s}^{\vee})=0\) for \(i<0\) and \(\coker d_{E_{s}^{\vee}}^{-1}=d_{Q_{s}}^{-1}\cong M_{s}^{*}\) (note \((M^{*})_{s}\cong (M_{s})^{*}\) by Proposition \ref{prop.main} (iii)). Hence \(\cH^{i}(E_{s}^{\vee})\) vanishes for all \(i\leq n\). To obtain the symmetric statement \(\cH^{i}(E_{s})=0\) for all \(i\leq n\) note that \(M\cong M^{**}\) by Proposition \ref{prop.main} (i). We consider \(P\) as a resolution of \(M^{**}\). Splicing \(Q\) with \(P^{\vee}\) along \(Q^{0}\ra M^{*}\cong (M^{*})^{**}\ra (P^{0})^{*}\) gives \(E^{\vee}\). Now the argument for \(M\) and \(E\) above applies to \(M^{*}\) and \(E^{\vee}\).
\end{proof}
\begin{rem}\label{rem.refcplx2}
Knudsen's second characterisation (Theorem 2 (2) in \cite[Appendix]{knu:83a}) of a stably reflexive module with respect to a flat homomorphism of noetherian rings \(h\co S\ra R\) is in terms of a stably reflexive complex (Knudsen does not name this property).
\begin{defn*}[{\cite{knu:83a}, Appendix}]
A finite \(R\)-module \(M\) is a stably reflexive module with respect to \(h\) if there exists a complex of finite projective modules \(E\) which is a hull for \(M\) such that \(E\ot_{S}I\) and \(E^{\vee}\ot_{S}I\) are acyclic for all \(S\)-modules \(I\).
\end{defn*}
In particular the complex \(E\) is stably reflexive with respect to \(h\) as in Definition \ref{def.refcplx}. Conversely, a complex \(E\) stably reflexive with respect to \(h\) satisfies Knudsen's conditions by the proof of Proposition \ref{prop.refcplx} (iib). 
\end{rem}
The following notion will generalise Knutsen's in his third characterisation of a stably reflexive module.
\begin{defn}
Let \(A\) be a ring. An \(A\)-module \(N\) is \emph{left \(n\)-orthogonal to \(A\)} if \(\xt{i}{A}{N}{A}=0\) for all \(0<i\leq n\) and is \emph{left orthogonal to \(A\)} if it is left \(n\)-orthogonal to \(A\) for all \(n\). 

Let \(h\co S\ra R\) be a flat ring homomorphism and \(N\) an \(R\)-module. Then \(N\) is \emph{left \(n\)-orthogonal to \(h\)} if \(N\) is \(S\)-flat and \(N_{s}\) is left \(n\)-orthogonal to \(R_{s}\) for all \(s\in\im\{\mSpec R\ra \Spec S\}\). If \(N\) is left \(n\)-orthogonal to \(h\) for all \(n\) then \(N\) is \emph{left orthogonal to \(h\)}.
\end{defn}
\begin{prop}\label{prop.ortho}
Let \(h\co S\ra R\) be a flat ring homomorphism and \(M\) a coherent \(R\)-module\textup{.} Assume \(R\) is coherent\textup{,} \(S\) is noetherian and \(n\) is a positive integer\textup{.} Let \(E\) be an \(R\)-complex which is a hull for \(M\) as given in \textup{Proposition \ref{prop.refcplx} (i).}
\begin{enumerate}
\item[(i)] If \(N\) is a coherent \(R\)-module which is left \(2n\)-orthogonal to \(h\) then \(\syz{R}{n+1}N\) is \(n\)-stably reflexive with respect to \(h\)\textup{.}
\item[(ii)] Assume \(n>1\). The module \(M\) is \(n\)-stably reflexive with respect to \(h\) if and only if \(N:=\coker\{E^{n}\ra E^{n+1}\}\) is left \(2n\)-orthogonal to \(h\)\textup{.}
\item[(iii)] The module \(M\) is stably reflexive with respect to \(h\) if and only if \(N^{j}:=\coker\{E^{j-1}\ra E^{j}\}\) is left orthogonal to \(h\) for all \(j\in\BB{Z}\)\textup{.} In this case \(N^{j}\) is stably reflexive with respect to \(h\) for all \(j\)\textup{.}
\end{enumerate}
\end{prop}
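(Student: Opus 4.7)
The overall strategy is to reduce each part to the fibrewise absolute case via $S$-flatness and the base change results of Section~2 (Propositions \ref{prop.nakayama} and \ref{cor.xtdef}). After this reduction we work over a single coherent ring $A=R_{s}$ with coherent modules. For (i), take a finite projective resolution $P^{\bullet}\to N$ and set $M'=\syz{A}{n+1}N$. Standard dimension shifting through the short exact sequences $0\to \syz{A}{j+1}N\to P^{-j}\to \syz{A}{j}N\to 0$ yields $\xt{i}{A}{M'}{A}\cong \xt{i+n+1}{A}{N}{A}=0$ for $1\leq i\leq n-1$. Reflexivity of $M'$ will follow from two observations. First, $M'$ is itself a syzygy, so it embeds in a projective and $\ker\sigma_{M'}=0$. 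Second, the presentation $P^{-n-1}\to P^{-n}\to M'\to 0$ gives, upon dualizing and using $\xt{n}{A}{N}{A}=0$, a short exact sequence $0\to (M')^{*}\to (P^{-n-1})^{*}\to D(M')\to 0$. The associated long exact sequence yields $\coker\sigma_{M'}\cong \xt{2}{A}{D(M')}{A}\cong \xt{1}{A}{(M')^{*}}{A}$, which equals $\ker\sigma_{D((M')^{*})}$ by Lemma \ref{lem.D}; since $D((M')^{*})$ is stably the syzygy $\syz{A}{n-1}N$, torsionlessness of syzygies gives the vanishing. For the $\Ext$-vanishing of $(M')^{*}$, the same SES gives $\xt{i}{A}{(M')^{*}}{A}\cong \xt{i+1}{A}{D(M')}{A}$ for $i\geq 1$; a projective resolution of $D(M')$ can be built by splicing a resolution of $N^{*}$ onto the exact sequence $0\to N^{*}\to (P^{0})^{*}\to\cdots\to (P^{-n-1})^{*}\to D(M')\to 0$ (exact by $\xt{j}{A}{N}{A}=0$ for $1\leq j\leq n+1$), and dualizing this resolution reproduces the exact stretch $P^{-n-1}\to \cdots\to P^{0}$ of the original resolution of $N$, forcing $\xt{k}{A}{D(M')}{A}=0$ for $1\leq k\leq n$.

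Part (ii) $(\Rightarrow)$ is an unwinding of Proposition \ref{prop.refcplx}(i): the assumption forces $E$ to be $n$-stably reflexive, and the complex $E$ then furnishes fibrewise a projective resolution of $N_{s}$ whose dual identifies $\xt{k}{R_{s}}{N_{s}}{R_{s}}$ with $\cH^{k-n}(E_{s}^{\vee})$, vanishing for $1\leq k\leq 2n$. The $S$-flatness of $N$ is tracked through the short exact sequences of images in $E$. The converse $(\Leftarrow)$ is the main technical crux. Part (i) already gives that $M':=\syz{R}{n+1}N$ is $n$-stably reflexive, and the remaining task is to identify $M'$ with $M$ up to projective summands. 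My plan is to use iterated Schanuel's lemma beginning with the partial presentation $E^{n}\to E^{n+1}\to N\to 0$ and descending through the terms $E^{i}$ for $i<n$ down to $M$, leveraging the explicit construction of the hull $E$ in Proposition \ref{prop.refcplx}(i): direct computation shows that $\cH^{i}(E_{s})$ and $\cH^{i}(E_{s}^{\vee})$ in relevant degrees are $\Ext$ groups of $M_{s}^{*}$ and $M_{s}$ respectively, with $\cH^{i}(E_{s}^{\vee})=0$ for $i\leq 1$ unconditionally. The symmetric identification $N^{*}\cong \syz{R}{n+1}M^{*}$ (stably), obtained by dualizing $E^{n}\to E^{n+1}$ and using the $Q$-resolution structure of the upper degrees of $E$, provides the missing input and together with (i) yields the required vanishings $\xt{j}{R_{s}}{M_{s}}{R_{s}}=\xt{j}{R_{s}}{M_{s}^{*}}{R_{s}}=0$ for $1\leq j\leq n-1$ and the reflexivity of $M_{s}$.

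For (iii), the forward direction uses that $M$ stably reflexive implies $E$ stably reflexive (all cohomology vanishes), so each $N^{j}$ is the cokernel in an acyclic stretch of $E$; applying (i) to $N^{j}$ and its dual gives $N^{j}$ stably reflexive. The reverse direction follows from applying (ii) for each $n>1$: if each $N^{j}$ is left orthogonal to $h$, then in particular $N^{n+1}$ is left $2n$-orthogonal for every $n>1$, so $M$ is $n$-stably reflexive for every such $n$, hence stably reflexive.

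The main obstacle I expect is the $(\Leftarrow)$ direction of (ii): the Schanuel bookkeeping must respect the asymmetric unconditional acyclicity of the hull $E$ (vanishing of $\cH^{i}(E_{s}^{\vee})$ in low degrees $i\leq 1$ is automatic, but $\cH^{i}(E_{s})$ in degrees $0,1$ measures exactly the failure of reflexivity of $M_{s}$), and the dual identification $N^{*}\cong \syz{R}{n+1}M^{*}$ is essential to close the argument without circularity.
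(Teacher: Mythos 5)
Your overall architecture is the paper's: reduce to the fibres, prove (i) by dimension shifting together with dualizing a long stretch of the resolution of \(N\), get (ii)\((\Rightarrow)\) from Proposition~\ref{prop.refcplx}(i), and obtain (iii) by letting \(n\) vary. In (i), however, the displayed ``short exact sequence'' \(0\to (M')^{*}\to (P^{-n-1})^{*}\to D(M')\to 0\) does not exist: by definition \(D(M')\) is the cokernel of \((P^{-n-1})^{*}\to(P^{-n-2})^{*}\), so the correct object is the four-term exact sequence \(0\to (M')^{*}\to (P^{-n-1})^{*}\to(P^{-n-2})^{*}\to D(M')\to 0\), dimension shifting gives \(\xt{i}{A}{(M')^{*}}{A}\cong\xt{i+2}{A}{D(M')}{A}\) only for \(i\geq 1\), and the claimed isomorphism \(\coker\sigma_{M'}\cong\xt{1}{A}{(M')^{*}}{A}\) is neither a consequence nor true in general (it would also break your argument at \(n=1\), where \(D((M')^{*})\) is stably \(N\) itself, not a syzygy). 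Fortunately the last sentence of that paragraph is the right argument and makes the detour unnecessary: the spliced resolution of \(D(M')\) dualizes to contain the exact stretch \(P^{-n-2}\to\cdots\to P^{0}\), giving \(\xt{k}{A}{D(M')}{A}=0\) for \(1\leq k\leq n+1\), whence reflexivity of \(M'\) by Lemma~\ref{lem.D} and the vanishing for \((M')^{*}\) by the shift above; this is the same mechanism the paper packages via the module \(L\) and the acyclic dual complex \(K\).

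The genuine gap is the \((\La)\) direction of (ii), which you only plan and explicitly do not close. Since \(\cH^{0}(E)\cong\ker\sigma_{M}\), \(\cH^{1}(E)\cong\coker\sigma_{M}\) and \(\cH^{j}(E)\cong\xt{j-1}{A}{M^{*}}{A}\) for \(2\leq j\leq n+1\), the identification \(M\cong\syz{R}{n+1}N\) up to projective summands --- which is what you need in order to invoke (i) --- is equivalent to the acyclicity of \(E\) in degrees \(0,\dots,n\), i.e.\ to most of the conclusion; iterated Schanuel stalls after one step because \(E^{n-1}\to E^{n}\to\im d_{E}^{n}\to 0\) is a presentation only if \(\cH^{n}(E)=0\). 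Your proposed rescue, the (correct) unconditional identification \(N^{*}\cong\syz{R}{n+1}M^{*}\), is never converted into the needed vanishings: left \(2n\)-orthogonality constrains \(\Ext^{*}(N,R)\), not \(\Ext^{*}(N^{*},R)\), so nothing about \(M^{*}\) follows from it directly. The workable input is rather that \(N\cong D(\syz{R}{n-1}M^{*})\), the transpose of an honest syzygy of \(M^{*}\) computed from \(Q\); Lemma~\ref{lem.D} then turns \(\xt{1}{A}{N}{A}=\xt{2}{A}{N}{A}=0\) into reflexivity of \(\syz{R}{n-1}M^{*}\) and the higher vanishing of \(\xt{i}{A}{N}{A}\) into vanishing for \((\syz{R}{n-1}M^{*})^{*}\), from which one must climb back down to \(M\). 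You have located exactly where the difficulty sits (the paper itself disposes of this direction with ``one direction is (i)'', which hides the same issue), but the decisive step is missing; the same applies to the fibrewise reduction for (ii), since identifying \(E_{s}\) with a hull of \(M_{s}\) and \(N_{s}\) with the corresponding cokernel over \(R_{s}\) already uses \(S\)-flatness of \(M\) and \(M^{*}\) and base change of duals.
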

\begin{proof}
(i) Let \(\dots\xra{d^{-2}} P^{-1}\xra{d^{-1}} P^{0}\ra N\ra 0\) be a resolution of \(N\) by finite projective modules. Put \({}^{1\!}M=\coker d_{P}^{-(n+2)}\) and let \(s\in Z:=\im\{\mSpec R\ra \Spec S\}\).  Since \(N\) is \(S\)-flat, \({}^{1\!}M=\syz{R}{n+1}N\) is \(S\)-flat too and, furthermore, \(P_{s}\) is a resolution of \(N_{s}\) by finite projective \(R_{s}\)-modules. For \(i>0\) one has \(\xt{i}{R_{s}}{{}^{1\!}M_{s}}{R_{s}}\cong\cH^{n+i+2}(P^{\vee}_{s})\cong \xt{n{+}i{+}1}{R_{s}}{N_{s}}{R_{s}}\) and the latter is \(0\) for \(0<i<n\) by assumption. Moreover, if \(L\) denotes \(\coker\{d_{P^{\vee}}^{2n+1}\co(P^{-2n})^{*}\ra (P^{-(2n{+}1)})^{*}\}\) the sequence (with \(d=d_{P_{s}^{\vee}}\))
\begin{equation}
0\la L_{s}\la (P_{s}^{-(2n{+}1)})^{*}\xla{d^{2n{+}1}}(P_{s}^{-2n})^{*}\la\dots \xla{d^{1}}(P_{s}^{0})^{*}\la N_{s}^{*}\la 0
\end{equation}
is exact. We use the complex \(K\co(P^{0})^{*}\ra\dots\ra (P^{-(2n+1)})^{*}\) to define a cohomological \(\delta\)-functor \(\{F^{q}\}\) as in Example \ref{ex.nakcplx}. Since \(F^{q}(k(s))=0\) for all \(0<q<2n+1\) and \(s\) in \(Z\), \(F^{q}(I)=0\) for all \(S\)-modules \(I\) and \(0<q<2n+1\) by Proposition \ref{prop.nakayama} (i). In particular the complex \(K\) gives a projective \((2n+1)\)-presentation of \(L=F^{2n+1}(S)\). By Proposition \ref{prop.nakayama} (ii), \(L\) is \(S\)-flat. Hence \({}^{1\!}M^{*}\cong \ker d_{P^{\vee}}^{n+2}\cong\syz{R}{n{+}1}L\) is \(S\)-flat. 
For \(0<i<n\) we get that \(\xt{i}{R_{s}}{{}^{1\!}M_{s}^{*}}{R_{s}}\cong\cH^{i-n+1}(K_{s}^{\vee})\cong\cH^{i-n}(P_{s})\). The latter is \(0\) if \(0<i<n\).
Since \({}^{1\!}M_{s}^{*}\cong\coker d_{P_{s}^{\vee}}^{n}\) reflexivity follows: 
\begin{equation}
{}^{1\!}M_{s}^{**}\cong \ker ((d_{P_{s}^{\vee}}^{n})^{*})\cong \ker d_{P_{s}}^{-n}\cong {}^{1\!}M_{s}
\end{equation}
(ii) One direction is (i). For the `only if' direction
note that 
\begin{equation}\label{eq.ortho}
\dots\lra E^{n}_{s}\lra E^{n+1}_{s}\lra N_{s}\ra 0
\end{equation}
is an \(R_{s}\)-projective resolution of \(N_{s}\) since \(E\) is \(n\)-stably reflexive over \(S\) by Proposition \ref{prop.refcplx} (i). Hence if \(i>0\) then \(\xt{i}{R_{s}}{N_{s}}{R_{s}}\cong\cH^{i-n}(E_{s}^{\vee})\). Then \(\xt{i}{R_{s}}{N_{s}}{R_{s}}=0\) for \(0<i\leq 2n\) since \(E\) is \(n\)-stably reflexive over \(S\). By Proposition \ref{prop.refcplx} (iib) \(E\) is an \(n\)-stably reflexive \(R\)-complex, so \(\dots\ra E^{n}\ra E^{n+1}\ra N\ra 0\) is an exact sequence. Applying \(-\ot_{S}k(s)\) gives the resolution in \eqref{eq.ortho} and thus, by Proposition \ref{prop.nakayama} (i), applying \(-\ot_{S}I\) for any \(S\)-module \(I\) gives a resolution of \(N\ot_{S}I\). Hence \(N\) is \(S\)-flat. 

(iii) By Proposition \ref{prop.refcplx} (i), \(M\) stably reflexive with respect to \(h\) implies that \(E\) is stably reflexive with respect to \(h\). In particular \(E_{s}^{\vee}\) is acyclic and hence \(N^{j}\) is left orthogonal to \(h\) for all \(j\). The reverse implication follows from (ii). As \(M=N^{0}\) the second part is clear by `translational symmetry'.
\end{proof}
\begin{ex}\label{ex.syzref}
Let \(h\co S\ra R\) be a flat homomorphism of noetherian rings. Let \(d\) denote the minimal depth at a maximal ideal of \(R_{s}=R\ot_{S}k(s)\) for all \(s\in Z=\im\{\mSpec R\ra \Spec S\}\). Suppose \(N\) is an \(S\)-flat finite \(R\)-module with \(\dim N_{s}=0\) for all \(s\in Z\). Then \(\syz{R}{n{+}1}N\) is \(n\)-stably reflexive with respect to \(h\) for all \(n>0\) with \(2n<d\) by Proposition \ref{prop.ortho} (i). If in addition \(n>1\) then \(N\) is left orthogonal to \(h\) by Proposition \ref{prop.ortho} (ii).
\end{ex}
\begin{rem}\label{rem.syzref}
Knudsen's third characterisation (Theorem 2 (3) in \cite[Appendix]{knu:83a}) of a stably reflexive module with respect to a flat homomorphism of noetherian rings \(h\co S\ra R\) is in terms of the syzygies of \(M\) (Knudsen does not name this property).
\begin{defn*}[{\cite{knu:83a}, Appendix}]
A finite \(R\)-module \(M\) is a stably reflexive module with respect to \(h\) if there exists an acyclic complex of finite projective modules \(E\) which is a hull for \(M\) such that \(N^{j}:=\coker\{E^{j-1}\ra E^{j}\}\) and \((N^{j})^{*}\) are left orthogonal to \(R\) and \(S\)-flat for all \(j\in\BB{Z}\)\textup{.}
\end{defn*}
Assume this. Let \(I\) be an \(S\)-module and \(j\) any integer. Applying \(-\ot_{S}I\) to the short exact sequence \(0\ra N^{j-1}\ra E^{j}\ra N^{j}\ra 0\) gives a short exact sequence since \(N^{j}\) is \(S\)-flat. This implies that \(E\ot_{S}I\) is acyclic. Dualising the short exact sequence gives a short exact sequence \(\xi\co0\ra (N^{j})^{*}\ra (E^{j})^{*}\ra (N^{j-1})^{*}\ra 0\) since \(\xt{1}{R}{N^{j}}{R}=0\). This implies that \(E^{\vee}\) is acyclic. Moreover, since \((N^{j-1})^{*}\) is \(S\)-flat, \(\xi\ot_{S}I\) is short exact. This implies that \(E^{\vee}\ot_{S}I\) is acyclic. Let \(s\) be any element in \(\im\{\mSpec R\ra\Spec S\}\). Since \(E_{s}\) and \(E_{s}^{\vee}\) are acyclic \(N^{j}_{s}\) is left orthogonal to \(R_{s}\). This shows that \(N^{j}\) is left orthogonal to \(h\) for all \(j\).

Conversely, given \(E\) such that \(N^{j}\) is left orthogonal to \(h\) for all \(j\) then the \(N^{j}\) are stably reflexive over \(S\) by Proposition \ref{prop.ortho} (iii). By Proposition \ref{prop.main} (i) the \(N^{j}\) are stably reflexive as \(R\)-modules. Hence \(E\) satisfies the properties in Knudsen's definition.

In view of Remarks \ref{rem.main} and \ref{rem.refcplx2}, we conclude that Knudsen's Theorem 2 in \cite[Appendix]{knu:83a} is generalised in Proposition \ref{prop.main}, \ref{prop.refcplx} and \ref{prop.ortho}.
\end{rem}
As an application we consider the (classical) case of a \(2\)-periodic complex associated to a matrix factorisation as introduced by D.\ Eisenbud in \cite{eis:80}.
\begin{defn}\label{def.mf}
Suppose \(T\) is a noetherian ring and \(f\) is a \(T\)-regular element. \emph{A matrix factorisation of \(f\)} is a pair \((\phi,\psi)\) of \(T\)-linear maps \(\phi\co G\ra F\) and \(\psi\co F\ra G\) between finite \(T\)-free modules such that \(\phi\psi=f{\cdot}\id_{F}\) and \(\psi\phi=f{\cdot}\id_{G}\).
\end{defn}
It follows that \(\phi\) and \(\psi\) are injective and that \(\Rk F=\Rk G\). It is also sufficient to check one of the equations.
\begin{cor}\label{cor.mf}
Let \(T\) be a noetherian ring and suppose \((\phi,\psi)\) is a matrix factorisation of a \(T\)-regular element \(f\)\textup{.} Put \(R=T/(f)\)\textup{.} Reduction by \(-\ot_{T}R\) of \((\phi,\psi)\) gives an acyclic \(2\)-periodic complex of free \(R\)-modules
\begin{equation*}
C(\phi,\psi)\co \quad\dots \xla{\bar{\psi}}\bar{F}\xla{\bar{\phi}}\bar{G}\xla{\bar{\psi}}\bar{F}\xla{\bar{\phi}}\bar{G}\xla{\bar{\psi}}\dots.
\end{equation*}
In addition\textup{,} suppose \(S\ra T\) is a flat ring homomorphism such that the image \(f_{s}\) of \(f\) is a \(T_{s}\)-regular element for all \(s\in Z=\im\{\mSpec T\ra\Spec S\}\)\textup{.} Then the induced ring homomorphism \(h\co S\ra T/(f)\) is flat\textup{.} Moreover\textup{;} \(C(\phi,\psi)\) is a stably reflexive complex with respect to \(h\) and a hull for the \(R\)-module \(\coker\phi\) which is stably reflexive with respect to \(h\)\textup{.}
\end{cor}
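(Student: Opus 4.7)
The acyclicity of $C(\phi,\psi)$ in the first sentence is Eisenbud's classical computation and is logically independent of the base change hypothesis. The relations $\phi\psi = f\,\id_F$ and $\psi\phi = f\,\id_G$ immediately give $\bar\phi\bar\psi = 0 = \bar\psi\bar\phi$, so $C(\phi,\psi)$ is a complex. For acyclicity, a standard diagram chase suffices: given $x\in\bar F$ with $\bar\psi(x)=0$, lift to $\tilde x\in F$ and write $\psi(\tilde x)=fy$ for some $y\in G$; then $f\tilde x = \phi\psi(\tilde x) = f\phi(y)$, and $T$-regularity of $f$ gives $\tilde x=\phi(y)$, so $x=\bar\phi(\bar y)$. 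The symmetric argument handles $\ker\bar\phi \sbeq \im\bar\psi$.

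For the flatness of $h\co S\ra R=T/(f)$, I would apply Corollary \ref{cor.nak} at every maximal ideal $\fr m\in\mSpec R$. Its preimages $\fr n\in\Spec T$ (which is maximal, as $T/\fr n\cong R/\fr m$) and $\fr p=h^{-1}(\fr m)\in\Spec S$ make $S_{\fr p}\ra T_{\fr n}$ a flat local homomorphism. With $s=\fr p\in Z$, the image of $f$ in $T_{\fr n}/\fr p T_{\fr n}$ is a localisation of $f_s\in T_s$, which is regular by hypothesis. Corollary \ref{cor.nak} applied to the one-term sequence $(f)$ then makes $R_{\fr m}=T_{\fr n}/(f)$ flat over $S_{\fr p}$, and since flatness can be checked at the maximal ideals of $R$, $h$ is flat.

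Next, I would index $E=C(\phi,\psi)$ so that $d_E^{-1}=\bar\phi$; then $\coker d_E^{-1}=\coker\bar\phi=\coker\phi$ (using that $f\cdot\coker\phi=0$), so $E$ is a hull for $\coker\phi$. A brief bookkeeping shows that $(\phi^*,\psi^*)$ is again a matrix factorisation of $f$, this time on the finite free $T$-modules $F^*,G^*$, and that $E^{\vee}$ coincides, up to a shift, with $C(\phi^*,\psi^*)$. Because $F$ and $G$ are finite free, the functor $-\ot_S k(s)$ commutes with taking the non-standard dual; thus $(E^{\vee})_s\cong C(\phi_s^*,\psi_s^*)$ and $E_s\cong C(\phi_s,\psi_s)$. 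Both $(\phi_s,\psi_s)$ and $(\phi_s^*,\psi_s^*)$ are matrix factorisations in $T_s$ of the $T_s$-regular element $f_s$, so the first part of the corollary applied in each fibre gives acyclicity of both $E_s$ and $E_s^{\vee}$ for every $s\in\im\{\mSpec R\ra\Spec S\}$. Hence $E$ is stably reflexive with respect to $h$ by Definition \ref{def.refcplx} (ii).

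Finally, Proposition \ref{prop.refcplx} (iia) applied to $E$ for each $n>1$ yields that $\coker\phi$ is $n$-stably reflexive with respect to $h$ for all $n>1$; since $n$-stable reflexivity for larger $n$ subsumes that for smaller $n$, $\coker\phi$ is stably reflexive with respect to $h$. The only mildly delicate step is keeping the grading of the two-periodic complex and its dual in line; once that is settled, the main technical content in each fibre is just the acyclicity statement of the first part applied to the matrix factorisation $(\phi_s^*,\psi_s^*)$, and the rest is a direct appeal to the already-established results.
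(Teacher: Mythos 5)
Your proof is correct and follows essentially the same route as the paper: reduce to the fibres, observe that $(\phi_s,\psi_s)$ and the dual pair $(\phi^*,\psi^*)$ are again matrix factorisations so that $E^{\vee}$ is again of the form $C(\phi^*,\psi^*)$, get flatness of $R$ from Corollary \ref{cor.nak}, and conclude via Proposition \ref{prop.refcplx}. The only differences are cosmetic: you prove the acyclicity of $C(\phi,\psi)$ by a direct diagram chase where the paper cites \cite[5.1]{eis:80}, and you spell out the localisation needed to invoke the (local) Corollary \ref{cor.nak}, which the paper leaves implicit.
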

\begin{proof}
The first part is \cite[5.1]{eis:80}. Base change \((\phi_{s},\psi_{s})\) of \((\phi,\psi)\) to \(T_{s}\) is a matrix factorisation of \(f_{s}\). By Corollary \ref{cor.nak}, \(R\) is \(S\)-flat.
Since the dual of the complex \(C(\phi,\psi)\) is the complex of the dual maps \(C(\phi^{*},\psi^{*})\) we conclude from the first part of the statement and Proposition \ref{prop.refcplx}.
\end{proof}
\section{Approximation}\label{sec.approx}
We prove approximation theorems with \(n\)-stably reflexive modules resembling Cohen-Macaulay approximation in a flat family. The results generalise the classical setting of the Auslander-Buchweitz axioms in two directions. Firstly, the approximations are given in the relative setting with flat families of pairs (ring, module). This should make the results applicable to the local study of moduli, as we indeed show in the last two sections; cf.\ Remark \ref{rem.Finn1}. Secondly, we use approximation categories which do not satisfy the Auslander-Buchweitz axioms except in the stably reflexive case. Since the category of \(n\)-stably reflexive modules (strictly) contains the stably reflexive ones, the category of modules which can be approximated is enlarged too. Applications in deformation theory typically only need vanishing of cohomology in low degrees, e.g.\ \(n=3\) or \(n=4\) is sufficient. In general the parametres involved in the categories make the statements more precise.  

We will phrase our results in the language of fibred categories\footnote{We have chosen to work with rings instead of (affine) schemes. Our definition of a fibred category \(p\co\cat{F}\ra\cat{C}\) reflects this choice and is equivalent to the functor of opposite categories \(p^{\text{op}}\co\cat{F}^{\text{op}}\ra\cat{C}^{\text{op}}\) being a fibred category as defined in \cite{FAG}.}. We therefore briefly recall some of the basic notions, taken mainly from A.\ Vistoli's article in \cite{FAG}.
Fix a category \(\cat{C}\) and a \emph{category over} \(\cat{C}\), i.e.\ a functor \(p\co\cat{F}\ra\cat{C}\). 
To an object \(T\) in \(\cat{C}\), let \(\cat{F}(T)\); the \emph{fiber of \(\cat{F}\) over \(T\)} (or just the \emph{fibre category}), denote the subcategory of arrows \(\phi\) in \(\cat{F}\) such that \(p(\phi)=\id_{T}\).
An arrow \(\phi_{1}\co\xi\ra\xi_{1}\) in \(\cat{F}\) is \emph{cocartesian} if for any arrow \(\phi_{2}\co\xi\ra\xi_{2}\) in \(\cat{F}\) and any arrow \(f_{21}\co p(\xi_{1})\ra p(\xi_{2})\) in \(\cat{C}\) with \(f_{21} p(\phi_{1})=p(\phi_{2})\) there exists a unique arrow \(\phi_{21}\co\xi_{1}\ra\xi_{2}\) with \(p(\phi_{21})=f_{21}\) and \(\phi_{21}\phi_{1}=\phi_{2}\). If for any arrow \(f\co T\ra T'\) in \(\cat{C}\) and any object \(\xi\) in \(\cat{F}\) with \(p(\xi)=T\) there exists a cocartesian arrow \(\phi\co\xi\ra \xi'\) for some \(\xi'\) with \(p(\phi)=f\), then \(\cat{F}\) (or rather \(p\co\cat{F}\ra\cat{C}\)) is a \emph{fibred category}. Moreover, \(\xi'\) will be called a \emph{base change} of \(\xi\) by \(f\). If \(\xi''\) is another base change of \(\xi\) by \(f\) then \(\xi'\) and \(\xi''\) are isomorphic over \(T'\) by a unique isomorphism. 
A \emph{morphism of fibred categories} \((\cat{F}_{1},p_{1})\ra(\cat{F}_{2},p_{2})\) over \(\cat{C}\) is a functor \(F\co\cat{F}_{1}\ra\cat{F}_{2}\) with \(p_{2}F=p_{1}\) such that \(\phi\) cocartesian implies \(F(\phi)\) cocartesian. 
A category with all arrows being isomorphisms is a groupoid. A fibred category \(\cat{F}\) over \(\cat{C}\) is called a \emph{category fibred in groupoids} (often abbreviated to groupoid) if all fibres \(\cat{F}(T)\) are groupoids. Then all arrows in \(\cat{F}\) are cocartesian. 

Let \(\Algf\) be the category with objects faithfully flat finite type algebras \(h\co S\ra R\) of noetherian rings. Let \(h_{i}\co S_{i}\ra R_{i}\), \(i=1,2\), be two such algebras. An arrow \((g,f)\co h_{1}\ra h_{2}\) is a pair of ring homomorphisms \(g\co S_{1}\ra S_{2}\) and \(f\co R_{1}\ra R_{2}\) such that \(h_{2}g=fh_{1}\) and such that the induced map \(R_{1}\ot_{S_{1}} S_{2}\ra R_{2}\) is an isomorphism:
\begin{equation}\label{eq.coc}
\xymatrix@C-0pt@R-8pt@H-30pt{
R_{1}\ar[r]^{f} & R_{2} & R_{1}\ot_{S_{1}}S_{2} \ar[l]_(0.6){\simeq} \\
S_{1}\ar[u]^{h_{1}}\ar[r]^{g} & S_{2}\ar[u]_{h_{2}}
}
\end{equation}
Arrows are composed by composing maps `coordinate-wise'.
Let \(\cat{NR}\) denote the category of noetherian rings. The forgetful functor \(p\co \Algf\ra\cat{NR}\) which takes \((g,f)\co h_{1}\ra h_{2}\) to \(g\co S_{1}\ra S_{2}\) makes \(\Algf\) a category fibred in groupoids over \(\cat{NR}\).

Let \(\modf\) be the category of pairs \((h\co S\ra R,N)\) with \(h\) in \(\Algf\) and \(N\) an \(S\)-flat finite \(R\)-module. A morphism \((h_{1}\co S_{1}\ra R_{1},N_{1})\ra (h_{2}\co S_{2}\ra R_{2},N_{2})\) is a morphism \((g,f)\co h_{1}\ra h_{2}\) in \(\Algf\) and a \(f\)-linear map \(\alpha\co N_{1}\ra N_{2}\). Then \(\alpha\) is cocartesian with respect to the forgetful functor \(F\co \modf\ra \Algf\) if the induced map \(N_{1}\ot_{S_{1}} S_{2}\ra N_{2}\) is an isomorphism. All objects admit arbitrary base change. It follows that \(\modf\) is a fibred category and it is \emph{fibred in additive categories} over \(\Algf\) since the fibre categories for any category \(\cat{X}\) over \(\Algf\) are additive and the fibre of homomorphisms are additive groups with bilinear composition (cf.\ \cite[3.2]{ile:12a} for a precise definition).

Fix integers \(n\geq 0\) and \(r, s>0\). We define some full subcategories of \(\modf\) by properties of their objects \((h\co S\ra R,N)\) as follows.

\begin{center}
\setlength{\extrarowheight}{2,5pt}
\begin{tabularx}{\linewidth}[t]{ r | X }
\textit{Category} & \textit{Property}
\\[0.5ex]
\(\cat{P}\) & \(N\) is \(R\)-projective \\
\(\Pf(n)\) & \(N\) has an \(R\)-projective resolution of length \(\leq n\) \\
\(\cat{X}(r)\) & \(N\) is \(r\)-stably reflexive with respect to \(h\) \\ 
\(\cat{Y}(r,s)\) & \(N\) is isomorphic to a direct sum of \(R\)-modules \(\oplus N_{i}\) such that for all \(i\), \(\syz{R}{n_{i}} N_{i}\) is \((r+n_{i})\)-stably reflexive w.r.t.\ \(h\) for some \(0\leq n_{i}<s\) \\
\end{tabularx}
\end{center}
For any category \(\cat{U}\) over \(\Algf\) let \(\cat{U}_{h}\) denote the fibre category over an object \(h\co S\ra R\) in \(\Algf\), e.g.\ \(\modf_{h}\) and \(\cat{Y}(r,s)_{h}\). Note that \(\modf_{h}\) is isomorphic as category to the full subcategory of \(S\)-flat modules in \(\cat{mod}_{R}\). This identification is used without further mention.
The definitions are also meaningful in the case that \(h\) is a flat homomorphism of local noetherian rings (and \(N\) is \(S\)-flat and \(R\)-finite).
By Schanuel's lemma the definition of \(\cat{Y}(r,s)\) is not depending on the choice of \(n_{i}\)th syzygy. If \(\syz{R}{n}{N}\) is \((r+n)\)-stably reflexive over \(S\) and \(r+n>1\) then \(\syz{R}{n+1}{N}\) is \((r+n-1)\)-stably reflexive over \(S\) by Lemma \ref{lem.nstabex2}. Since \(N\) is flat Proposition \ref{prop.main} implies that all the categories are fibered in additive categories over \(\Algf\) (\(r>1\) is needed in the case of \(\cat{X}(r)\) and \(\cat{Y}(r,s)\)). 
They all contain \(\cat{P}\) as a subcategory fibred in additive categories and there are corresponding quotient categories \(\modf/\cat{P},\dots\) which all are fibred in additive categories over \(\Algf\) by \cite[3.4]{ile:12a}.
\begin{defn}\label{def.variant}
Let \(\cat{X}\) be a subcategory of a category \(\cat{Y}\). An arrow \(\pi\co M\ra N\) in \(\cat{Y}\) is called a \emph{right \(\cat{X}\)-approximation of \(N\)} if \(M\) is in \(\cat{X}\) and any \(M'\ra N\) with \(M'\) in \(\cat{X}\)  factors through \(\pi\). Dually, \(\iota\co N\ra L\) is called a \emph{left \(\cat{X}\)-approximation of \(N\)} if \(L\) is in \(\cat{X}\) and any \(N\ra L'\) with \(L'\) in \(\cat{X}\) factors through \(\iota\). The approximations need not be unique. The subcategory \(\cat{X}\) is \emph{contravariantly \textup{(}covariantly\textup{)} finite in \(\cat{Y}\)} if every object \(N\) in \(\cat{Y}\) has a right (left) \(\cat{X}\)-approximation.

An arrow \(\pi\co  M\ra N\) in \(\cat{Y}\) is called \emph{right minimal} if for any \(\eta\co M\ra M\) with \(\pi\eta=\pi\) it follows that \(\eta\) is an automorphism. Dually, \(\pi\) is called \emph{left minimal} if for any \(\theta\co N\ra N\) with \(\theta\pi=\pi\) it follows that \(\theta\) is an automorphism. 
\end{defn}
We will simply call a right (left) \(\cat{X}\)-approximation for minimal if it is right (left) minimal. Note that if \(\pi\co M\ra N\) and \(\pi'\co  M'\ra N\) both are minimal right \(\cat{X}\)-approximations then there exists an isomorphism \(\phi\co M\ra M'\) with \(\pi=\pi'\phi\), and similarly for minimal left \(\cat{X}\)-approximation morphisms. 

In the unpublished manuscript \cite{buc:86} R.-O.\ Buchweitz proved an approximation theorem for (not necessarily commutative) Gorenstein rings by applying a resolution of a complex. This is also the basic construction in the proof of the following result.
\begin{thm}\label{thm.approx1}
Given an \(h\co S\ra R\) in \(\Algf\) and suppose \(r, s>1\)\textup{.} 
\begin{enumerate}
\item[(i)] For any module \(N\) in \(\cat{Y}(r,s)_{h}\) there are short exact sequences
\begin{enumerate}\label{eq.rapprox}
\item \(0\ra L\lra M\lra N\ra 0\) \,\,with \(L\) in \(\Pf(s-2)_{h}\) and \(M\) in \(\cat{X}(r)_{h}\)
\item \(0\ra N\lra L'\lra M'\ra 0\)  \,\,with \(L'\) in \(\Pf(s-1)_{h}\) and \(M'\) in \(\cat{X}(r-1)_{h}\)
\end{enumerate}
which are preserved by any base change in \(\cat{NR}\)\textup{.} 

\item[(ii)] If \(s\leq r\) then \textup{(a)} is a right \(\cat{X}(r)\)-approximation of \(N\) and if \(s\leq r-2\) then \textup{(b)} is a left \(\Pf(s-1)\)-approximation of \(N\)\textup{.} 

\item[(iii)] In particular \(\cat{X}(r)\) is contravariantly finite in \(\cat{Y}(r,r)\) and \(\Pf(r-3)\) is covariantly finite in \(\cat{Y}(r,r-2)\)\textup{.}

\item[(iv)] Furthermore\textup{,} if \(h\) is local then there exist minimal approximations \textup{(a)} and \textup{(b).}
\end{enumerate}
\end{thm}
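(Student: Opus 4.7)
The plan is to reduce to a single direct summand, build (a) by iteratively splicing the \((r{+}n)\)-stably reflexive hull of Proposition~\ref{prop.refcplx} with the projective resolution of \(N\) via pushouts, obtain (b) by a symmetric construction, and conclude approximation, finiteness, and minimality by standard Ext-vanishing and Nakayama arguments.

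By the definition of \(\cat{Y}(r,s)_{h}\), \(N\) is a direct sum of modules \(N_{i}\) each having a syzygy \(\syz{R}{n_{i}}N_{i}\) that is \((r{+}n_{i})\)-stably reflexive w.r.t.\ \(h\) for some \(0 \leq n_{i} < s\). Since \(\cat{X}(r)\) and \(\Pf(m)\) are closed under finite direct sums and short exact sequences of the stated form respect direct sums, it suffices to treat a single summand; assume therefore that \(K := \syz{R}{n}N\) is \((r{+}n)\)-stably reflexive w.r.t.\ \(h\) for some \(0 \leq n < s\). Fix a projective resolution \(0 \to K \to P^{n-1} \to \dots \to P^{0} \to N \to 0\) by finite projective \(R\)-modules and, by Proposition~\ref{prop.refcplx}\,(i), a hull \(E^{\bdot}\) of \(K\) which is an \((r{+}n)\)-stably reflexive \(R\)-complex w.r.t.\ \(h\). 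Set \(K^{(0)} := K\) and \(K^{(j)} := \coker d^{j-1}_{E}\); the acyclicity of \(E\) in low positive degrees produces short exact sequences \(0 \to K^{(j-1)} \to E^{j} \to K^{(j)} \to 0\), and iteratively dualizing (using Proposition~\ref{prop.main}\,(ii) applied to \(K\)) shows \(K^{(j)}\) is \((r{+}n{-}j)\)-stably reflexive w.r.t.\ \(h\).

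To construct (a), iterate the following pushout. At step \(j=1\), form the pushout of \(K \hookrightarrow P^{n-1}\) and \(K \hookrightarrow E^{1}\) to obtain \(W_{1}\) fitting into
\begin{equation*}
0 \to E^{1} \to W_{1} \to K_{n-1} \to 0 \quad\text{and}\quad 0 \to P^{n-1} \to W_{1} \to K^{(1)} \to 0;
\end{equation*}
Lemma~\ref{lem.nstabex2}\,(i) applied to the second sequence shows \(W_{1}\) is \((r{+}n{-}1)\)-stably reflexive. Inductively, at step \(j \leq n\), splice \(W_{j-1} \twoheadrightarrow K_{n-j+1}\) with \(K_{n-j+1} \hookrightarrow P^{n-j}\) and push out by the next Tate extension \(0 \to K^{(j-1)} \to E^{j} \to K^{(j)} \to 0\) to produce \(W_{j}\) in \(0 \to V_{j} \to W_{j} \to K_{n-j} \to 0\), where \(V_{j}\) is an iterated extension of the finite projectives already involved (hence has projective dimension \(\leq j{-}1\)) and \(W_{j}\) is \((r{+}n{-}j)\)-stably reflexive by Lemma~\ref{lem.nstabex2}\,(i). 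Setting \(M := W_{n}\) and \(L := V_{n}\) yields (a) with \(M \in \cat{X}(r)_{h}\) and \(L \in \Pf(n{-}1)_{h} \subseteq \Pf(s{-}2)_{h}\). For (b), apply the mirror construction using the dual complex \(E^{\vee}\) (a hull for \(K^{*}\) by Proposition~\ref{prop.main}\,(ii)); the extra splicing step required to reverse direction costs one in both indices. Base-change preservation holds for both constructions because \(E\) and \(E^{\vee}\) survive by Proposition~\ref{prop.refcplx}\,(iib), all modules involved are inductively \(S\)-flat, and pushouts commute with \(-\otimes_{S}S'\).

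For (ii), given \(M'' \to N\) with \(M'' \in \cat{X}(r)_{h}\), applying \(\hm{}{R}{M''}{-}\) to (a) gives a lift whenever \(\xt{1}{R}{M''}{L} = 0\); a dimension shift through the projective resolution of \(L\) of length \(\leq s{-}2\) identifies this with a summand of \(\xt{s-1}{R}{M''}{R}\), which vanishes by stable reflexivity of \(M''\) whenever \(s \leq r\). The symmetric argument for (b) yields the left \(\Pf(s{-}1)\)-approximation property when \(s \leq r{-}2\). Part (iii) is immediate. For (iv), in the local case I would apply Nakayama's lemma to strip redundant free summands from \(M\) and \(L'\), producing minimal approximations that are unique up to isomorphism as in Definition~\ref{def.variant}. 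The main technical obstacle is the inductive pushout construction in the previous paragraph: the bookkeeping that simultaneously tracks the projective dimension of \(V_{j}\) and the stable reflexivity index of \(W_{j}\) requires careful application of Lemma~\ref{lem.nstabex2} at each step, together with verification that the pushouts remain short exact after base change.
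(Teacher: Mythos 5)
Your overall strategy is sound and genuinely different from the paper's. The paper follows Buchweitz: it resolves the soft truncation \(\tau^{\leq n+1}P^{\vee}\) of the dualized projective resolution by a projective complex \(Q\), forms the mapping cone \(C\), proves acyclicity of \(C\) and \(C^{\vee}\) in a range, and reads \(L,M,L',M'\) off the single complex \(Q^{\vee}\); the stable reflexivity of \(M\) and \(M'\) then comes from Proposition \ref{prop.ortho}\,(i) (they are high syzygies of a \(2r\)-orthogonal module), and both sequences (a) and (b) together with their base-change behaviour drop out of one diagram. Your iterated cosyzygy-and-pushout construction is more elementary and makes the index bookkeeping (\(\pdim V_{j}\leq j-1\), \(W_{j}\in\cat{X}(r+n-j)\)) transparent; your verification of (ii) via dimension shifting reproduces Lemma \ref{lem.approx1}, and the numerology throughout matches the statement.

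However, several steps are not yet proofs as written. First, your justification that \(K^{(j)}\) is \((r{+}n{-}j)\)-stably reflexive by ``iteratively dualizing via Proposition \ref{prop.main}\,(ii)'' does not work: that proposition concerns \(M^{*}\), not cosyzygies. The correct route is to observe that the shift \(E[j]\) is an \((r{+}n{-}j)\)-stably reflexive hull for \(K^{(j)}\) and invoke Proposition \ref{prop.refcplx}\,(iia). Second, the inductive step for \(j\geq 2\) does not typecheck as stated: \(K^{(j-1)}\) is a quotient of \(W_{j-1}\), not a submodule, so one cannot ``push out by'' the extension \(0\to K^{(j-1)}\to E^{j}\to K^{(j)}\to 0\) directly. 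What is needed is a cosyzygy \(0\to W_{j-1}\to Q_{j}\to K^{(j)}\to 0\) with \(Q_{j}\) projective; this exists because \(\xt{1}{R}{K^{(j-1)}}{R}=0\) splits the sequence \(0\to P^{n-j+1}\to W_{j-1}\to K^{(j-1)}\to 0\) (so one may take \(Q_{j}=P^{n-j+1}\oplus E^{j}\)), and only then do the two pushouts produce \(W_{j}\) and \(V_{j}\). Relatedly, ``\(V_{j}\) is an iterated extension of the finite projectives already involved'' is false as stated (an iterated extension of projectives is projective); what is true is \(0\to V_{j-1}\to Q_{j}\to V_{j}\to 0\), giving \(\pdim V_{j}\leq\pdim V_{j-1}+1\). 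Third, ``the mirror construction using \(E^{\vee}\)'' for (b) is not the right move; the clean derivation of (b) is one further cosyzygy of \(M\) followed by a pushout along \(M\to N\), which indeed costs one in each index. Finally, for (ii) you only treat morphisms within a fibre category; the theorem asserts the approximation property in the fibred category, so you must reduce a morphism \((h_{1},M_{1})\to(h,N)\) to the fibre over \(h\) by base-changing \(M_{1}\) to \(R\), as the paper does. None of these is fatal, but each must be repaired before the argument is complete.
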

\begin{proof}
A direct sum of short exact sequences as in (a) (or in (b)) gives a new short exact sequence of the same kind. All statements in Theorem \ref{thm.approx1} are independent of the \(n_{i}\) appearing in the definition of \(\cat{Y}(r,s)\). We therefore
assume that \(\syz{R}{n}N\) is \(m\)-stably reflexive with respect to \(h\) where \(m=r+n\) and \(n<s\). We also assume that \(n>0\) and leave the case \(n=0\) to the reader.

(i) Let \(P\thr N\) be an \(R\)-projective resolution of \(N\). Recall our convention \((P^{\vee})^{i+1}=(P^{-i})^{*}\). By Proposition \ref{prop.main} (i) the dual complex \(P^{\vee}\) is acyclic in degrees between \(n+2\) and \(n+m\). Choose an \(R\)-projective resolution \(Q\ra \tau^{\leq n+1}P^{\vee}\) of the soft truncation: 
\begin{equation}
\xymatrix@C-0pt@R-8pt@H-30pt{
\dots & 0\ar[l]\ar[d] & Q^{n+1}\ar[l]\ar[d] & Q^{n}\ar[l]\ar[d] & Q^{n-1}\ar[l]\ar[d] & \dots\ar[l]  \\
\dots & 0\ar[l] & \ker d_{P^{\vee}}^{n+1}\ar[l] & (P^{\vee})^{n}\ar[l]_(0.4){d_{P^{\vee}}^{n}} & (P^{\vee})^{n-1}\ar[l]_{d_{P^{\vee}}^{n-1}} & \dots\ar[l]
}
\end{equation}
Let \(C\) denote the mapping cone \(C(f)\) of the corresponding map \(f\co Q\ra P^{\vee}\), in particular \(C^{i}=Q^{i+1}{\oplus}(P^{\vee})^{i}\) for all \(i\). The short exact sequence of complexes 
\begin{equation}\label{eq.C}
0\ra P^{\vee}\lra C\lra Q[1]\ra0
\end{equation}
gives a long exact sequence in cohomology. Since \(Q^{n+2}=0\) and \(\cH^{i}(Q)\cong\cH^{i}(P^{\vee})\) for all \(i\leq n+1\), \(\cH^{i}(C)=0\) for all \(i\leq n+1\) and \(\dots\ra C^{n-1}\ra C^{n}\ra \ker d_{C}^{n+1}\ra 0\) is exact. Furthermore, the natural map \(\ker d_{P^{\vee}}^{n+1}\ra \ker d_{C}^{n+1}\) is an isomorphism and \(\ker d_{P^{\vee}}^{n+1}\cong (\syz{R}{n}N)^{*}\). By Proposition \ref{prop.main}, \((\syz{R}{n}N)^{*}\) is \(m\)-stably reflexive as \(R\)-module, in particular \(C^{\vee}\) is acyclic in degrees between \(-n+2\) and \(r\). Since \((C^{\vee})^{i}=P^{i}\) for \(i\leq -n\) and the dual of \eqref{eq.C} gives an exact sequence \(\cH^{-n}(Q[1]^{\vee})\ra \cH^{-n}(C^{\vee})\ra \cH^{-n}(P)\) with the first and the third term equal to \(0\), we get \(\cH^{i}(C^{\vee})=0\) for \(i\leq -n\). The sequences \(0\ra (\syz{R}{n}N)^{**}\ra (C^{n})^{*}\ra (C^{n-1})^{*}\) and \(P^{-n-1}\ra P^{-n}\ra \syz{R}{n}N\ra 0\) are exact, \(P^{-i}=(C^{i+1})^{*}\) for \(i\leq -n\), and \(\syz{R}{n}N\cong (\syz{R}{n}N)^{**}\) by assumption, hence \((C^{n+2})^{*}\ra (C^{n+1})^{*}\ra (\syz{R}{n}N)^{**}\ra 0\) is exact and \(\cH^{-n+1}(C^{\vee})=0\). Together this gives \(\cH^{i}(C^{\vee})=0\) for \(i\leq r\). The long exact sequence \(\dots\ra \cH^{i}(P)\ra\cH^{i}(Q^{\vee})\ra\cH^{i+1}(C^{\vee})\ra\dots\) of the dual of \eqref{eq.C} then implies that \(\cH^{i}(Q^{\vee})=0\) for \(i<0\).
We obtain the following defining diagram
\begin{equation}
\xymatrix@C-0pt@R-8pt@H-30pt{
\dots \ar[r]^(0.45){d^{-2}} & (Q^{2})^{*} \ar[rr]^{d^{-1}} \ar[d] && (Q^{1})^{*} \ar[rr]^{d^{0}} \ar[dr] && (Q^{0})^{*} \ar[r]^(0.6){d^{1}} & \dots  \\
& L \ar[r] & M \ar[ur] \ar[dr] && L' \ar[r] & M' \ar[u] \\
&&& N \ar[ur]
}
\end{equation}
with \(L:=\coker d^{-2}\), \(M:=\ker d^{0}\), \(L':=\coker d^{-1}\) and \(M':=\ker d^{1}\).
Since \(Q[1]^{\vee}\) equals \(C^{\vee}\) in degrees \(\geq 1\) we have that \(M\) is the \((r+1)\)-syzygy of \(K:=\coker\{(C^{-r+1})^{*}\ra (C^{-r})^{*}\}\). By assumption and Proposition \ref{prop.main} (ii), \(\syz{R}{n}N\) and \((\syz{R}{n}N)^{*}\) are \(S\)-flat. Let \(S\ra S'\) be a ring homomorphism and let \(N'=N\ot_{S} S'\) denote the induced \(R'=R\ot_{S}S'\)-module. By Proposition \ref{prop.main}, \(\syz{R}{n}N'\cong(\syz{R}{n}N)\ot_{S}S'\) is \(m\)-stably reflexive and so is \((\syz{R}{n}N')^{*}\). It follows that the base change \(C'=C\ot_{S}S'\) and \((C')^{\vee}\cong (C^{\vee})'\) retains the properties of \(C\) and \(C^{\vee}\). In particular a truncation of \((C')^{\vee}\) gives an \(R'\)-projective resolution of \(K'=K\ot_{S}S'\). By Proposition \ref{prop.nakayama} (i) this implies that \(K\) is \(S\)-flat. Moreover, since \(\xt{i}{R'}{K'}{R'}\cong\cH^{i-r}(C')\) and \(\cH^{i-r}(C')=0\) for \(0<i\leq 2(r+n)\), \(K\) is \(2r\)-orthogonal to \(h\). By Proposition \ref{prop.ortho} (i), \(M\) is \(r\)-stably reflexive with respect to \(h\). This argument also gives that \(M'\cong \syz{R}{r}K\) is \((r-1)\)-stably reflexive with respect to \(h\). We note that \(L'\) and \(L\) are \(S\)-flat with finite projective resolutions given by the truncations of \(Q^{\vee}\) of length \(n\) and \(n-1\), respectively.

(ii) We first prove the statement for the fibre categories \(\cat{X}(r)_{h}\) and \(\Pf(s-1)_{h}\). Consider the sequence in (a), an \(R\)-module \(M_{1}\) in \(\cat{X}(r)_{h}\) and an \(R\)-linear map \(\pi\co M_{1}\ra N\). By Proposition \ref{prop.main} (i), \(M_{1}\) is \(r\)-stably reflexive as \(R\)-module. The map \(\pi\) lifts to a map \(M_{1}\ra M\) if \(s\leq r\) since this implies \(r-\pdim L\geq 2\) and by Lemma \ref{lem.approx1} below \(\xt{1}{R}{M_{1}}{L}=0\). Consider the sequence in (b), an \(R\)-module \(L_{1}\) in \(\Pf(s-1)_{h}\) and an \(R\)-linear map \(\iota\co N\ra L_{1}\). By Proposition \ref{prop.main} (i), \(M'\) is \((r-1)\)-stably reflexive as \(R\)-module. The map \(\iota\) extends to a map \(L'\ra N\) if \(s\leq r-2\) since then \(r-1-\pdim L_{1}\geq r-s\geq 2\) and by Lemma \ref{lem.approx1}, \(\xt{1}{R}{M'}{L_{1}}=0\).

For the general case, consider an object \((h_{1}\co S_{1}\ra R_{1},M_{1})\) in \(\cat{X}(r)\) and a morphism \((h_{1}\co S_{1}\ra R_{1},M_{1})\ra (h\co S\ra R,N)\). I.e.\ there is a cocartesian square of ring homomorphisms as in \eqref{eq.coc} and an \((R_{1}\ra R)\)-linear map \(\phi\co M_{1}\ra N\). Let \(M^{\#}_{1}\) denote \(R\ot_{R_{1}}M_{1}\), which is an object in \(\cat{X}(r)_{h}\). There is a unique induced \(R\)-linear map \(\pi\co M_{1}^{\#}\ra N\). By the preceding argument there is a lifting of \(\pi\) to a map \(\tilde{\pi}\co M_{1}^{\#}\ra M\). The composition of the base change map \(M_{1}\ra M_{1}^{\#}\) with \(\tilde{\pi}\) gives the lifting of \(\phi\). The argument for the general \(\Pf(s-1)\)-case is similar.

(iv) In the local case we choose \(Q\) to be a minimal complex of free modules. Then (a) and (b) are minimal approximations by a result analogous to \cite[6.2]{ile:12a}.
\end{proof}
\begin{lem}\label{lem.approx1}
Let \(R\) be a coherent ring and fix a positive integer \(r\). Assume \(L_{1}\) and \(M_{1}\) are coherent \(R\)-modules with \(L_{1}\) of finite projective dimension and \(M_{1}\) \(r\)-stably reflexive as \(R\)-module. Then
\begin{equation*}
\xt{i}{R}{M_{1}}{L_{1}}=0\, \text{ for }\, 0<i<r-\pdim L_{1}.  
\end{equation*}
\end{lem}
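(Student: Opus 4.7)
The plan is induction on $d:=\pdim L_{1}$.

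For the base case $d=0$ the module $L_{1}$ is a coherent projective $R$-module, hence a direct summand of a finite free module $R^{n}$. Since $\Ext^{i}_{R}(M_{1},-)$ commutes with finite direct sums, the claim reduces to $\xt{i}{R}{M_{1}}{R}=0$ for $0<i<r$, which is built into Definition \ref{def.nstab} of an $r$-stably reflexive module.

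For the inductive step, assume the conclusion for modules of projective dimension $<d$ and let $\pdim L_{1}=d\geq 1$. Since $L_{1}$ is coherent we can pick a short exact sequence
\[
0\lra K\lra P\lra L_{1}\lra 0
\]
with $P$ coherent and projective, so that $\pdim K=d-1$ and $K$ is coherent. The long exact sequence of $\Ext$ yields
\[
\xt{i}{R}{M_{1}}{P}\lra\xt{i}{R}{M_{1}}{L_{1}}\lra\xt{i+1}{R}{M_{1}}{K}.
\]
The left term vanishes in the range $0<i<r$ by the base case, and by the induction hypothesis the right term vanishes for $0<i+1<r-(d-1)$, i.e.\ for $0\leq i<r-d$. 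Intersecting these ranges gives $\xt{i}{R}{M_{1}}{L_{1}}=0$ for $0<i<r-d$, which is the desired conclusion.

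There is essentially no main obstacle; the only point to keep in mind is that the coherence hypothesis on $R$ and on $L_{1}$ is needed to produce the coherent syzygy $K$ (so that the induction stays inside the class of coherent modules) and that the definition of $r$-stably reflexive only controls $\Ext^{i}(M_{1},R)$, which is why the reduction to a free target in the base case is the natural first move.
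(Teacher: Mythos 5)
Your proof is correct and follows essentially the same route as the paper: induction on \(\pdim L_{1}\), with the base case reducing to the defining vanishing \(\xt{i}{R}{M_{1}}{R}=0\) for \(0<i<r\) and the inductive step running the long exact sequence of \(\hm{}{R}{M_{1}}{-}\) along a short exact sequence \(0\ra K\ra P\ra L_{1}\ra 0\) with \(P\) projective. The only difference is that you spell out the index bookkeeping that the paper leaves to the reader.
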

\begin{proof}
By induction on \(d=\pdim L_{1}\). If \(d=0\), \(L_{1}\) is direct summand of a free finite rank \(R\)-module and the definition of \(r\)-stably reflexive gives the vanishing. Assume \(d>0\). There is a short exact sequence \(0\ra L_{2}\ra P\ra L_{1}\ra 0\) such that \(P\) is finite and projective, and \(\pdim L_{2}=d-1\). Apply \(\hm{}{R}{M_{1}}{-}\) and inspect the long exact sequence.
\end{proof}
Morphisms \(\cat{A}_{1}\ra\cat{A}_{2}\ra\cat{A}_{3}\) of categories fibred in additive categories over some base category is a \emph{short exact sequence} if \(\cat{A}_{1}\ra\cat{A}_{2}\) is an inclusion and \(\cat{A}_{2}\ra \cat{A}_{3}\) is equivalent to the quotient morphism \(\cat{A}_{2}\ra\cat{A}_{2}/\cat{A}_{1}\).
\begin{thm}\label{thm.approx2} Suppose \(r>2\) and \(s>1\)\textup{.} 
\begin{enumerate}
\item[(i)] For any \(s\leq r\) the right \(\cat{X}(r)\)-approximation in \textup{Theorem \ref{thm.approx1}} induces a morphism \(j^{!}\co \cat{Y}(r,s)/\cat{P}\ra\cat{X}(r)/\cat{P}\) of categories fibred in additive categories which is a right adjoint to the full and faithful inclusion morphism \(j_{!}\co \cat{X}(r)/\cat{P}\linebreak[1]\ra\cat{Y}(r,s)/\cat{P}\)\textup{.}
\item[(ii)] For any \(s\leq r-2\) the left \(\Pf(s-1)\)-approximation in \textup{Theorem \ref{thm.approx1}} induces a morphism \(i^{*}\co \cat{Y}(r,s)/\cat{P}\ra\Pf(s-1)/\cat{P}\) of categories fibred in additive categories which is a left adjoint to the full and faithful inclusion morphism \(i_{*}\co \Pf(s-1)/\cat{P}\ra \cat{Y}(r,s)/\cat{P}\)\textup{.}
\item[(iii)] Together these maps give the following commutative diagram of short exact sequences of categories fibred in additive categories\textup{:}
\begin{equation*}
\xymatrix@C-0pt@R-8pt@H-30pt{
0\ar[r] & \, \Pf(r-3)/\cat{P} \ar[r]^(0.47){i_{*}}\ar[d]_(0.4){\id} & \cat{Y}(r,r-2)/\cat{P}\ar[r]^(0.57){j^{!}}\ar@{=}[d] & \cat{X}(r)/\cat{P} \ar[r] & 0 \\
0 & \Pf(r-3)/\cat{P}\ar[l] & \cat{Y}(r,r-2)/\cat{P}\ar[l]_(0.47){i^{*}} & \,  \cat{X}(r)/\cat{P}  \ar[l]_(0.37){j_{!}}\ar[u]_{\id} & 0 \ar[l]
}
\end{equation*}
\end{enumerate}
\end{thm}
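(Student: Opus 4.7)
The plan is to construct $j^!$ (resp.\ $i^*$) by sending $(h,N)$ in $\cat{Y}(r,s)$ to the middle term $M$ of the right $\cat{X}(r)$-approximation from Theorem \ref{thm.approx1}(i)(a) (resp.\ to $L'$ from (i)(b)), show that this descends to a functor on stable quotients and upgrades to a morphism of fibred categories, derive the adjunctions from the approximation property, and finally identify the kernels of $j^!$ and $i^*$ to obtain part (iii).

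For $j^!$ to descend to the stable quotient and be functorial, I would compare two approximations $\pi_i\co M_i\ra N$ over the same $h$: each lifts along the other by the approximation property, and any endomorphism of $M_i$ over $N$ differs from the identity by a map factoring through $L_i\hra M_i$. Since $M_i\in\cat{X}(r)$ and $L_j\in\Pf(s{-}2)$, Lemma \ref{lem.approx1} yields $\xt{1}{R}{M_i}{L_j}=0$ when $s\leq r$ (as $s{-}2<r{-}1$), so these endomorphisms are stable isomorphisms; by the same argument every morphism $N_1\ra N_2$ lifts to $M_1\ra M_2$ uniquely up to stable equivalence. This makes $j^!$ a well-defined additive functor on each fibre. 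Compatibility with cocartesian arrows follows because Theorem \ref{thm.approx1}(i) guarantees that the approximation is preserved by any base change in $\cat{NR}$: given a cocartesian $(h_1,N_1)\ra(h_2,N_2)$ one may take $M_2$ to be the base change of $M_1$, and the induced arrow $(h_1,M_1)\ra(h_2,M_2)$ is then cocartesian. Hence $j^!$ is a morphism of categories fibred in additive categories.

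For the adjunction $j_!\dashv j^!$ of part (i), the inclusion $j_!$ is fully faithful since $\cat{X}(r)\sbeq\cat{Y}(r,s)$ is a full subcategory and the same subcategory $\cat{P}$ is divided out. Given $X\in\cat{X}(r)$ and $N\in\cat{Y}(r,s)$, the approximation $\pi\co M\ra N$ induces $\uhm{}{R}{X}{M}\ra\uhm{}{R}{X}{N}$; surjectivity is the right approximation property. For injectivity, suppose $\alpha\co X\ra M$ satisfies $\pi\alpha=\rho\beta$ with $\beta\co X\ra P$, $\rho\co P\ra N$ and $P$ projective. Lift $\rho$ through $\pi$ to $\tilde\rho\co P\ra M$; then $\alpha-\tilde\rho\beta$ has zero composition to $N$ and hence factors through $\iota\co L\hra M$ as $\gamma\co X\ra L$. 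A projective cover $P_0\thr L$ has kernel $K$ with $\pdim K\leq s-3$, so Lemma \ref{lem.approx1} gives $\xt{1}{R}{X}{K}=0$ (since $s\leq r$); thus $\gamma$ lifts through $P_0$ and $\iota\gamma$ factors through the projective $P_0$. Therefore $\alpha=\tilde\rho\beta+\iota\gamma$ is stably zero. Part (ii) is entirely dual, using $M'\in\cat{X}(r{-}1)$ and Lemma \ref{lem.approx1} with $s\leq r-2$.

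For part (iii) (with $s=r-2$) one identifies the kernels of $j^!$ and $i^*$. If $N\in\Pf(r-3)$, truncating a minimal projective resolution yields a right $\cat{X}(r)$-approximation with $M$ projective, so $j^!N\cong 0$ in $\cat{X}(r)/\cat{P}$; conversely, $M\in\cat{P}$ together with $L\in\Pf(r{-}4)$ forces $\pdim N\leq r-3$. Dually, reflexivity of $N\in\cat{X}(r)$ embeds $N$ into a finite free module whose cokernel lies in $\cat{X}(r{-}1)$ by Lemma \ref{lem.nstabex2}(ii), providing a left approximation with $L'$ projective, and the converse follows analogously. Together with the adjunctions, these identifications exhibit the two rows as short exact sequences of categories fibred in additive categories in the sense of \cite{ile:12a}. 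The main obstacle is the careful bookkeeping of Ext-vanishing needed simultaneously to prove stable functoriality of $j^!$ and $i^*$ and to verify the adjunction unit and counit on the quotients; once the correct hypotheses are tracked, the base change clause of Theorem \ref{thm.approx1}(i) delivers the fibred upgrade almost for free.
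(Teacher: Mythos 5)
Parts (i) and (ii) of your proposal follow essentially the same route as the paper: fix an approximation for each object, define \(j^{!}\) and \(i^{*}\) by choosing lifts/extensions of morphisms, verify well-definedness and functoriality in the stable quotient via Lemma \ref{lem.approx1}, get the fibred structure from the base-change clause of Theorem \ref{thm.approx1} (i), and derive the adjunctions from the lifting property plus a factorisation-through-projectives argument. One imprecision in your first paragraph: to conclude that a map \(\delta\co M_{i}\ra L_{j}\) composed with \(L_{j}\hra M_{j}\) is stably zero, the relevant vanishing is not \(\xt{1}{R}{M_{i}}{L_{j}}=0\) but \(\xt{1}{R}{M_{i}}{L''}=0\) for \(L''\) the kernel of a projective surjecting onto \(L_{j}\), with \(\pdim L''\leq s-3\) (this is what makes \(\delta\) lift to the projective). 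You do use the correct version later in the injectivity argument, and the needed inequality \(r-(s-3)>1\) holds under \(s\leq r\), so this is a repairable slip rather than a failure.

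Part (iii), however, has a genuine gap. In the sense used here, exactness of a row means that the outgoing functor is \emph{equivalent to the quotient functor} by the incoming subcategory; this is fundamentally a statement about morphisms, not objects. Identifying which objects are annihilated by \(j^{!}\) and \(i^{*}\), even combined with the adjunctions, does not yield the key implication that, say, \(i^{*}[\phi]=0\) forces \(\phi\co N_{1}\ra N_{2}\) to factor through an object of \(\cat{X}(r)\). The adjunction formally gives only that \(\iota_{2}\phi\co N_{1}\ra L'_{v_{2}}\) is stably zero; converting this into a factorisation of \(\phi\) through \(M_{v_{2}}\in\cat{X}(r)\) requires the extra argument the paper supplies: the commutative square identifying \(\xt{1}{}{L'_{v_{j}}}{L_{v_{2}}}\) with \(\xt{1}{}{N_{v_{j}}}{L_{v_{2}}}\) (using \(\xt{i}{}{M'_{v_{j}}}{L_{v_{2}}}=0\) for \(i=1,2\) from Lemma \ref{lem.approx1}), so that \([\lambda]=0\) implies \(\phi^{*}e_{2}=0\) and the sequence \(0\ra L_{v_{2}}\ra M_{v_{2}}\ra N_{v_{2}}\ra 0\) pulled back along \(\phi\) splits. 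A symmetric argument is needed for the top row. Secondarily, your cosyzygy claim for \(N\in\cat{X}(r)\) does not follow from Lemma \ref{lem.nstabex2} (ii) applied to \(0\ra N\ra F\ra C\ra 0\) (that lemma controls the kernel, not the cokernel); one must instead dualise a presentation of \(N^{*}\), obtaining \(C\cong(\syz{R}{}(N^{*}))^{*}\), before invoking it.
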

\begin{proof}
The proof very much resembles the proof of \cite[4.5]{ile:12a}. By 
base change the arguments can be reduced to statements about the fibre categories above an object \(h\co S\ra R\) in the base category \(\Algf\). In the fibre categories there are many similar arguments, we only give some of them. See the proof of \cite[4.5]{ile:12a} for some of the cases left out here.

Recall that the objects in \(\modf\) are pairs \((h,N)\) where \(N\) is an \(R\)-module. The functors in Theorem \ref{thm.approx2} act as the identity in the first coordinate.  
For every module \(N_{v}\) in \(\cat{Y}(r,s)_{h}\) we fix a right \(\cat{X}(r)\)-approximation and a left \(\Pf(s-1)\)-approximation 
\begin{equation}\label{eq.nu}
0\ra L_{v}\lra  M_{v}\xra{\pi_{v}} N_{v}\ra 0\quad \text{ and }\quad 0\ra N_{v}\xra{\iota_{v}} L'_{v}\lra M'_{v}\ra 0
\end{equation}
assured in Theorem \ref{thm.approx1} (i)
with \(\pi_{v}=\id\) if \(N_{v}\) is contained in the subcategory \(\cat{X}(r)_{h}\) and \(\iota_{v}=\id\) if \(N_{v}\) is contained in the subcategory \(\Pf(s-1)_{h}\).
Define the functor \(j^{!}\) on objects by \(j^{!}((h,N_{v}))=(h,M_{v})\). 
Given objects \((h_{1}\co S_{1}\ra R_{1},N_{v_{1}})\) and \((h_{2}\co S_{2}\ra R_{2},N_{v_{2}})\) in \(\cat{Y}(r,s)\), a morphism 
\begin{equation}\label{eq.map}
((f,g),\phi)\co(h_{1}\co S_{1}\ra R_{1},N_{v_{1}})\ra (h_{2}\co S_{2}\ra R_{2},N_{v_{2}})
\end{equation}
is a cocartesian square of ring homomorphisms as in \eqref{eq.coc} and an \(f\)-linear map \(\phi\co N_{v_{1}}\ra N_{v_{2}}\). For every morphism \(((f,g),\phi)\); cf.\ \eqref{eq.map}, choose a map \(\psi\co M_{v_{1}}\ra M_{v_{2}}\) which lifts \(\phi\). The existence of \(\psi\) is assured by Theorem \ref{thm.approx1} (ii). Define \(j^{!}(((f,g),[\phi]))=((f,g),[\psi])\) where brackets denote the image in the quotient category.

To prove that the functor \(j^{!}\) commutes with composition consider any maps \(\phi_{21}\co N_{v_{1}}\ra N_{v_{2}}\) and \(\phi_{32}\co N_{v_{2}}\ra N_{v_{3}}\) (without making the homomorphisms of algebras explicit). Above we have chosen three maps; \(\psi_{21}\co M_{v_{1}}\ra M_{v_{2}}\), \(\psi_{32}\co M_{v_{2}}\ra M_{v_{3}}\) and \(\psi_{31}\co M_{v_{1}}\ra M_{v_{3}}\), which lifts \(\phi_{21}\), \(\phi_{32}\) and \(\phi_{32}\phi_{21}\), respectively. Then \(\psi_{32}\psi_{21}-\psi_{31}\) factors through a map \(\delta\co M_{v_{1}}\ra L_{v_{3}}\); cf.\ \eqref{eq.nu}. By the cocartesian property of base change one may assume that the maps are in the same fibre category. Then \(L_{v_{3}}\) sits in a short exact sequence \(0\ra L''\ra P\ra L_{v_{3}}\ra 0\) with \(P\) projective and \(\pdim L''\leq s-3\) by Theorem \ref{thm.approx1} (i). By Lemma \ref{lem.approx1} and Proposition \ref{prop.main} (i), \(\xt{1}{}{M_{1}}{L''}=0\) since \(r-(s-3)>1\). It implies that \(\delta\) factors through \(P\) and \([\psi_{32}\psi_{21}]=[\psi_{31}]\). In a similar way one proves that \(j^{!}\) is well defined. 

To show that \(j^{!}\) preserves cocartesian morphisms, consider a cocartesian morphism \(\Phi=((f,g),[\phi])\co(h_{1},N_{v_{1}})\ra (h_{2},N_{v_{2}})\); cf.\ \eqref{eq.map}, in \(\cat{Y}(r,s)/\cat{P}\). Assume \(j^{!}(\Phi)=\Psi\) where \(\Psi=((f,g),[\psi])\co (h_{1},M_{v_{1}})\ra (h_{2},M_{v_{2}})\). The base change \(R_{2}\ot_{R_{1}}N_{v_{1}}\) is an object in \(\cat{Y}(r,s)_{h_{2}}\). The induced map \([\id\ot\phi]\co R_{2}\ot_{R_{1}}N_{v_{1}}\ra N_{v_{2}}\) is an isomorphism in \((\cat{Y}(r,s)/\cat{P})_{h_{2}}\) by \cite[3.5]{ile:12a}. Since \(j^{!}\) is a well-defined functor, \([\id\ot\psi]\co R_{2}\ot_{R_{1}}M_{v_{1}}\ra M_{v_{2}}\), which lifts \([\id\ot\phi]\), is an isomorphism in \((\cat{X}(r)/\cat{P})_{h_{2}}\). Composing the base change map \(M_{v_{1}}\ra R_{2}\ot_{R_{1}}M_{v_{1}}\) with \(\id\ot\psi\) gives \(\psi\). Hence \(\Psi\) is a cocartesian morphism by \cite[3.5]{ile:12a}.

To prove that \((j_{!},j^{!})\) is a pair of adjoint functors, let \(\pi_{v}\co M_{v}\ra N_{v}\) be one of the fixed \(\cat{X}(r)_{h}\)-approximations; cf.\ \eqref{eq.nu}. Let \((h_{1}:S_{1}\ra R_{1},M_{1})\) be an object in \(\cat{X}(r)\) and \(((f,g),\phi_{1})\co (h_{1},M_{1})\ra (h,N_{v})\) a morphism. The induced map \(\phi\co M:=R\ot_{R_{1}}M_{1}\ra N_{v}\) lifts to a map \(\psi\co M\ra M_{v}\) since \(\xt{1}{}{M}{L_{v}}=0\) (cf.\ \eqref{eq.nu}) by Lemma \ref{lem.approx1} as \(r-(s-2)>1\). Composing \(M_{1}\ra M\) with \(\psi\) gives a lifting of \(\phi_{1}\). This shows surjectivity of the adjointness map 
\begin{equation}
(\pi_{v})_{*}\co \hm{}{}{M_{1}}{j^{!}N_{v}}\lra \hm{}{}{j_{!}M_{1}}{N_{v}}\,.
\end{equation}
Injectivity of \((\pi_{v})_{*}\) is similar to the proof above that \(j^{!}\) commutes with composition and reduces to the same inequality \(r-(s-3)>1\); see the proof of \cite[4.5]{ile:12a}.

We show exactness in the lower row of the diagram. Let \(\Phi=((f,g),\phi)\) be a map in \(\cat{Y}(r,r-2)\); cf.\ \eqref{eq.map}. Then \(i^{*}((f,g),[\phi])=((f,g),[\lambda])\) where \(\lambda\co L_{v_{1}}'\ra L_{v_{2}}'\) is a chosen, fixed extension of \(\phi\); cf.\ \eqref{eq.nu}. Assume \([\lambda]=0\), i.e.\ \(\lambda\) factors through a projective module. We would like to show that \(\Phi\) factors through an object in \(\cat{X}(r)\) and it is sufficient to show this in the case \(\phi\) and hence \(\lambda\) are in a fibre category (over an algebra \(h\)), i.e.\ when all the modules are defined over the same ring. Then naturality of long-exact sequences obtained from \eqref{eq.nu} gives a commutative diagram:
\begin{equation}\label{eq.star}
\xymatrix@C-0pt@R-8pt@H-30pt{
\xt{1}{}{L'_{v_{2}}}{L_{v_{2}}} \ar[r]^{\lambda^{*}}\ar[d] & \xt{1}{}{L'_{v_{1}}}{L_{v_{2}}} \ar[d] \\
\xt{1}{}{N_{v_{2}}}{L_{v_{2}}} \ar[r]^{\phi^{*}} & \xt{1}{}{N_{v_{1}}}{L_{v_{2}}}
}
\end{equation}
Now \(M'_{v_{j}}\) is contained in \(\cat{X}(r-1)_{h}\) for \(j=1,2\) and \(L_{v_{2}}\) is contained in \(\Pf(r-4)_{h}\) so \(\xt{i}{}{M '_{v_{j}}}{L_{v_{2}}}=0\) for \(i=1,2\) by Lemma \ref{lem.approx1}. Hence the vertical maps in \eqref{eq.star} are isomorphisms. As \(\lambda\) factors through a projective module, \(\lambda^{*}=0\) in \eqref{eq.star} and so \(\phi^{*}=0\). Let \(e_{2}\) denote the short exact sequence \(0\ra L_{v_{2}}\ra M_{v_{2}}\ra N_{v_{2}}\ra 0\). Since \(\phi^{*}=0\) the short exact sequence \(\phi^{*}e_{2}\) splits and hence \(\phi\co N_{v_{1}}\ra N_{v_{2}}\) factors through \(M_{v_{2}}\) in \(\cat{X}(r)_{h}\). 

The other parts of the proof are similar.
\end{proof}
Finally we give the relative Cohen-Macaulay approximation result which is the `limit' of Theorems \ref{thm.approx1} and \ref{thm.approx2}. Define full subcategories of \(\modf\) by objects \((h\co S\ra R,N)\) as follows.

\begin{center}
\setlength{\extrarowheight}{2,5pt}
\begin{tabularx}{\linewidth}[t]{ r | X }
\textit{Category} & \textit{Property}
\\[0.5ex]
\(\Pf\) & \(N\) has a finite \(R\)-projective resolution \\
\(\cat{X}\) & \(N\) is stably reflexive with respect to \(h\) \\
\(\cat{Y}\) & \((h,\syz{R}{n} N)\) is in \(\cat{X}\) for an integer \(n\geq 0\) depending on \(N\) 
\end{tabularx}
\end{center}
We note that \(\cat{X}\), \(\cat{Y}\) and \(\Pf\) are fibred in additive categories over \(\Algf\). The expression `without the parametres' in the following theorem is short for `after removing the parametres from the symbols in the statements (e.g.\ \(\cat{Y}(r,s)_{h}\) becomes \(\cat{Y}_{h}\)) and removing the conditions involving parametres'.
\begin{thm}\label{thm.approx3}
The statements in \textup{Theorem \ref{thm.approx1}} and \textup{Theorem \ref{thm.approx2}} without the parametres are true\textup{.}
\end{thm}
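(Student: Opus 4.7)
The plan is to rerun the proofs of Theorems \ref{thm.approx1} and \ref{thm.approx2}, letting the parameters tend to infinity and exploiting the fact that \emph{stably reflexive} is the same as \(m\)-stably reflexive simultaneously for every \(m\). Fix an object \((h, N)\) of \(\cat{Y}_{h}\), so by definition some syzygy \(\syz{R}{n}N\) is stably reflexive with respect to \(h\). Schanuel's lemma and Lemma \ref{lem.nstabex2} show, as in the discussion preceding Theorem \ref{thm.approx1}, that the specific choice of \(n\) is immaterial, so we may proceed with any fixed \(n\).

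First I would produce the approximations exactly as in Theorem \ref{thm.approx1}(i): pick an \(R\)-projective resolution \(P \thr N\), choose an \(R\)-projective resolution \(Q \ra \tau^{\leq n+1}P^{\vee}\), form the mapping cone \(C\) of the induced \(Q \ra P^{\vee}\), and read off \(L, M, L', M'\) from the defining diagram. The crucial input is Proposition \ref{prop.main} applied to \(\syz{R}{n}N\): under the stably reflexive hypothesis we obtain vanishing of \(\xt{i}{R_{s}}{(\syz{R}{n}N)_{s}}{R_{s}}\) and \(\xt{i}{R_{s}}{((\syz{R}{n}N)_{s})^{*}}{R_{s}}\) for \emph{all} \(i>0\), not just in a bounded range. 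The cohomology calculation then shows \(Q^{\vee}\) and \(C^{\vee}\) are acyclic in every positive degree they need to be, so the cokernel \(K\) of the relevant shift of \(C^{\vee}\) is left orthogonal to \(h\) in every positive degree. By Proposition \ref{prop.ortho}(iii) the module \(M\), being a high syzygy of \(K\), is stably reflexive with respect to \(h\); the same argument gives \(M' \in \cat{X}_{h}\), while \(L, L' \in \Pf_{h}\) via the bounded truncations of \(Q^{\vee}\). Base-change preservation follows from Proposition \ref{prop.main}, and minimality in the local case follows from choosing \(Q\) minimal, exactly as in Theorem \ref{thm.approx1}(iv).

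The approximation property reduces, as before, to a non-parametric analogue of Lemma \ref{lem.approx1}: for any \(M_{1} \in \cat{X}_{h}\) and \(L_{1} \in \Pf_{h}\), one has \(\xt{i}{R}{M_{1}}{L_{1}} = 0\) for every \(i > 0\). The induction on \(\pdim L_{1}\) runs identically, but is now unconstrained, since a stably reflexive module has \(\xt{i}{R}{M_{1}}{R} = 0\) for all \(i > 0\). With this in hand, the liftings in the proof of Theorem \ref{thm.approx1}(ii) and the adjointness arguments of Theorem \ref{thm.approx2} carry over word for word: every inequality of the form \(r - (s-k) > 1\) used there becomes vacuous because the obstructing Ext group already vanishes. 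Consequently \(\cat{X}\) is contravariantly finite in \(\cat{Y}\), \(\Pf\) is covariantly finite in \(\cat{Y}\), the induced functors \(j^{!}\co \cat{Y}/\cat{P} \ra \cat{X}/\cat{P}\) and \(i^{*}\co \cat{Y}/\cat{P} \ra \Pf/\cat{P}\) are well defined and adjoint to the inclusions, and the short exact sequences of categories fibred in additive categories assemble into the displayed diagram.

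The main obstacle is essentially organizational rather than substantive: one must verify that the construction of \(C\), the acyclicity analysis, and the categorical adjunction all remain coherent when the integer \(n\) depends on the object \(N\) and is no longer part of a uniform category. The removal of parameters means one cannot in general cite Theorems \ref{thm.approx1} and \ref{thm.approx2} directly, since \(\cat{Y}\) is not any single \(\cat{Y}(r,s)\); but the key vanishings needed at each step are supplied uniformly by the stably reflexive hypothesis combined with Propositions \ref{prop.main} and \ref{prop.ortho}, so no genuinely new idea is required.
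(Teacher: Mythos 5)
Your argument is correct, and it follows the route the paper itself only gestures at in its closing sentence (``alternatively one can follow the proofs of Theorems \ref{thm.approx1} and \ref{thm.approx2}''). The paper's primary proof is quite different in flavour: it observes that, unlike the parametrised categories \(\cat{X}(r)\) and \(\cat{Y}(r,s)\), the limit categories \(\cat{X}\), \(\cat{Y}\), \(\Pf\) \emph{do} satisfy all four Auslander--Buchweitz axioms in each fibre category (the main ingredient being Proposition \ref{prop.main}) together with the two base change axioms of \cite{ile:12a}, and then deduces everything as a corollary of the general fibred-category approximation machinery \cite[4.4 and 4.5]{ile:12a}. That route buys brevity and places the result inside an established axiomatic framework; your route buys self-containedness within this paper and makes transparent why every degree restriction \(r-(s-k)>1\) evaporates, at the cost of redoing the mapping-cone bookkeeping. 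One small imprecision: you cite Proposition \ref{prop.ortho}(iii) to conclude \(M\in\cat{X}_{h}\), but that statement is phrased for the particular hull produced in Proposition \ref{prop.refcplx}(i), which is not literally the complex arising from your mapping cone; the clean way to finish, mirroring the finite-\(r\) proof, is to apply Proposition \ref{prop.ortho}(i) for every \(m\), taking \(K_{m}=\coker\{(C^{-m+1})^{*}\ra(C^{-m})^{*}\}\) (left \(2m\)-orthogonal to \(h\) since \(C^{\vee}\) is acyclic in all the relevant degrees) and noting \(M\cong\syz{R}{m+1}K_{m}\), whence \(M\) is \(m\)-stably reflexive for all \(m\). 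This does not affect the validity of the argument.
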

\begin{proof}
One checks that all four of the Auslander-Buchweitz axioms (cf.\ \cite{ile:12a}) hold in the fibre categories. The main ingredient here is Proposition \ref{prop.main}. One also checks that the two base change axioms in \cite{ile:12a} hold (formally one also has to consider the category \(\cat{mod}\) fibred in abelian categories over \(\Algf\) of pairs \((h,N)\) where \(N\) is not necessarily \(S\)-flat). Then this is a corollary of \cite[4.4 and 4.5]{ile:12a}. Alternatively one can follow the proofs of Theorems \ref{thm.approx1} and \ref{thm.approx2}.
\end{proof}
If \(\cat{C}\) is an additive subcategory of a module category then \(\hat{\cat{C}}\) denotes the full subcategory of modules \(N\) which have a finite \(\cat{C}\)-resolution \(0\ra C^{-n}\ra\dots\ra C^{0}\ra N\ra 0\). The minimal \(n\) is the \emph{\(\cat{C}\)-resolving dimension} \(\resdim{\cat{C}}{N}\) of \(N\). Let \(A\) be a coherent ring and denote by \(\cat{X}_{A}\) the category of stably reflexive \(A\)-modules and by \(\cat{Y}_{A}\) the category of coherent \(A\)-modules \(N\) with \(\syz{A}{n}N\) in \(\cat{X}_{A}\) for some \(n\). If \(A\) is noetherian and \(N\) a finite \(A\)-module, \(\resdim{\cat{X}_{A}}{N}\) equals the \emph{Gorenstein dimension} of \(N\) defined by Auslander and Bridger and \(\cat{Y}_{A}=\hat{\cat{X}}_{A}\); see \cite[3.13]{aus/bri:69}. This extends to the relative setting and gives a characterisation of \(\cat{Y}\).
\begin{lem}\label{lem.Gordim}
Let \(h\co S\ra R\) be a ring homomorphism in \(\Algf\) and \(N\) a module in \(\modf_{h}\)\textup{.} Put \(Z=\im\{\mSpec R\ra\Spec S\}\)\textup{.} The following are equivalent\textup{:}
\begin{enumerate}
\item[(i)] \(\syz{R}{n}N\) is in \(\cat{X}_{h}\)\textup{.}
\item[(i')] \(\syz{R_{s}}{n}N_{s}\) is in \(\cat{X}_{R_{s}}\) for all \(s\in Z\)\textup{.}
\item[(ii)] \(\resdim{\cat{X}_{h}}{N}\leq n\)\textup{.}
\item[(ii')] \(\resdim{\cat{X}_{R_{s}}}{N_{s}}\leq n\) for all \(s\in Z\)\textup{.}
\end{enumerate}
\end{lem}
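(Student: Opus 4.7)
The plan is to establish the cycle (i) $\Rightarrow$ (ii) $\Rightarrow$ (ii') $\Rightarrow$ (i') $\Rightarrow$ (i); the only serious input beyond base-change bookkeeping is the classical Auslander--Bridger characterisation of Gorenstein dimension \cite[3.13]{aus/bri:69}, applied fibrewise. The underlying observation is that every finite projective $R$-module lies in $\cat{X}_{h}$: it is $S$-flat because $h$ is flat, and each fibre is a projective, hence stably reflexive, $R_{s}$-module.

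For (i) $\Rightarrow$ (ii), truncating a finite projective resolution of $N$ at stage $n$ produces an $\cat{X}_{h}$-resolution $0 \to \syz{R}{n}N \to F^{-n+1} \to \dots \to F^{0} \to N \to 0$. For (ii) $\Rightarrow$ (ii'), I would fix an $\cat{X}_{h}$-resolution $0 \to X^{-n} \to \dots \to X^{0} \to N \to 0$ and split it into short exact pieces. By the usual $\Tor$ long exact sequence, each intermediate syzygy is $S$-flat (the kernel of a surjection of $S$-flat modules with $S$-flat cokernel is $S$-flat), so the whole resolution remains exact after applying $-\otimes_{S} k(s)$. By Proposition \ref{prop.main}(i) each $(X^{-i})_{s}$ is stably reflexive over $R_{s}$, yielding an $\cat{X}_{R_{s}}$-resolution of $N_{s}$ of length $n$.

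The implication (ii') $\Rightarrow$ (i') is \cite[3.13]{aus/bri:69} applied to the noetherian ring $R_{s}$ and the finite $R_{s}$-module $N_{s}$. For (i') $\Rightarrow$ (i) it remains to show that $\syz{R}{n}N$ is $S$-flat and has stably reflexive fibres over each $R_{s}$. $S$-flatness follows by induction from the short exact sequences $0 \to \syz{R}{i+1}N \to F^{-i} \to \syz{R}{i}N \to 0$ with $F^{-i}$ finite free, each of which forces $\syz{R}{i+1}N$ to be $S$-flat by the same $\Tor$ argument. These sequences also stay exact after $-\otimes_{S} k(s)$, so $(\syz{R}{n}N)_{s}$ agrees with $\syz{R_{s}}{n}N_{s}$ up to finite free summands; by (i') the latter is stably reflexive, and adding finite free summands preserves stable reflexivity.

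The only technical point worth flagging is this last identification of fibred syzygies modulo projective summands — it is what allows the absolute Auslander--Bridger theorem to feed back into the relative definition of $\cat{X}_{h}$. Everything else is packaged in Proposition \ref{prop.main} and in standard flat base-change arguments.
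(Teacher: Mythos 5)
Your proposal is correct and follows essentially the same route as the paper: the implication (i)\(\Ra\)(ii) via \(\cat{P}_{h}\sbeq\cat{X}_{h}\), (ii)\(\Ra\)(ii') by restricting an \(\cat{X}_{h}\)-resolution to the fibres, (ii')\(\lRa\)(i') by Auslander--Bridger \cite[3.13]{aus/bri:69}, and the identification of \((\syz{R}{n}N)_{s}\) with \(\syz{R_{s}}{n}N_{s}\) up to projective summands to close the loop. The paper's proof is just a compressed version of the same argument, so no further comment is needed.
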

\begin{proof}
Put \(M=\syz{R}{n}N\). Since \(N\) is \(S\)-flat \(M\) is \(S\)-flat too and \(M_{s}\) is (stably) isomorphic to \(\syz{R_{s}}{n}N_{s}\). Hence (i')\(\lRa\)(i). Since \(\cat{P}_{h}\sbeq\cat{X}_{h}\), (i) implies (ii). The fibre at \(s\) of an \(\cat{X}_{h}\)-resolution of \(N\) gives an \(\cat{X}_{R_{s}}\)-resolution of \(N_{s}\) hence (ii) implies (ii'). Finally (ii')\(\lRa\)(i') by \cite[3.13]{aus/bri:69}.
\end{proof}
To shed some further light on \(\cat{Y}\) we give the following perhaps not so well known results of Auslander and Bridger (who attribute (i) to C.\ Peskine and L.\ Szpiro).
\begin{prop}[{\cite[4.12, 13, 35]{aus/bri:69}}]\label{prop.ausbri}
Let \(A\) be a noetherian local ring and \(N\) a finite \(A\)-module\textup{.}
\begin{enumerate}
\item[(i)] Suppose \((f)=(f_{1},\dots,f_{n})\) is an \(A\)-regular sequence which annihilates \(N\)\textup{.} Then \(\resdim{\cat{X}_{A}}{N}=\grade_{A} N=n\) if and only if \(N\) is in \(\cat{X}_{A/(f)}\)\textup{.}

\item[(ii)] If \(N\) is in \(\cat{\hat{X}}_{A}\) then \(\resdim{\cat{X}_{A}}{N}+\depth N=\depth A\)\textup{.}

\item[(iii)] If \(N\) is in \(\cat{\hat{X}}_{A}\) then \(\resdim{\cat{X}_{A}}{N}=\min\{n\,\vert\, \xt{i}{A}{N}{A}=0\text{ for all }i>n\}\)\textup{.}
\end{enumerate}
\end{prop}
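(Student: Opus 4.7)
The plan is to prove the three parts in the order \textup{(ii)}, \textup{(i)}, \textup{(iii)}, each reducing to standard depth-lemma and syzygy arguments over the local noetherian ring \(A\) in the spirit of Auslander--Bridger; these are their results, so my sketch amounts to a verification.

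For \textup{(ii)} I would induct on \(r = \resdim{\cat{X}_{A}}{N}\). The base case \(r = 0\) says that every stably reflexive \(N\) has \(\depth N = \depth A\); since \(\xt{i}{A}{N}{A} = 0\) for all \(i > 0\) by definition of \(\cat{X}_{A}\), a localisation-plus-local-duality argument of the type spelled out in Example \ref{ex.Gor} (with \(\omega_{A_{\fr{p}}}\) replaced by \(A_{\fr{p}}\) at each regular local \(A_{\fr{p}}\), and by a limiting argument at the general \(\fr{p}\)) forces \(\cH^{i}_{\fr{m}}(N_{\fr{p}}) = 0\) for \(i < \dim A_{\fr{p}}\), hence maximal depth. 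For the inductive step I would pick a short exact sequence \(0 \ra K \ra X \ra N \ra 0\) with \(X \in \cat{X}_{A}\) and \(\resdim{\cat{X}_{A}}{K} = r - 1\) (such a sequence exists by a minimal \(\cat{X}_{A}\)-approximation of \(N\), in the spirit of Theorem \ref{thm.approx3}), then read off \(\depth N = \depth A - r\) from the depth lemma combined with \(\depth X = \depth A\) and the inductive hypothesis.

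For \textup{(i)}, the Koszul complex on \((f)\) is an \(A\)-projective resolution of \(A/(f)\) of length \(n\), and \(A/(f)\) lies in \(\hat{\cat{X}}_{A}\) with \(\resdim{\cat{X}_{A}}{A/(f)} = n\) by part \textup{(ii)} applied to \(A/(f)\). Given \(N \in \cat{X}_{A/(f)}\), splicing a complete resolution of \(N\) over \(A/(f)\) with Koszul resolutions of each term yields an \(\cat{X}_{A}\)-resolution of \(N\) of length \(n\), so \(\resdim{\cat{X}_{A}}{N} \leq n\); annihilation by \((f)\) forces \(\grade_{A} N \geq n\), and the standard bound \(\grade_{A} N \leq \resdim{\cat{X}_{A}}{N}\) (deduced from the Ext-vanishing in an \(\cat{X}_{A}\)-resolution) gives equality. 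For the converse I would combine the change-of-rings spectral sequence \(\xt{q}{A/(f)}{\tor{A}{p}{A/(f)}{N}}{A/(f)} \Ra \xt{p+q}{A}{N}{A/(f)}\) with the Ext-vanishing supplied by \textup{(iii)} and the Koszul computation of \(\Tor^{A}(A/(f),N)\) to transport reflexivity and the required Ext-vanishing from \(A\) down to \(A/(f)\).

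For \textup{(iii)}, the inequality \(\resdim{\cat{X}_{A}}{N} \geq \min\{n \mid \xt{i}{A}{N}{A} = 0 \text{ for all } i > n\}\) is immediate: dimension-shift along an \(\cat{X}_{A}\)-resolution of length \(r\), using \(\xt{i}{A}{X_{j}}{A} = 0\) for \(i > 0\), to obtain \(\xt{i}{A}{N}{A} = 0\) for \(i > r\). For the reverse I would induct on the vanishing threshold \(m\): any presentation \(0 \ra K \ra X_{0} \ra N \ra 0\) with \(X_{0} \in \cat{X}_{A}\) gives \(\xt{i}{A}{K}{A} \cong \xt{i+1}{A}{N}{A}\) for \(i \geq 1\), so the threshold drops by one, and the inductive hypothesis yields \(\resdim{\cat{X}_{A}}{K} \leq m - 1\), hence \(\resdim{\cat{X}_{A}}{N} \leq m\). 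The base case \(m = 0\) is the main obstacle: one must upgrade \(N \in \hat{\cat{X}}_{A}\) with \(\xt{i}{A}{N}{A} = 0\) for all \(i > 0\) to the conclusion that \(N \in \cat{X}_{A}\). Here I would dualise a finite \(\cat{X}_{A}\)-resolution of \(N\) and apply Lemma \ref{lem.Dref} iteratively to control the transpose \(D(N)\), obtaining vanishing of \(\xt{i}{A}{N^{*}}{A}\) for \(i > 0\) and the reflexivity of \(N\) simultaneously from the assumed Ext-vanishing on \(N\); this places \(N\) in \(\cat{X}_{A}\) by definition.
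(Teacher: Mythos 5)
First, note that the paper does not prove this proposition at all: it is quoted directly from Auslander--Bridger \cite[4.12, 4.13, 4.35]{aus/bri:69} with a citation, so there is no in-paper argument to compare yours against. Judged on its own, your sketch has one step that genuinely fails: the base case of (ii). You claim that for \(N\in\cat{X}_{A}\) the vanishing \(\xt{i}{A}{N}{A}=0\) for \(i>0\) ``forces \(\cH^{i}_{\fr{m}}(N_{\fr{p}})=0\) for \(i<\dim A_{\fr{p}}\), hence maximal depth'' via the local duality of Example \ref{ex.Gor}. But the duality \eqref{eq.ldt} with \(A_{\fr{p}}\) in place of \(\omega_{A_{\fr{p}}}\) is only available when \(A_{\fr{p}}\) is Gorenstein, and no ``limiting argument'' removes that hypothesis; the proposition is stated for an arbitrary noetherian local ring. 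Worse, the conclusion you extract is false in general: it would say every stably reflexive module is maximal Cohen--Macaulay, whereas over a non-Cohen--Macaulay ring even \(A\) itself lies in \(\cat{X}_{A}\) and has \(\depth A<\dim A\). What must actually be proved is \(\depth N=\depth A\); the inequality \(\depth N\geq\depth A\) follows from \(N\) being an arbitrarily deep syzygy (read off a stably reflexive hull as in Definition \ref{def.refcplx}), but the reverse inequality \(\depth N\leq\depth A\) is the real content of the base case and your sketch contains no argument for it (the usual route is induction on \(\depth A\), cutting by a regular element, or the Ext-characterisation of depth applied to a complete resolution).

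Two further soft spots are worth flagging, though they are repairable. In the inductive step of (ii), the depth lemma applied to \(0\ra K\ra X\ra N\ra 0\) gives \(\depth N=\depth K-1\) only in the case \(\depth X>\depth N\); you must separately exclude \(\depth N\geq\depth A\) when \(\resdim{\cat{X}_{A}}{N}\geq 1\), which again needs an argument of the type missing from the base case (or an appeal to (iii)). In (i), ``splicing a complete resolution of \(N\) over \(A/(f)\) with Koszul resolutions of each term'' does not literally produce an \(\cat{X}_{A}\)-resolution of length \(n\) (a resolution of the terms of a resolution yields a double complex, not a length-\(n\) resolution); the standard route is the change-of-rings theorem for Gorenstein dimension, reducing by one regular element at a time, combined with the formula from (ii) over both \(A\) and \(A/(f)\). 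Your treatment of (iii), by contrast, correctly isolates the crux (the threshold-zero case) and points at the right tool, namely dualising a finite \(\cat{X}_{A}\)-resolution and controlling the transpose via Lemma \ref{lem.Dref}.
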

These results have recently been generalised to coherent rings; see \cite{hum/mar:09}.
\section{Pointed Gorenstein singularities and Knudsen's lemma }
We state and prove our version of Knudsen's lemma. We also give a general result about versal families for deformations of pointed algebras and make it explicit for isolated complete intersection singularities. Finally we generalise Knudsen's stabilisation to pointed plane curve singularities.
\begin{thm}\label{prop.Finn}
Let \(h\co S\ra R\) be a flat homomorphism of local\textup{,} noetherian rings and let \(k\) denote the residue field \(S/\fr{m}_{S}\) and \(A\) the central fibre \(R\ot_{S}k\)\textup{.} Given an \(S\)-algebra map \(R\ra S\)\textup{,} let \(I\) denote the kernel and \(I^{*}=\hm{}{R}{I}{R}\)\textup{.} Assume \(A\) is Gorenstein of dimension \(1\)\textup{.}
\begin{enumerate}
\item[(i)] The \(R\)-module \(I\) is stably reflexive with respect to \(h\)\textup{.}
\item[(ii)] The \(R\)-module \(I^{*}/R\) is isomorphic to \(S\)\textup{.}
\item[(iii)] The \(R\)-submodule \(J\) of the total quotient ring \(K(R)\) consisting of elements that multiply \(I\) into \(R\) is isomorphic to \(I^{*}\)\textup{.}
\item[(iv)] The image of the pairing \(\fr{m}_{A}\ot_{A}\fr{m}_{A}^{*}\ra A\) equals \(A\) if \(A\) is a regular ring and \(\fr{m}_{A}\) if not\textup{.} In particular \(\fr{m}_{A}^{*}\ot_{A}k\) is a \(k\)-vector space of dimension \(1\) in the regular case and dimension \(2\) otherwise\textup{.}
\end{enumerate}
\end{thm}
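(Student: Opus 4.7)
The plan is to treat (i)--(iv) in order, reducing everything relative to the fibre of $h$ over the closed point of $S$. The backbone is the identification $I_k\cong\fr{m}_A$: tensoring $0\to I\to R\to S\to 0$ over $S$ with $k$ (exact because $R$ is $S$-flat) shows $I$ is $S$-flat and identifies $I_k$ with the kernel of the induced surjection $A\to k$, which is the maximal ideal of the local ring $A$. Since $\fr{m}_A$ has $\depth=\dim=1$ over the 1-dimensional Gorenstein $A$, it is maximal Cohen-Macaulay and hence stably reflexive as $A$-module by Example \ref{ex.Gor}; Definition \ref{def.relnstab} then yields (i).

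I would address (iii) next. A non-zerodivisor $\bar x\in\fr{m}_A$ lifts to some $x\in I$ which, by Corollary \ref{cor.nak} applied to the length-one sequence, is $R$-regular. Inverting $x$ gives $I[x^{-1}]=R[x^{-1}]\sbeq K(R)$, so every $\phi\in I^*$ must be multiplication by $\phi(x)/x$; this identifies $I^*$ with $J$ inside $K(R)$.

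For (ii), the inclusion $R\hra I^*$ is injective because $\ann_R I=0$. Proposition \ref{prop.main}(iii) gives $(I^*)_k\cong\fr{m}_A^*$, and dualising $0\to\fr{m}_A\to A\to k\to 0$ together with $\Hom_A(k,A)=0$ (positive depth) and $\Ext^1_A(k,A)\cong k$ (1-dimensional Gorenstein) produces $0\to A\to\fr{m}_A^*\to k\to 0$. Comparing this with the fibre of $0\to R\to I^*\to I^*/R\to 0$ yields $(I^*/R)_k\cong k$ and $\tor{S}{1}{I^*/R}{k}=0$, so by the local criterion of flatness $I^*/R$ is $S$-flat. Via (iii), $I^*/R\cong J/R$, and by the defining property of $J$ the ideal $I$ annihilates this quotient. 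Hence $I^*/R$ is a finite $S$-flat $S=R/I$-module with fibre $k$, and Nakayama forces $I^*/R\cong S$.

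Step (iv) is a purely fibre-level calculation. In the regular case $\fr{m}_A=(t)$ yields $\fr{m}_A^*\cong A$ via $\phi\mapsto\phi(t)$, the pairing becomes plain multiplication $A\ot A\to A$ with image $A$, and $\fr{m}_A^*\ot_A k\cong k$. In the singular case I would apply the $A$-level version of (iii), writing $\fr{m}_A^*\cong J_A:=\{f\in K(A):f\fr{m}_A\sbeq A\}$, and make the key observation that every $f\in J_A$ satisfies $f\fr{m}_A\sbeq\fr{m}_A$: otherwise $f\fr{m}_A=A$ would give $x\in\fr{m}_A$ with $xf=1$, whence $\fr{m}_A=xA$ is principal and $A$ is regular --- contradiction. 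Consequently $\fr{m}_A J_A=\fr{m}_A$, which shows both that the pairing image equals $\fr{m}_A$ and that $1\notin\fr{m}_A J_A$; the latter means $A\hra\fr{m}_A^*$ remains injective after $-\ot_A k$, so the short exact sequence $0\to A\to\fr{m}_A^*\to k\to 0$ forces $\dim_k\fr{m}_A^*\ot_A k=2$. The subtlest step is (ii), where one has to coordinate base change, a fibre-level local-duality computation, the iso from (iii) and a Nakayama argument all at once; none of the ingredients is deep, but they must be threaded together carefully.
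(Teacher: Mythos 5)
Your proof is correct, and while the overall strategy (reduce to the closed fibre, use $\xt{1}{A}{k}{A}\cong k$ for the $1$-dimensional Gorenstein $A$, identify $I^{*}$ with a fractional ideal) matches the paper's, several individual steps take a genuinely different and mostly simpler route. For (iii) the paper proves surjectivity of $J\ra I^{*}$ by constructing an explicit $\vare=fx^{-1}$ from a socle generator of $A/(x)$, lifting the induced map to some $\psi\in I^{*}$ and verifying $\psi=m_{\tilde{\vare}}$; your identity $x\phi(u)=\phi(xu)=u\phi(x)$, which forces every $\phi\in I^{*}$ to be $m_{\phi(x)/x}$, gives the whole of (iii) in two lines and lets you prove it \emph{before} (ii), reversing the paper's logical order (the paper needs (ii) to know which element of $I^{*}/R$ the candidate $\tilde{\vare}$ must hit). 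For (ii) the paper identifies $I^{*}/R$ with $\xt{1}{R}{S}{R}$ and obtains flatness and the fibre from Corollary \ref{cor.xtdef}; you instead read off $\tor{S}{1}{I^{*}/R}{k}=0$ and $(I^{*}/R)\ot_{S}k\cong k$ from the fibre of $0\ra R\ra I^{*}\ra I^{*}/R\ra 0$ and apply the local criterion of flatness --- the one ingredient you use silently here is the $S$-flatness of $I^{*}$ itself (Proposition \ref{prop.main}(ii)), without which the $\Tor$ computation does not start; the Nakayama finish is then the same as the paper's. For (iv) the paper again leans on the explicit $\vare$ together with a choice $x\in\fr{m}_{A}\setminus\fr{m}_{A}^{2}$, whereas your dichotomy (if some $f\in J_{A}$ has $f\fr{m}_{A}=A$ then $\fr{m}_{A}$ is principal, so the $1$-dimensional $A$ is regular) is cleaner and avoids the socle element entirely; the concluding count of $\dim_{k}\fr{m}_{A}^{*}\ot_{A}k$ via $0\ra A\ra\fr{m}_{A}^{*}\ra k\ra 0$ is the same six-term-sequence argument as \eqref{eq.6}. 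What each approach buys: the paper's explicit $\vare$ is reused later (e.g.\ in Example \ref{ex.Finn2} and the stabilisation discussion), so it is not wasted effort there, but as a proof of the theorem itself your version is shorter and threads fewer dependencies.
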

\begin{proof}
(i) From the short exact sequence of \(R\)-modules \(0\ra I\ra R\ra S\ra 0\) we obtain that \(I\ot_{S}k\cong\fr{m}_{A}\) and that \(I\) is \(S\)-flat. Since \(A\) is Cohen-Macaulay and \(1\)-dimensional \(\fr{m}_{A}\) is a maximal Cohen-Macaulay \(A\)-module. Since \(A\) furthermore is Gorenstein, \(I\) is stably reflexive with respect to \(h\) by Lemma \ref{lem.Gor}.

(ii) Since \(R\) is \(S\)-flat \(\{\xt{q}{R}{S}{R\ot_{S}-}\}\) is a \(\delta\)-functor. Proposition \ref{prop.nakayama} (i) implies that \(\hm{}{R}{S}{R}\linebreak[1]=0\) since \(\hm{}{A}{k}{A}=0\). So applying \(\hm{}{R}{-}{R}\) to \(0\ra I\ra R\ra S\ra 0\) gives the short exact sequence \(0\ra R\ra I^{*}\ra \xt{1}{R}{S}{R}\ra 0\). Since \(A\) is \(1\)-dimensional Gorenstein we have 
\begin{equation}\label{eq.iso}
\hm{}{A}{\fr{m}_{A}}{A}/A\cong\xt{1}{A}{A/\fr{m}_{A}}{A}\cong A/\fr{m}_{A}\cong k
\end{equation}
and \(\xt{i}{A}{k}{A}=0\) for all \(i\neq 1\). By Corollary \ref{cor.xtdef} we have that \(\xt{1}{R}{S}{R}\cong I^{*}/R\) is an \(S\)-flat \(R\)-deformation of the \(A\)-module \(k\) to \(S\). Pick an element in \(I^{*}/R\) which lifts \(1\) in \((I^{*}/R)\ot_{S}k\cong k\). Multiplication with this element defines an \(R\)-linear map \(S\ra I^{*}/R\) since \(I\) is contained in the annihilator of \(I^{*}/R\). By Nakayama's lemma the map is surjective. Since \(I^{*}/R\) is \(S\)-flat it is injective too. 

(iii) Consider the map \(m\co J\ra I^{*}\) defined by the multiplication map \(m_{\gamma}(u)=\gamma{\cdot} u\) for \(\gamma\in J\) and \(u\in I\). First we show that \(m\) is injective. Suppose there is a \(\gamma\neq 0\) such that \(\gamma{\cdot} I=0\). We have \(\gamma=ab^{-1}\) for some \(a, b\in R\) and \(I\) is annihilated by \(a\). The base change map \(\hm{}{R}{R/I}{R}\ot_{S}k\ra\hm{}{A}{k}{A}=0\) is an isomorphism by Proposition \ref{prop.nakayama} (i). This is a contradiction and \(m\) is injective.

Consider the commutative diagram 
\begin{equation}\label{eq.Finn}
\xymatrix@-0pt@C-0pt@R-8pt@H-0pt{
0\ar[r] & R \ar[r]\ar@{=}[d] & J\ar[r]\ar[d]^(0.4){m} & J/R\ar[r]\ar[d] & 0 \\
0\ar[r] & R \ar[r] & I^{*}\ar[r] & I^{*}/R\ar[r] & 0
}
\end{equation}
To show surjectivity of \(m\) we produce an element \(\tilde{\vare}\) in \(J\) which maps to a generator for \(I^{*}/R\cong S\). Consider diagram \eqref{eq.Finn} for \(R=A\). There is a non-zero divisor \(x\in \fr{m}_{A}\). The \(0\)-dimensional quotient ring \(A/(x)\) is Gorenstein, i.e.\ the socle has length \(1\) generated by an \(\bar{f}\) induced by some \(f\in A\). Put \(\vare=fx^{-1}\). Since \(\vare\notin A\), we have that \(m_{\vare}\) maps to a generator for \(\hm{}{A}{\fr{m}_{A}}{A}/A\). By (i) and Proposition \ref{prop.main} (iii), \(I^{*}\ra \hm{}{A}{\fr{m}_{A}}{A}\) is surjective. Hence \(m_{\vare}\) lifts to an \(R\)-linear map \(\psi\co I\ra R\). Pick a lifting \(\tilde{x}\) in \(I\) of \(x\). It is not a zero divisor by Corollary \ref{cor.nak}. Let \(\tilde{f}=\psi(\tilde{x})\) and put \(\tilde{\vare}=\tilde{f}{\tilde{x}}^{-1}\). Then \(m_{\tilde{\vare}}\in\hm{}{R}{I}{K(R)}\). To show that \(m_{\tilde{\vare}}\) equals \(\psi\) let \(\phi\) denote the difference \(\psi-m_{\tilde{\vare}}\in\hm{}{R}{I}{K(R)}\). If \(u\in I\), then \(\tilde{x}{\cdot}\phi(u)=\phi(\tilde{x}u)=u{\cdot}\phi(\tilde{x})=0\). But \(\tilde{x}\) is a unit in \(K(R)\), hence \(\phi=0\).

(iv) Let \(\mu\co \fr{m}_{A}\ot_{A}\fr{m}_{A}^{*}\ra A\) denote the pairing. We have already shown that \(\im\mu=\fr{m}_{A}+\vare{\cdot}\fr{m}_{A}\sbeq A\). If \(A\) is regular then \(\fr{m}_{A}\) is a principal ideal and \(\im\mu=A\). For the converse suppose \(\im\mu=A\), i.e.\ there is an element \(u\in\fr{m}_{A}\) with \(\vare u=1\). In the proof of (iii) we only assumed that \(x\) in \(\vare=fx^{-1}\) was not a zero divisor. But we can also assume that \(x\in\fr{m}_{A}\setminus\fr{m}_{A}^{2}\). Since \(fu=x\) we have \(f\notin \fr{m}_{A}\) and \(\Soc A/(x)=(\bar{f})=A/(x)\), i.e.\ \(A/(x)\) is a field, \((x)\) is the maximal ideal of \(A\), and \(A\) is regular. Hence if \(A\) is not regular then \(\im\mu=\fr{m}_{A}\).  

For the last part note that the pairing \(\mu\) composed with the inclusion \(j:A\ra \fr{m}_{A}^{*}\) equals the multiplication map \(u\ot\rho\mapsto u{\cdot}\rho\). Hence \(j\mu\) is obtained by applying \(-\ot_{A}\fr{m}_{A}^{*}\) to the inclusion \(\fr{m}_{A}\sbeq A\). By the snake lemma the following commutative diagram with exact rows
\begin{equation}\label{eq.Fin}
\xymatrix@-0pt@C-0pt@R-8pt@H-0pt{
0\ar[r] & \fr{m}_{A}\ar[d]\ar[r] & \fr{m}_{A}\ot_{A}\fr{m}_{A}^{*}\ar[d]^(0,42){j\mu}\ar[r]^(0,43){{\id}\ot p} & \fr{m}_{A}\ot_{A}\fr{m}_{A}^{*}/A\ar[d]^(0.42){0}\ar[r] & 0 \\
0\ar[r] & A\ar[r]^(0,45){j} & \fr{m}_{A}^{*}\ar[r]^(0,43){p} & \fr{m}_{A}^{*}/A\ar[r] & 0 
}
\end{equation}
gives the 6 last terms
\begin{equation}\label{eq.6}
0\ra\tor{A}{1}{\fr{m}_{A}^{*}}{k}\xra{p_{*}}\tor{A}{1}{\fr{m}_{A}^{*}/A}{k}\xra{\delta}
k\ra \fr{m}_{A}^{*}/\fr{m}_{A}{\cdot}\fr{m}_{A}^{*}\ra \fr{m}_{A}^{*}/A\ra 0
\end{equation}
in the long-exact sequence derived from \(-\ot_{A}k\) applied to \(0\ra A\ra \fr{m}_{A}^{*}\ra \fr{m}_{A}^{*}/A\ra 0\). We get that \(\delta=0\) iff the image of \(\mu\) equals \(\fr{m}_{A}\) and \(\delta\) is surjective iff \(\im\mu=A\). By (ii), \(\fr{m}_{A}^{*}/A\cong k\) and the result follows.
\end{proof}
\begin{rem}\label{rem.Finn1}
The short exact sequence of \(R\)-modules \(0\ra R\ra I^{*}\ra S\ra 0\) in Theorem \ref{prop.Finn} is an example of the right \(\cat{X}_{h}\)-approximation given in Theorem \ref{thm.approx3}. The left \(\Pf\)-approximation is given as follows. Lift generators of \(\fr{m}_{A}\) to \(I\) to define a surjective map \(F\ra I\) from a finite \(R\)-free module \(F\). Dualising the short exact sequence \(0\ra \syz{R}{}I\ra F\ra I\ra 0\) gives a short exact sequence \(0\ra I^{*}\ra F^{*}\ra (\syz{R}{}I)^{*}\ra 0\) since \(\xt{1}{R}{I}{R}=0\) by Theorem \ref{prop.Finn} (i) and Proposition \ref{prop.main} (i). The cokernel of the composition \(R\ra I^{*}\ra F^{*}\) defines \(L'\) giving the left \(\Pf\)-approximation of \(S\). Both approximations are contained in the following commutative diagram with short exact rows and columns and the (co)cartesian boxed square:
\begin{equation}\label{eq.box}
\xymatrix@C-4pt@R-12pt@H-30pt{
&& 0\ar[d] & 0\ar[d] \\
0\ar[r] & R \ar[r]\ar@{=}[d] & I^{*} \ar[r]\ar[d]\ar@{}[dr]|{\Box} & S \ar[r]\ar[d] & 0  \\   
0\ar[r] & R \ar[r] & F^{*} \ar[r]\ar[d] & L' \ar[r]\ar[d] & 0  \\
&& (\syz{R}{}I)^{*} \ar@{=}[r]\ar[d] & (\syz{R}{}I)^{*}\ar[d] \\
&& 0 & 0
}
\end{equation}
\end{rem}
\begin{cor}\label{cor.Finn}
With notation and assumptions as in \textup{Theorem \ref{prop.Finn}} except for the last sentence\textup{.} Assume that \(A\) is essentially of finite type over \(k\) and that \(\Spec A\ot_{k}\bar{k}\) only has complete intersection singularities of dimension \(1\) where \(\bar{k}\) is an algebraic closure of \(k\)\textup{.} 
Then the conclusion in \textup{Theorem \ref{prop.Finn}} holds\textup{.}
\end{cor}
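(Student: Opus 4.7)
The plan is to reduce directly to Theorem \ref{prop.Finn} by verifying that under the weaker hypotheses of the corollary, $A$ is still a $1$-dimensional Gorenstein local ring.

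First I observe that the retraction $R\to S$ built into the setup makes $k$ the residue field of $A$. The composition $S\to R\to S$ is the identity; applying $-\ot_{S}k$ gives that $k\to A\to k$ is the identity, so $A\to k$ is a surjective $A$-algebra retraction of the subring $k\hra A$. Since $A$ is local, the kernel of this surjection onto a field must be $\fr{m}_{A}$, whence $A/\fr{m}_{A}\cong k$.

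Set $\bar A:=A\ot_{k}\bar k$. Since $A$ is essentially of finite type over $k$, $\bar A$ is essentially of finite type, and hence noetherian, over the field $\bar k$. The previous paragraph gives $\bar A/\fr{m}_{A}\bar A=k\ot_{k}\bar k=\bar k$, so $\bar{\fr{m}}:=\fr{m}_{A}\bar A$ is a maximal ideal of $\bar A$ lying above $\fr{m}_{A}$. By the hypothesis of the corollary, the localisation $\bar A_{\bar{\fr{m}}}$ is a $1$-dimensional complete intersection, in particular a $1$-dimensional Gorenstein local ring.

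Next, $A\to\bar A_{\bar{\fr{m}}}$ is a faithfully flat local homomorphism: it is flat as the composition of the flat base change $A\to\bar A$ with a localisation, it is local because $\bar{\fr{m}}$ contracts to $\fr{m}_{A}$, and any flat local map of local noetherian rings is automatically faithfully flat. The standard dimension formula yields $\dim\bar A_{\bar{\fr{m}}}=\dim A+\dim(\bar A_{\bar{\fr{m}}}/\fr{m}_{A}\bar A_{\bar{\fr{m}}})$, and the relative term vanishes since $\bar A_{\bar{\fr{m}}}/\fr{m}_{A}\bar A_{\bar{\fr{m}}}=\bar k$ is a field; thus $\dim A=1$. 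Finally, Gorensteinness descends along faithfully flat local homomorphisms, so $A$ itself is Gorenstein.

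With $A$ now known to be a $1$-dimensional Gorenstein local ring, the hypotheses of Theorem \ref{prop.Finn} are in force and its conclusions (i)--(iv) apply unchanged. The only real content of the argument is the transfer from the geometric complete-intersection assumption on $\bar A$ to the arithmetic Gorenstein property of $A$; this works cleanly precisely because the retraction built into the setting forces $\fr{m}_{A}$ to extend to a maximal ideal of $\bar A$ with residue field $\bar k$, so one need only verify flatness and the dimension equality at a single point of the geometric fibre.
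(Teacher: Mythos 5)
Your argument is correct and follows essentially the same route as the paper: reduce to Theorem \ref{prop.Finn} by showing that $A$ is a $1$-dimensional Gorenstein local ring. The only difference is that the paper disposes of the descent step in one line by citing \cite[19.3.4]{EGAIV4} (descent of the complete intersection property along the field extension $k\to\bar k$), whereas you carry out the descent by hand via the faithfully flat local homomorphism $A\to\bar A_{\bar{\fr{m}}}$, the fibre dimension formula, and descent of Gorensteinness --- all of which is valid and self-contained.
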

\begin{proof}
By \cite[19.3.4]{EGAIV4}, \(A\) is a complete intersection and in particular a \(1\)-dimensional Gorenstein ring. The result hence follows from Theorem \ref{prop.Finn}.
\end{proof}
\begin{rem}\label{rem.Finn2}
In Knudsen's lemma \cite[2.2]{knu:83a}, \(\Spec A\ot_{k}\bar{k}\) is assumed to have only ordinary double points. Knudsen has given a different proof of Corollary \ref{cor.Finn} (i)-(iii) in this case; see \cite{knu:12}. Stabilisation at a point inserts \(\Proj(\fr{m}_{A}^{*}\ot_{A}k)\) (disregarding the case where two points coincides) which by (iv) is \(\BB{P}^{1}_{k}\) if the point is singular and just equals the point if it is regular. See Proposition \ref{prop.mf2}. 

In \cite[3.7]{knu:83a} Knudsen shows that the clutching map is a closed immersion of stacks. In the proof he claims that the image \(I{\cdot}I^{*}\) of the pairing \(I\ot_{R}I^{*}\ra R\) equals \(I\) which implies that \(I^{*}\ot_{R}S\cong S^{\oplus2}\). Both claims are wrong if the family is smoothing or the point moves away from the singularity as the dichotomy in Theorem \ref{prop.Finn} (iv) shows. Knudsen refers to the (completion of) the example in his appendix which we now consider.
\end{rem}
\begin{ex}\label{ex.Finn2}
Let \(S\) be a noetherian ring and put \(R=S[x,y]/(xy-bc)\) for some \(b,c\in S\). The ring homomorphism \(R\ra S\) is given by \(x\mapsto b\) and \(y\mapsto c\) and \(I=(x-b,y-c)\). Put \(\tilde{\vare}=(x-c)\cdot(x+y-b-c)^{-1}\). Calculate \(\tilde{\vare}\cdot(x-b)=x\) and \(\tilde{\vare}\cdot(y-c)=-c\). Hence the multiplication map \(m_{\tilde{\vare}}\) is contained in \(I^{*}\), but not in the submodule \(\nd{}{R}{I}\) unless \(b=0=c\). In particular one obtains \(R/I{\cdot}I^{*}\cong S/(b,c)\) and \(I^{*}\ot_{R}S\cong S{\oplus}S/(b,c)\); (consider \eqref{eq.6} for \(S\ra R\) and \(I\)).

However, in the proof of \cite[3.7]{knu:83a} Knudsen only needs his claims after restricting to the locus of singular curves. For any stable curve \(C\ra T\) with a section \(\Delta:T\ra C\) he defines the singular locus \(C^{\text{sing}}\hra C\) by the ideal sheaf given as the image of a pairing \(\Omega_{C/T}\ot\,\omega_{C/T}^{*}\ra \Q_{C}\). He then only considers the induced stable curve over \(T'=T{\times}_{C}C^{\text{sing}}\). In our (affine) case with \(\omega_{R/S}\cong R\) the pairing becomes the map \(\Omega_{R/S}\ra R\) where \(G\mr{dx}+H\mr{dy}\mapsto Gx-Hy\). The image is the ideal \((x,y)\sbeq R\). The image of \((x,y)\) in \(S\) defines the quotient \(S'=S\ot_{R}R/(x,y)\cong S/(b,c)\). Put \(R'=R\ot_{S}S'\) and \(I'=I\ot_{S}S'\). 
Since the pairing \(I\ot_{R}I^{*}\ra R\) commutes with base change by Theorem \ref{prop.Finn} (i) and Proposition \ref{prop.main} (ii), the image of the pairing \(I'\ot_{R'}(I')^{*}\ra R'\) is \(I'\) and so \(R'/I'{\cdot}(I')^{*}\cong S'\) and \((I')^{*}\ot_{R'}S'\cong (S')^{\oplus 2}\). In the case \(S=k[b,c]\) the henselisation of the corresponding family is versal; see Corollary \ref{cor.main2}, and then this is essentially what is needed in the proof of \cite[3.7]{knu:83a}.
\end{ex}
Fix a noetherian local ring \(\vL\) with residue field \(k\). Let \({}_{\vL}\cat{H}\) be the category of noetherian, henselian, local \(\vL\)-algebras \(S\) with residue field \(k\) and let \({}_{\vL}\cat{H}/k\) be the corresponding comma category of algebras in \({}_{\vL}\cat{H}\) above (i.e.\ with an algebra homomorphism to) \(k\). Let \({}_{\vL}\cat{A}/k\) be the subcategory of artin rings in \({}_{\vL}\cat{H}/k\).
\begin{defn}\label{def.versal}
Let \(F\) and \(G\) be set-valued functors on \({}_{\vL}\cat{H}/k\) (or \({}_{\vL}\cat{A}/k\)) with \(\#F(k)=1=\#G(k)\). A map \(\phi:F\ra G\)
is \emph{smooth} (formally smooth) if the natural map of sets \(f_{\phi}:F(S)\ra F(S_{0})\times_{G(S_{0})}G(S)\) is surjective for all surjections \(\pi:S\ra S_{0}\) in \({}_{\vL}\cat{H}/k\) (respectively \({}_{\vL}\cat{A}/k\)). An element \(\nu\in F(R)\) is \emph{versal} if the induced map \(\hm{}{{}_{\vL}\cat{H}/k}{R}{-}\ra F\) is smooth and \(R\) is algebraic as \(\vL\)-algebra (i.e.\ \(R\) is the henselisation of a finite type \(\vL\)-algebra). An element \(\nu\in F(R)\) is \emph{formally versal} if the induced map \(\hm{}{{}_{\vL}\cat{H}/k}{R}{-}\ra F\) of functors restricted to \({}_{\vL}\cat{A}/k\) is formally smooth.
See \cite{art:74}. 
\end{defn}
Let \({}_{\vL}\cat{HFP}\) be the category with objects \emph{algebraic, flat and pointed algebras} \(\xi\co S\ra T\ra S\) in \({}_{\vL}\cat{H}\), in particular the composition is the identity. The arrows \(\xi_{1}\ra\xi_{0}\) are corresponding commutative diagrams of ring homomorphisms
\begin{equation}\label{eq.fp}
\xymatrix@-0pt@C-0pt@R-8pt@H-0pt{
S_{1} \ar[r]\ar[d]^(0.45){p} & T_{1} \ar[r]\ar[d]^(0.45){q} & S_{1} \ar[d]^(0.45){p} \\
S_{0} \ar[r] & T_{0} \ar[r] & S_{0}
}
\end{equation}
such that the left square is cocartesian (then the right square is cocartesian too). Note that base change exists for the forgetful functor \({}_{\vL}\cat{HFP}\ra {}_{\vL}\cat{H}\). It is given by the henselisation of the tensor product \(R=R_{1}\ot_{S_{1}}S_{2}\) in the maximal ideal \(\fr{m}_{R_{1}}R+\fr{m}_{S_{2}}R\), denoted \(R_{1}\hot_{S_{1}}S_{2}\). The map \(R_{1}\hot_{S_{1}}S_{2}\ra S_{2}\) is the natural one. Hence \({}_{\vL}\cat{HFP}\) is fibred in goupoids above \({}_{\vL}\cat{H}\).

Fix an object \(\xi\co k\ra A\ra k\) in \({}_{\vL}\cat{HFP}\). Let \(\cdf{}{A\ra k}\) denote the corresponding comma category \({}_{\vL}\cat{HFP}/\xi\) of flat and pointed algebras above \(\xi\), called \emph{pointed deformations of \(A\)}. Similarly there is a category \(\cdf{}{A}\) of (unpointed) deformations of \(A\). Both categories are fibred in groupoids above \({}_{\vL}\cat{H}/k\) and there is a map of fibred categories \(\cdf{}{A\ra k}\ra\cdf{}{A}\) by forgetting the pointing. There are corresponding functors \(\df{}{A\ra k}\) and \(\df{}{A}\) from \({}_{\vL}\cat{H}/k\) to \(\Sets\) obtained by identifying all isomorphic objects in the fibres and identifying arrows accordingly. There is an induced map \(\df{}{A\ra k}\ra\df{}{A}\) of functors. We often abuse the notation by hiding the maps to the base object.
\begin{thm}\label{thm.main}
Given an object \(k\ra A\ra k\) in \({}_{\vL}\cat{HFP}\) and assume \(\iota\co S\ra R\) is an unpointed deformation of \(A\)\textup{.} Let \(\id\hot 1\co R\ra R^{(2)}\) be the base change of \(\iota\) by \(\iota\) and \(R^{(2)}\ra R\) the multiplication map\textup{.} Then \(\xi_{v}\co R\ra R^{(2)}\ra R\) is a pointed deformation of \(A\)\textup{.}

If \(\iota\co S\ra R\) gives a formally versal \textup{(}respectively versal\textup{)} element in \(\df{}{A}\) then \(R\ra R^{(2)}\ra R\) gives a formally versal \textup{(}respectively versal\textup{)} element in \(\df{}{A\ra k}\)\,\textup{.}
\end{thm}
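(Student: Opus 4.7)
My approach is to first verify directly that \(\xi_{v}\) is a pointed deformation, and then to deduce (formal) versality from the corresponding property of \(\iota\) by exploiting the section of \(\xi_{v}\) to build the required lift. First I would check that \(\alpha=\id\hot 1\co R\ra R^{(2)}=R\hot_{S}R\) is flat, being the base change of the flat map \(\iota\) along itself, that \(\mu\alpha=\id_{R}\) holds tautologically, and that the closed fibre \(R^{(2)}\hot_{R}k\) is canonically isomorphic to \(R\hot_{S}k\cong A\) with section recovering \(A\ra k\). This places \(\xi_{v}\) in \(\cdf{}{A\ra k}(R)\).

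For the second assertion, I would fix a surjection \(\pi\co S_{1}\thr S_{0}\) in \({}_{\vL}\cat{H}/k\) (respectively in \({}_{\vL}\cat{A}/k\) for the formal version), a morphism \(\phi_{0}\co R\ra S_{0}\), and a pointed deformation \(\xi_{1}\co S_{1}\ra T_{1}\ra S_{1}\) whose base change along \(\pi\) is identified with \(\xi_{v}\hot_{R}S_{0}\) via \(\phi_{0}\). Unwinding, this datum provides an isomorphism \(\beta_{0}\co R\hot_{S}S_{0}\xra{\sim}T_{0}:=T_{1}\hot_{S_{1}}S_{0}\) of \(S_{0}\)-algebras intertwining the sections, where \(S\) acts on \(S_{0}\) through \(\psi_{0}:=\phi_{0}\iota\). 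Forgetting the sections, \(S_{1}\ra T_{1}\) is an unpointed deformation of \(A\) lifting \(S_{0}\ra R\hot_{S}S_{0}\cong T_{0}\). Applying the (formal) versality of \(\iota\) for \(\df{}{A}\) to the data \((\pi,\psi_{0},S_{1}\ra T_{1})\), I obtain a lift \(\psi_{1}\co S\ra S_{1}\) of \(\psi_{0}\) together with an isomorphism \(\beta_{1}\co R\hot_{S}S_{1}\xra{\sim}T_{1}\) of unpointed \(S_{1}\)-deformations extending \(\beta_{0}\).

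The candidate lift is then defined as the composition \(\phi_{1}\co R\xra{\id\hot 1}R\hot_{S}S_{1}\xra{\beta_{1}}T_{1}\ra S_{1}\), using the section of \(\xi_{1}\). Reducing modulo \(\pi\) this becomes \(R\ra R\hot_{S}S_{0}\xra{\beta_{0}}T_{0}\ra S_{0}\), which equals \(\phi_{0}\) because \(\beta_{0}\) respects the sections and the section of \(\xi_{v}\hot_{R}S_{0}\) is the \(S_{0}\)-linear extension \(x\hot t\mapsto \phi_{0}(x)t\) of \(\phi_{0}\). It remains to identify \(\xi_{v}\hot_{R}S_{1}\) with \(\xi_{1}\): the total ring \((R\hot_{S}R)\hot_{R,\alpha}S_{1}\) is canonically \(R\hot_{S}S_{1}\), matched with \(T_{1}\) by \(\beta_{1}\), and the section of \(\xi_{v}\hot_{R}S_{1}\) is \(x\hot t\mapsto\phi_{1}(x)t\). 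Under \(\beta_{1}\) this coincides with the section of \(\xi_{1}\), because both are \(S_{1}\)-algebra homomorphisms \(R\hot_{S}S_{1}\ra S_{1}\) whose restrictions to \(R\) equal \(\phi_{1}\) by construction, and \(R\) generates \(R\hot_{S}S_{1}\) as an \(S_{1}\)-algebra. Hence \(\xi_{v}\hot_{R}S_{1}\cong\xi_{1}\), which is exactly the smoothness condition required.

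The main technical obstacle I expect is keeping track of the two distinct \(R\)-algebra structures on \(R^{(2)}\) (via \(\alpha\) versus the second coordinate) through each base change, and ensuring that henselisation commutes suitably with the tensor products so that the identifications \((R\hot_{S}R)\hot_{R,\alpha}S_{1}\cong R\hot_{S}S_{1}\) are genuinely canonical; the section of \(\xi_{v}\) transports correctly precisely because \(\phi_{1}\) was defined through \(\beta_{1}\) and the section of \(\xi_{1}\), which is the elegant point of the construction.
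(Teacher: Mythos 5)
Your proposal is correct and follows essentially the same route as the paper: verify $\xi_{v}$ is a pointed deformation via its closed fibre, forget the pointing to reduce the lifting problem to the unpointed versality of $\iota$ (your $\psi_{1},\beta_{1}$ are the paper's $\theta_{1},\beta_{1}$), define the lift of the classifying map as the section of $\xi_{1}$ precomposed with $\beta_{1}\circ(\id\hot 1)$ (the paper's $\alpha_{1}=\pi_{1}\tau_{1}$), and check the two compatibilities by evaluating on the two tensor factors of $R^{(2)}$. No substantive differences.
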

\begin{proof}
The residue field of \(R\) is \(k\) and \((R^{(2)}\ra R)\ot_{R}k\cong (A\ra k)\). Hence \(\xi_{v}\) induces an object in \(\cdf{}{A\ra k}\)\,. To prove (formal) versality we consider a map \(\xi_{1}\ra \xi_{0}\) in \(\cdf{}{R\ra k}\) as in \eqref{eq.fp} with surjective vertical maps. Given a map \(\alpha_{0}\co R\ra S_{0}\) such that \(\xi_{v}\hot_{R} S_{0}\cong \xi_{0}\) we show that there is a lifting \(\alpha_{1}\co R\ra S_{1}\) of \(\alpha_{0}\) inducing \(\xi_{1}\).
I.e.\ we consider the following lifting diagram:
\begin{equation}\label{eq.lift}
\xymatrix@-0pt@C+6pt@R+2pt@H-0pt{
& S_{1}\ar[rr]^{\iota_{1}}\ar[d]^{p} && T_{1}\ar[rr]^{\pi_{1}}\ar[d]^{q} && S_{1}\ar[d]_{p}  & \\
& S_{0}\ar[rr]^{\iota_{0}}|!{[ddr];[urr]}\hole && T_{0}\ar[rr]^{\pi_{0}} 
&& S_{0}  & \\
&& R\ar[ul]^{\alpha_{0}}\ar@{-->}[uul]_(0.3){\alpha_{1}}|!{[ul];[ur]}\hole
\ar[rr]^{\id\hot 1}|!{[d];[uur]}\hole|!{[d];[ur]}\hole && R^{(2)}\ar[ul]^(0.55){\beta_{0}}\ar@{-->}[uul]_(0.3){\beta_{1}}|!{[ul];[ur]}\hole
\ar[rr]^(0.45){\mu} && R\ar[ul]^(0.55){\alpha_{0}}\ar@{-->}[uul]_{\alpha_{1}} \\
S\ar@{.>}[uuur]^{\theta_{1}}\ar[uur]_{\theta_{0}}\ar[urr]^{\iota}\ar[rr]^{\iota} && R\ar@{.>}[uuur]^{\tau_{1}}\ar[uur]_(0.3){\tau_{0}}\ar[urr]_{1\hot\id} 
}
\end{equation}
In particular the diagram of solid arrows is commutative with cocartesian squares. We have that \(\iota\in\df{}{A}(S)\) maps to \(\id\hot 1\in\df{}{A}(R)\). Since \(\xi_{v}\) and \(\xi_{0}\) by \(\df{}{A\ra k}\ra\df{}{A}\) maps to \(\id\hot 1\) and \(\iota_{0}\) respectively, it follows that \(\iota\) maps to \(\iota_{0}\).
By versality of \(\iota\) there exists a lifting \(\theta_{1}\) of \(\theta_{0}\) such that \(\iota\) maps to \(\iota_{1}\) in \(\df{}{A}(S_{1})\). The obtained map \(\tau_{1}\) lifts \(\tau_{0}\). Define \(\alpha_{1}\) as \(\pi_{1}\tau_{1}\). Then \(\alpha_{1}\) lifts \(\alpha_{0}\) since \(p\alpha_{1}=p\pi_{1}\tau_{1}=\pi_{0}q\tau_{1}=\pi_{0}\tau_{0}=\alpha_{0}\mu(1\hot\id)=\alpha_{0}\). Also \(S_{1}\hot_{R}R^{(2)}\cong S_{1}\hot_{S}R\cong T_{1}\). Let \(\beta_{1}\) be the induced map \(R^{(2)}\ra T_{1}\). It lifts \(\beta_{0}\) and we get that \(\id\hot 1\) maps to \(\iota_{1}\). But also the right square commutes since \((\pi_{1}\beta_{1}-\alpha_{1}\mu)(1\hot\id)=\pi_{1}\tau_{1}-\alpha_{1}=0\) and \((\pi_{1}\beta_{1}-\alpha_{1}\mu)(\id\hot 1)=\pi_{1}\iota_{1}\alpha_{1}-\alpha_{1}=0\).
\end{proof}
\begin{lem}\label{lem.AQhens}
Let \(h^{\textnormal{ft}}\co S\ra R^{\textnormal{ft}}\) be a finite type homomorphism of noetherian rings\textup{.} Let \(M\) be an \(R^{\textnormal{ft}}\)-module\textup{.} Let \(R\) denote the henselisation of \(R^{\textnormal{ft}}\) in a maximal ideal \(\fr{m}\)\textup{.} 
\begin{enumerate}
\item[(a)] There are natural isomorphisms of Andr{\'e}-Quillen cohomology
\begin{equation*}
\cH^{i}(S,R,R\ot M)\cong\cH^{i}(S, R^{\textnormal{ft}},M)\ot_{R^{\textnormal{ft}}}R\,\,\text{ for all } i\,.
\end{equation*}
\end{enumerate}
Suppose in addition that \(M\) is finite\textup{,} \(h^{\textnormal{ft}}\) is flat\textup{,} \(S\) local henselian and \(S/\fr{m}_{S}\cong R^{\textnormal{ft}}/\fr{m}\cong k\)\textup{.} 
Let \(k\ra A^{\textnormal{ft}}\) denote the central fibre of \(h^{\textnormal{ft}}\) and put \(\fr{m}_{0}=\fr{m}A^{\textnormal{ft}}\)\textup{.} Assume \(\Spec A^{\textnormal{ft}} \setminus\{\fr{m}_{0}\}\) is smooth over \(k\)\textup{.}
\begin{enumerate}
\item[(b)] For all \(i>0\) the Andr{\'e}-Quillen cohomology \(\cH^{i}(S,R,R\ot M)\) is finite as \(S\)-module and there is a natural \(R^{\textnormal{ft}}_{\fr{m}}\)-isomorphism
\begin{equation*}
\cH^{i}(S,R,R\ot M)\cong\cH^{i}(S, R^{\textnormal{ft}},M)_{\fr{m}}\,.
\end{equation*}
\end{enumerate}
\end{lem}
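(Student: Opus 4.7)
The plan is to deduce (a) from the Jacobi-Zariski sequence for Andr{\'e}-Quillen cohomology together with vanishing for {\'e}tale maps, and to obtain (b) by showing the relevant cohomology groups are of finite length as $R^{\textnormal{ft}}_{\fr{m}}$-modules after localisation. For (a), since the henselisation $R^{\textnormal{ft}}\to R$ is a filtered colimit of {\'e}tale $R^{\textnormal{ft}}$-algebras, its cotangent complex vanishes, so $\cH^{i}(R^{\textnormal{ft}},R,N)=0$ for every $R$-module $N$ and every $i$. The Jacobi-Zariski long exact sequence for $S\to R^{\textnormal{ft}}\to R$ with coefficients $N=R\ot M$ then gives $\cH^{i}(S,R,R\ot M)\cong\cH^{i}(S,R^{\textnormal{ft}},R\ot M)$, and since $R^{\textnormal{ft}}\to R$ is flat, flat base change for Andr{\'e}-Quillen cohomology identifies the right hand side with $\cH^{i}(S,R^{\textnormal{ft}},M)\ot_{R^{\textnormal{ft}}}R$.

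For (b), factor $R^{\textnormal{ft}}\to R$ through $R^{\textnormal{ft}}_{\fr{m}}$ so that (a) reads
\begin{equation*}
\cH^{i}(S,R,R\ot M)\;\cong\;\cH^{i}(S,R^{\textnormal{ft}},M)_{\fr{m}}\ot_{R^{\textnormal{ft}}_{\fr{m}}}R.
\end{equation*}
Because $R^{\textnormal{ft}}_{\fr{m}}\to R$ is an isomorphism on every artinian quotient $(-)/\fr{m}^{n}$, the claimed isomorphism with $\cH^{i}(S,R^{\textnormal{ft}},M)_{\fr{m}}$ reduces to showing that $N:=\cH^{i}(S,R^{\textnormal{ft}},M)_{\fr{m}}$ is finitely generated over $R^{\textnormal{ft}}_{\fr{m}}$ and annihilated by some $\fr{m}^{n}$, i.e.\ of finite length. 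Finite generation over $R^{\textnormal{ft}}$ is automatic from pseudo-coherence of the cotangent complex $\mathbb{L}_{R^{\textnormal{ft}}/S}$ ($h^{\textnormal{ft}}$ being of finite type with $S$ noetherian) and finiteness of $M$. Once $N$ is known to be $\fr{m}$-torsion, finiteness over $S$ follows because $R^{\textnormal{ft}}_{\fr{m}}/\fr{m}^{n}$ is artinian local with residue field $k=S/\fr{m}_{S}$, and is therefore already a finite $S$-module.

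The central step is then to show $N_{\fr{p}}=0$ for every $\fr{p}\subsetneq\fr{m}$ and every $i>0$, equivalently that $S\to R^{\textnormal{ft}}$ is smooth at each $\fr{p}\in\Spec R^{\textnormal{ft}}_{\fr{m}}\setminus\{\fr{m}\}$. For primes $\fr{p}\supset\fr{m}_{S}R^{\textnormal{ft}}$, the image of $\fr{p}$ is a non-maximal prime of $A^{\textnormal{ft}}$ at which $A^{\textnormal{ft}}/k$ is smooth by hypothesis; the derived base-change identity $\mathbb{L}_{R^{\textnormal{ft}}/S}\otimes^{L}_{R^{\textnormal{ft}}}A^{\textnormal{ft}}\simeq\mathbb{L}_{A^{\textnormal{ft}}/k}$ together with flatness of $h^{\textnormal{ft}}$ and the fibrewise smoothness criterion then transports smoothness back to $\fr{p}$.

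The main obstacle is the horizontal case $\fr{p}\cap S\subsetneq\fr{m}_{S}$, where the smoothness hypothesis gives no direct input. Here I would invoke openness of the smooth locus for a flat finite-type morphism: the non-smooth locus $Z\subset\Spec R^{\textnormal{ft}}$ is closed, and the previous step shows $Z\cap\Spec R^{\textnormal{ft}}_{\fr{m}}$ meets the central fibre only in $\{\fr{m}\}$. For any putative $\fr{p}\in Z$ with $\fr{p}\neq\fr{m}$, the closure $V(\fr{p})\subset Z$ is positive-dimensional and its fibre over $\fr{m}_{S}$ is just $\{\fr{m}\}$, so upper semicontinuity of fibre dimension for $V(\fr{p})\to V(\fr{p}\cap S)\subset\Spec S$ forces the generic fibre to be zero-dimensional. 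The flat dimension formula $\dim R^{\textnormal{ft}}_{\fr{m}}=\dim S+\dim A^{\textnormal{ft}}_{\fr{m}_{0}}$ together with $R^{\textnormal{ft}}/\fr{m}=k$ then yields a contradiction ruling out such $\fr{p}$, and it is this dimensional bookkeeping that I expect to be the trickiest piece of the argument.
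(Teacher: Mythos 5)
Your argument for part (a) is sound: the cotangent complex of the ind-{\'e}tale map \(R^{\textnormal{ft}}\ra R\) vanishes, the Jacobi--Zariski sequence for \(S\ra R^{\textnormal{ft}}\ra R\) identifies \(\cH^{i}(S,R,-)\) with \(\cH^{i}(S,R^{\textnormal{ft}},-)\), and flat base change applies because \(S\) is noetherian and \(h^{\textnormal{ft}}\) of finite type, so the cotangent complex of \(R^{\textnormal{ft}}/S\) is represented degreewise by finite free modules. Note that the paper gives no proof of this lemma but refers to Lemma 10.1 of \cite{ile:11x}, so there is no internal argument to compare with.

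Part (b), however, has a genuine gap, exactly at the place you flag as the trickiest: the horizontal case. You reduce the claim to showing that \(N=\cH^{i}(S,R^{\textnormal{ft}},M)_{\fr{m}}\) is annihilated by a power of \(\fr{m}\), i.e.\ has finite length, and you propose to rule out primes \(\fr{p}\subsetneq\fr{m}\) of the non-smooth locus \(Z\) with \(\fr{p}\cap S\subsetneq\fr{m}_{S}\) by a dimension count. No such contradiction exists, because such primes genuinely occur and the intermediate statement is false. Take the constant deformation \(R^{\textnormal{ft}}=S[x,y]/(y^{2}-x^{3})\) of the cusp \(A^{\textnormal{ft}}=k[x,y]/(y^{2}-x^{3})\) (characteristic \(\neq 2,3\)): all hypotheses of (b) hold, yet \(Z=V(x,y)\cong\Spec S\) is a horizontal component through \(\fr{m}\), and \(\cH^{1}(S,R^{\textnormal{ft}},R^{\textnormal{ft}})_{\fr{m}}\cong S[x]/(x^{2})\) is a free \(S\)-module of rank \(2\), not \(\fr{m}\)-power torsion (though the conclusion of the lemma does hold here). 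Your upper-semicontinuity step only shows that \(V(\fr{p})\ra\Spec S\) is quasi-finite, which is entirely consistent with the dimension formula; there is nothing to contradict.

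What the hypotheses actually give is weaker but sufficient: for \(i>0\) the support of \(\cH^{i}(S,R^{\textnormal{ft}},M)\) lies in \(Z\), and \(Z\) meets the central fibre only in \(\fr{m}\), so \(Z\) is quasi-finite over \(S\) at \(\fr{m}\). This is where the hypothesis that \(S\) is henselian --- which your argument never uses --- must enter. By the local structure of quasi-finite schemes over a henselian local base, the finite-type \(S\)-algebra carrying \(N\) splits off a direct factor \(C\) through \(\fr{m}\) which is finite and local over \(S\); then \(C\) is itself henselian, so \(C\ot_{R^{\textnormal{ft}}_{\fr{m}}}R\cong C\), whence \(N\ot_{R^{\textnormal{ft}}_{\fr{m}}}R\cong N\) and \(N\) is finite over \(S\). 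The finite-length reduction as you state it cannot be repaired; the argument must go through quasi-finiteness plus henselianness instead.
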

\begin{proof}
See the proof of Lemma 10.1 in \cite{ile:11x}.
\end{proof}
We will use the following notation and assumptions. Let \(x\) denote a sequence of variables \(x_{1},\dots,x_{m}\) and \(f\) a regular sequence of elements \(f_{1},\dots,f_{c}\) contained in \((x)^{2}\subset k[x]\) for a field \(k\). Put \(A^{\textnormal{ft}}=k[x]/(f)\) and assume the Andr{\'e}-Quillen cohomology \(\cH^{1}(k,A^{\textnormal{ft}},A^{\textnormal{ft}}) \cong (A^{\textnormal{ft}})^{\oplus c}/(\im\nabla(f))\) has support in \((x)\) where \(\nabla(f)\) equals the matrix \((\partial f_{i}/\partial x_{j})_{i,j}\). Let \(e_{1},\dots,e_{c}\) be the standard generators in \(k[x]^{\oplus c}\) and pick elements \(g_{1},\dots,g_{N}\) in \(k[x]^{\oplus c}\) such that \(\{e_{1},\dots,e_{c},g_{1},\dots,g_{N}\}\) induce a \(k\)-basis for the finite dimensional \((A^{\textnormal{ft}})^{\oplus c}/(\im\nabla(f))\). Let \(g_{j}^{(1)},\dots,g_{j}^{(c)}\) denote the \(c\) projections of \(g_{j}\) in \(k[x]\). 
Let \(z=z_{1},\dots,z_{c}\) and \(t=t_{1},\dots,t_{N}\) be new sets of variables and define \(F_{i}(x,t)=f_{i}(x)+\sum_{j} t_{j}g_{j}^{(i)}\) in \(k[x,t]\). 
Pick liftings in \(\vL\) of the coeffiecients of \(F_{i}\) to obtain a lifting \(\tilde{F}_{i}\) in \(\vL[x,t]\) of \(F_{i}\). Put \({}^{v\!}\tilde{F}_{i}=\tilde{F}_{i}+z_{i}\in\vL[x,t,z]\). Finally, let \(s=s_{1},\dots,s_{m}\) be another set of variables and let \(\tilde{F}(x,t)-\tilde{F}(s,t)\) denote the sequence \(\tilde{F}_{1}(x,t)-\tilde{F}_{1}(s,t),\dots,\tilde{F}_{c}(x,t)-\tilde{F}_{c}(s,t)\) in \(\vL[x,s,t]\).
\begin{cor}\label{cor.main2}
Let \(A\) be the henselisation of \(A^{\textnormal{ft}}=k[x]/(f)\) in the maximal ideal \((x)A^{\textnormal{ft}}\)\textup{.} Let \(R\ra T\ra R\) be the henselisation of 
\begin{equation*}
\vL[s,t]\ra\vL[x,s,t]/(\tilde{F}(x,t)-\tilde{F}(s,t))\xra{x\mapsto s} \vL[s,t]\,.
\end{equation*}
Then \(R\ra T\ra R\) gives a formally versal element for the pointed deformation functor \(\df{}{A\ra k}\co {}_{\vL}\cat{H}/k\ra \Sets\)\textup{.} If  \(\vL\) is an excellent ring this element is versal\textup{.}
\end{cor}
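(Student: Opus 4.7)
The plan is to derive this from Theorem~\ref{thm.main} after identifying the family in the statement with $W\hot_{V}W$ for a suitable formally versal unpointed deformation $\iota\co V\ra W$ of $A$. First I would exhibit such an $\iota$: let $V$ be the henselisation of $\vL[t,z]$ at the origin and $W$ the henselisation of $\vL[x,t,z]/({}^{v\!}\tilde F)$ at the origin. Since ${}^{v\!}\tilde F_{i}=\tilde F_{i}(x,t)+z_{i}$ is linear in $z_{i}$, the $z_{i}$ can be eliminated and $W$ is $\vL$-isomorphic to the henselisation of $\vL[x,t]$ at the origin, with $\iota$ becoming the map $t_{j}\mapsto t_{j}$, $z_{i}\mapsto -\tilde F_{i}(x,t)$. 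Hence $\iota$ is flat ($z$ is a regular parameter system in $W$) with central fibre $A$, and defines an object of $\df{}{A}(V)$.

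Next I would verify that $\iota$ is formally versal for $\df{}{A}$. By Lemma~\ref{lem.AQhens}, $\cH^{i}(k,A,A)\cong\cH^{i}(k,A^{\textnormal{ft}},A^{\textnormal{ft}})_{(x)}$ for $i>0$. In degree one this equals $((A^{\textnormal{ft}})^{\oplus c}/\im\nabla(f))_{(x)}$, which by hypothesis is the finite-dimensional $k$-vector space with basis represented by $e_{1},\dots,e_{c},g_{1},\dots,g_{N}$. The Kodaira--Spencer map of $\iota$ sends $\partial/\partial t_{j}\mapsto[g_{j}]$ and $\partial/\partial z_{i}\mapsto[e_{i}]$, hence is an isomorphism onto $\cH^{1}(k,A,A)$. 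Since $A^{\textnormal{ft}}$ is a complete intersection, Andr\'e's vanishing theorem yields $\cH^{i}(k,A^{\textnormal{ft}},A^{\textnormal{ft}})=0$ for $i\geq 2$, so the same holds for $\cH^{i}(k,A,A)$ by Lemma~\ref{lem.AQhens}; consequently the obstructions to infinitesimal lifting vanish and $\df{}{A}$ is formally smooth. Combined with bijectivity of Kodaira--Spencer, Schlessinger's criterion gives formal versality of $\iota$.

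Applying Theorem~\ref{thm.main} to $\iota$ produces a formally versal pointed deformation $W\ra W\hot_{V}W\ra W$ of $A$. To match this with the statement, write $(y_{1},t)$ and $(y_{2},t)$ for the variables of the two copies of $W$. The cocartesian square over $V$ identifies the two $t$'s and forces $\tilde F(y_{1},t)-\tilde F(y_{2},t)=0$, so $W\hot_{V}W$ is the henselisation of $\vL[y_{1},y_{2},t]/(\tilde F(y_{1},t)-\tilde F(y_{2},t))$, while the multiplication map is $y_{1}\mapsto y$, $y_{2}\mapsto y$, $t\mapsto t$. Under the relabelling $(y_{2},y_{1},t)\leftrightarrow(x,s,t)$ in the middle and $(y,t)\leftrightarrow(s,t)$ in the base, this is precisely $R\ra T\ra R$.

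Finally, when $\vL$ is excellent, both $R$ and $T$ are algebraic (being henselisations of finite type $\vL$-algebras), so the formal versality already established upgrades to versality by Artin approximation~\cite{art:74}. The main obstacle is the formal smoothness step: one must invoke the vanishing $\cH^{i}(k,A^{\textnormal{ft}},A^{\textnormal{ft}})=0$ for $i\geq 2$ for complete intersections, or equivalently the Kas--Schlessinger semiuniversality theorem for isolated complete intersection singularities.
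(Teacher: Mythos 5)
Your overall route coincides with the paper's: exhibit the unpointed family \(\iota\co V\ra W\) with \(V=\vL[t,z]^{\mr{h}}\), \(W=(\vL[x,t,z]/({}^{v\!}\tilde F))^{\mr{h}}\cong\vL[x,t]^{\mr{h}}\), prove it (formally) versal for \(\df{}{A}\), apply Theorem \ref{thm.main}, and identify \(W\hot_{V}W\) with the henselisation of \(\vL[x,s,t]/(\tilde F(x,t)-\tilde F(s,t))\); that last identification is exactly the one in the paper. Where you genuinely diverge is in verifying formal versality of \(\iota\). The paper never uses unobstructedness of \(\df{}{A}\): given a surjection \(S^{1}\thr S^{0}\) with \(\fr{m}_{S^{1}}\ker p=0\), it lifts the classifying map \(V\ra S^{0}\) to \(S^{1}\) by hand (possible since \(V\) is a henselised polynomial ring), base-changes \(\iota\) along the lift, and corrects the discrepancy with the given deformation using the transitive action of \(\cH^{1}(S^{0},R^{0},R^{0}\ot\ker p)\cong\cH^{1}(k,A,A)\ot_{k}\ker p\) on liftings (\cite[2.1.3.3]{ill:71} and \cite[IV 54]{and:74}) together with surjectivity of the Kodaira--Spencer map. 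Your alternative --- \(\cH^{i}(k,A,A)=0\) for \(i\geq2\) because \(A\) is a complete intersection, hence the functor is formally smooth, plus bijectivity of Kodaira--Spencer --- is also workable, but as written ``Schlessinger's criterion gives formal versality of \(\iota\)'' elides the actual step: Schlessinger produces a hull, and you still have to compare \(\iota\) with it (the classifying map from the hull is surjective on relative cotangent spaces and both rings are formally smooth over \(\vL\) of the same relative dimension, hence it is an isomorphism), or else run the paper's direct torsor argument; in either case the transitivity of the \(\cH^{1}\)-action is the ingredient you are implicitly relying on.

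Two further points need tightening. First, flatness of \(\iota\) does not follow because ``\(z\) is a regular parameter system in \(W\)'': the images of the \(z_{i}\) are the \(-\tilde F_{i}(x,t)\), which lie in \(\fr{m}_{W}^{2}+\fr{m}_{\vL}W\). The correct argument is Corollary \ref{cor.nak} applied to \(({}^{v\!}\tilde F)\) in \(\vL[x,t,z]^{\mr{h}}\), whose image in the central fibre is the regular sequence \((f)\). Second, the passage from formal versality to versality is not just ``Artin approximation'': the paper invokes Elkik's Th\'eor\`eme 8 \cite{elk:73} --- this is where the isolated-singularity hypothesis on \(A^{\textnormal{ft}}\) is used again --- together with Popescu's general N\'eron desingularisation to allow an arbitrary excellent \(\vL\). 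Neither issue breaks your argument, but both require the sharper references rather than the ones you give.
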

\begin{proof}
Let \(\iota\co S\ra R\) be the henselisation of \(\vL[t,z]\ra \vL[x,t,z]/({}^{v\!}\tilde{F})\) which we claim gives a (formally) versal element for \(\df{}{A}\). By Theorem \ref{thm.main} \(R\ra R\hot_{S}R\ra R\) gives a (formally) versal element for \(\df{}{A\ra k}\). Note that \(\vL[x,t,z]/({}^{v\!}\tilde{F})\cong\vL[x,t]\) where \(z_{i}\mapsto -\tilde{F}_{i}\). Consider the copy \(\vL[t,z]\ra\vL[s,t]\) where \(z_{i}\mapsto -\tilde{F}_{i}(s,t)\). There is a ring isomorphism \(\vL[s,t]\ot_{\vL[t,z]}\vL[x,t]\cong\vL[x,s,t]/(\tilde{F}(x,t)-\tilde{F}(s,t))\) and the corollary follows. 

The claim is basically well known. We only sketch the argument. By Corollary \ref{cor.nak}, \(({}^{v}\tilde{F})\) is a regular sequence and \(R\) is \(S\)-flat. So \(\iota\) is a deformation of \(A\). To show formal versality put \(\cH^{1}=\cH^{1}(k,A,A)\) which is isomorphic to \(\cH^{1}(k,A^{\text{ft}},A^{\text{ft}})\) by Lemma \ref{lem.AQhens}. Note that \(S_{(1)}:=S/(\fr{m}_{S}^{2}+\fr{m}_{\vL}S)\cong k{\oplus}(\cH^{1})^{*}\). Now \(\iota\) induces the universal deformation in \(\df{}{A}(S_{(1)})\cong \nd{}{k}{\cH^{1}}\) corresponding to the identity. 
Given a lifting situation of elements in \(\cdf{}{A}\) 
\begin{equation}\label{eq.lift2}
\xymatrix@-0pt@C+6pt@R-3pt@H-0pt{
& S^{1}\ar[rr]^{\iota^{1}}\ar[d]^(0.45){p} && R^{1}\ar[d]^(0.45){q} \\
& S^{0}\ar[rr]^{\iota^{0}}|!{[dr];[urr]}\hole && R^{0}  \\
S\ar@{.>}[uur]^{\theta^{1}}\ar[ur]_(0.55){\theta^{0}}\ar[rr]^(0.55){\iota} && R\ar@{.>}[uur]^(0.7){\tau^{1}}\ar[ur]_{\tau^{0}} 
}
\end{equation}
where the vertical maps are surjections, the solid squares are cocartesian and the \(S^{i}\) have finite length.  We need to prove that a lifting \(\theta^{1}\) exists such that the third square is cocartesian and lifts the bottom square. By induction we assume \(\fr{m}_{S^{1}}{\cdot}\ker p=0\). By picking elements \(\theta(z_{i})\) and \(\theta(t_{j})\) in \(S^{1}\) lifting \(\theta^{0}(z_{i})\) and \(\theta^{0}(t_{j})\) we obtain a map \(\theta\co S\ra S^{1}\). Let \(R^{2}=R\hot_{S}S^{1}\). Then \(\iota^{2}\co S^{1}\ra R^{2}\) is in \(\cdf{}{A}\) and lifts \(\iota^{0}\). By obstruction theory there is a transitive action of \(\cH^{1}(S^{0},R^{0},R^{0}\ot\ker p)\) on the set of equivalence classes of liftings of \(\iota^{0}\) to \(S^{1}\) \cite[2.1.3.3]{ill:71}. We have \(R^{0}\ot\ker p\cong A\ot_{k}\ker p\). By \cite[IV 54]{and:74} we get
\begin{equation}
 \cH^{1}(S^{0},R^{0},R^{0}\ot\ker p)\cong \cH^{1}(k,A,A)\ot_{k}\ker p\cong \hm{}{}{(\cH^{1})^{*}}{\ker p}\,.
\end{equation}
Elements here can be `added' to the ring homomorphism \(\theta\) and adjusting by the difference of \(\iota^{1}\) and \(\iota^{2}\) gives a new ring homomorphism \(\theta^{1}\) inducing \(\iota^{1}\) from \(\iota\).

Since \(A^{\text{ft}}\) has an isolated singularity, \cite[Th{\'e}or{\`e}me 8]{elk:73} gives that \(\iota\) is versal. Here we use Artin's Approximation Theorem for an arbitrary excellent coefficient ring \cite{art:69, pop:86,pop:90}; cf.\  R.\ Elkik's remark at the beginning of section 4 in \cite{elk:73}.
\end{proof}
For plane curve singularities the following explicit description of a stably reflexive complex \(E\) which is a hull for the ideal \(I\) in Theorem \ref{prop.Finn} is used to prove that the obvious generalisation of Knudsen's stabilisation \cite{knu:83a} has the relevant features.
\begin{lem}\label{lem.mf2}
Let \(S\ra P\) be a flat homomorphism of local noetherian rings and let \(k=S/\fr{m}_{S}\)\textup{.} Let \(P_{0}\) denote \(P\ot_{S}k\) which we assume is a regular ring of dimension \(2\)\textup{.}  Given an element \(F\in \fr{m}_{P}^{2}\) with image \(f\neq 0\) in \(P_{0}\)\textup{.} Put \(R=P/(F)\) and suppose there is an \(S\)-algebra map \(R\ra S\)\textup{.} Let \(I_{R}\) denote the kernel\textup{.}
\begin{enumerate}
\item[(i)] There are elements \(X_{1},X_{2},G_{1},G_{2}\) in \(\fr{m}_{P}\) with \(X_{1}\) and \(X_{2}\) inducing generators for \(I_{R}\) and satisfying \(X_{1}G_{1}+X_{2}G_{2}=F\)\textup{.}

\item[(ii)] Given such elements\textup{,} 
the \textup{(2 x 2)}-matrices 
\begin{equation*}
\Phi=
\begin{bmatrix}
X_{2} & G_{1} \\
-X_{1} & G_{2}
\end{bmatrix}
\quad\text{and}\quad
\Psi=
\begin{bmatrix}
G_{2} & -G_{1} \\
X_{1} & X_{2}
\end{bmatrix}
\end{equation*}
\end{enumerate}
induce a 2-periodic \(R\)-complex 
\begin{equation*}
\quad\dots\xra{\bar{\Psi}} R^{\oplus 2}\xra{\bar{\Phi}} R^{\oplus 2}\xra{\bar{\Psi}}R^{\oplus 2}\xra{\bar{\Phi}} R^{\oplus 2}\xra{\bar{\Psi}}\dots
\end{equation*}
which is stably reflexive with respect to \(h\) and a hull for \(I_{R}\)\textup{.}
\end{lem}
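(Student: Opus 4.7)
I would prove the lemma in two parts, first producing generators $X_1, X_2, G_1, G_2$ of the prescribed form, then applying Corollary \ref{cor.mf} to the matrix factorisation they determine. The key observation is that a direct calculation using only $X_1 G_1 + X_2 G_2 = F$ gives $\Phi\Psi = F \cdot I_2 = \Psi\Phi$, so once (i) is established, $(\Phi,\Psi)$ is a matrix factorisation of $F$ in the sense of Definition \ref{def.mf}, and the two-periodic $R$-complex in the statement is the associated complex $C(\Phi,\Psi)$.

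For (i), pick a regular system of parameters $x_1, x_2$ of the two-dimensional regular local ring $P_0$, so $\fr{m}_{P_0} = (x_1, x_2)$. Applying Corollary \ref{cor.nak} to $(F)$, the element $F$ is $P$-regular and $R$ is $S$-flat; the short exact sequence $0 \to I_R \to R \to S \to 0$ then shows $I_R$ is $S$-flat, and $I_R \ot_S k \hra A$ has image $\fr{m}_A = (x_1, x_2)$. Lifting $x_i$ to $\tilde X_i \in I_R$ and further to $X_i \in \fr{m}_P$, Nakayama applied to the finite $R$-module $I_R$ gives $I_R = (\bar X_1, \bar X_2)$. Let $J = (X_1, X_2) P \sbeq P$ and $I = \ker(P \to S)$, so $I = J + (F)$. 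Corollary \ref{cor.nak} applied to $(X_1, X_2)$ shows that this is a $P$-regular sequence and $P/J$ is $S$-flat, hence $J$ is $S$-flat; $I$ is $S$-flat from $0 \to I \to P \to S \to 0$. Tensoring $0 \to J \to I \to I/J \to 0$ with $k$, the flatness of $J$ and $I$ together with the fact that both inject into $P_0$ with image $\fr{m}_{P_0}$ gives $J \ot_S k \xra{\sim} I \ot_S k$, hence $(I/J) \ot_S k = 0$; Nakayama applied to the finite $P$-module $I/J$ then gives $I = J$, so $F = X_1 G_1 + X_2 G_2$ in $P$ for some $G_1, G_2 \in P$. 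If some $G_i$ were a unit, the relation would force $x_1 \in (x_2) + \fr{m}_{P_0}^2$ in $P_0$ (using $f \in \fr{m}_{P_0}^2$), contradicting linear independence of $x_1, x_2$ in $\fr{m}_{P_0}/\fr{m}_{P_0}^2$; hence $G_i \in \fr{m}_P$.

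For (ii), since $(\Phi, \Psi)$ is a matrix factorisation of the $P$-regular element $F$, Corollary \ref{cor.mf} applies (the unique closed point of $\Spec P$ maps to the closed point of $\Spec S$ and $f$ is $P_0$-regular), showing that $C(\Phi, \Psi)$ is a stably reflexive complex with respect to $h$ and a hull for the $R$-module $M := \coker \bar\Phi$, which is itself stably reflexive with respect to $h$. It remains to identify $M$ with $I_R$. The map $\alpha \co R^{\oplus 2} \to I_R$, $(a, b) \mapsto a \bar X_1 + b \bar X_2$, is surjective and annihilates the columns of $\bar\Phi$ (the first by the Koszul relation, the second by $X_1 G_1 + X_2 G_2 = F$), so it factors through a surjection $\bar\alpha \co M \to I_R$. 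Its central fibre $\bar\alpha_0 \co \coker \bar\Phi_0 \to \fr{m}_A$ is an isomorphism: over $P_0$ the kernel of the surjection $P_0^{\oplus 2} \to \fr{m}_{P_0}$, $(a,b) \mapsto ax_1 + bx_2$, is generated by $(x_2, -x_1)$ by regularity of $x_1, x_2$, and consequently the preimage of $(f)$ under this map is generated by $(x_2, -x_1)$ and $(g_1, g_2)$, which are exactly the columns of the reduction of $\Phi$. Since $I_R$ is $S$-flat, applying $- \ot_S k$ to $0 \to \ker \bar\alpha \to M \to I_R \to 0$ remains exact on the left, so $\ker \bar\alpha \ot_S k \hra M \ot_S k$; the isomorphism on the fibre forces $\ker \bar\alpha \ot_S k = 0$, and Nakayama gives $\ker \bar\alpha = 0$.

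The main obstacle is the step in (i) of upgrading the fibre-wise identification $I_R / \fr{m}_S I_R = \fr{m}_A$ to the ideal equality $I = J$ in $P$, since a priori one only gets $F \in J + \fr{m}_S P$. It is precisely the joint $S$-flatness of $I$ and $J$ (the former from splitting by the section, the latter from Corollary \ref{cor.nak}) combined with the Nakayama argument on $I/J$ that closes the gap; once (i) is in hand, the rest is a direct application of Corollary \ref{cor.mf} followed by the fibre-wise Koszul identification.
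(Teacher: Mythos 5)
Your proof is correct and rests on the same pillars as the paper's: establish (i), observe that \(\Phi\Psi=F\cdot\mathrm{I}_2=\Psi\Phi\) so that \((\Phi,\Psi)\) is a matrix factorisation of the \(P\)-regular element \(F\) and Corollary \ref{cor.mf} applies, then identify \(\coker\bar\Phi\) with \(I_{R}\). The differences are in the sub-arguments, where you take a longer path. In (i) the paper works directly with \(I_{P}=\ker(P\ra S)\): since \(S\ra P\ra S\) is the identity, the sequence \(0\ra I_{P}\ra P\ra S\ra 0\) is \(S\)-split, so \(I_{P}\ot_{S}k\cong\fr{m}_{P_{0}}\) and Nakayama gives \(I_{P}=(X_{1},X_{2})\) at once; as \(F\in I_{P}\), the relation \(F=X_{1}G_{1}+X_{2}G_{2}\) is immediate, and your detour through \(I_{R}\), the ideal \(J\), and the flatness comparison \(J\ot_{S}k\xra{\sim}I\ot_{S}k\) is unnecessary (though correct). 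For the identification \(\coker\bar\Phi\cong I_{R}\) the paper argues once over \(P\), comparing the Koszul presentation \(0\ra P\ra P^{\oplus2}\ra I_{P}\ra0\) with \(\Phi\) and reading off from the snake lemma that \(I_{P}\ra\coker\Phi\) is onto with kernel \((F)\); your fibrewise Koszul computation combined with \(S\)-flatness of \(I_{R}\) and Nakayama achieves the same thing. One small imprecision: after lifting the \(x_{i}\) through \(I_{R}\) to \(X_{i}\in\fr{m}_{P}\), the images of the \(X_{i}\) in \(P_{0}\) need not equal the originally chosen regular system of parameters \(x_{1},x_{2}\) — they may differ by multiples of \(f\) — but since \(f\in\fr{m}_{P_{0}}^{2}\) they again form a regular system of parameters, so Corollary \ref{cor.nak} and the remainder of your argument still apply.
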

\begin{proof}
(i) By Corollary \ref{cor.nak} \(R\) is \(S\)-flat.
Let \(I_{P}\) be the kernel of the induced \(S\)-algebra map \(P\ra S\). Then \(I_{P}\ot_{S}k\cong\fr{m}_{P_{0}}\) which is generated by two elements, say \(x_{1}\) and \(x_{2}\). Pick liftings \(X_{i}\) in \(I_{P}\) of the \(x_{i}\). Then \(I_{P}\) is generated as ideal by \(X_{1}\) and \(X_{2}\). The kernel of \(I_{P}\ra I_{R}\) equals the kernel \((F)\) of \(P\ra R\). In particular \(F=X_{1}G_{1}+X_{2}G_{2}\) for some elements \(G_{1}\) and \(G_{2}\) which have to be non-units. 

(ii) Consider the commutative diagram with exact rows
\begin{equation}\label{eq.cok}
\xymatrix@-0pt@C+12pt@R-6pt@H-0pt{
0\ar[r] & P\ar[d]_(0.4){\tau}\ar[r]^(0.45){[X_{2},-X_{1}]^{\text{tr}}} & P^{\oplus 2}\ar[r]^{[X_{1},X_{2}]}\ar@{=}[d] & I_{P}\ar[r]\ar[d]^(0.42){\rho} & 0 \\
0\ar[r] & P^{\oplus 2}\ar[r]^{\Phi} & P^{\oplus 2}\ar[r] & \coker \Phi\ar[r] & 0
}
\end{equation}
The (inverse) connecting isomorphism \(P=\coker\tau\cong\ker\rho\) takes \(1\) to \(F\in I_{P}\). We get \(\coker\Phi\cong I_{P}/(F)\cong I_{R}\).
The pair \((\Phi,\Psi)\) is a matrix factorisation of \(F\). Then \(C(\Phi,\Psi)\) is a stably reflexive complex and a hull for \(I_{R}\) by Corollary \ref{cor.mf}.
\end{proof}
\begin{prop}\label{prop.mf2}
In addition to the assumptions in \textup{Lemma \ref{lem.mf2}} assume \(P= S[X_{1},X_{2}]_{\fr{m}}\) where \(\fr{m}\) is the maximal ideal \(\fr{m}_{S}+(X_{1},X_{2})\)\textup{.} Put \(A=P_{0}/(f)\) and let \(x_{i}\) and \(g_{i}\) denote the images in \(P_{0}\) of \(X_{i}\) and \(G_{i}\) respectively for \(i=1,2\)\textup{.} Suppose the \(g_{i}\) are contained in \(k[x_{1},x_{2}]\)\textup{.}
\begin{enumerate}
\item[(i)] The map \(\pi^{\textnormal{s}}\co \Proj\Sym_{R} (I_{R}^{*})\ra \Spec S\) is flat\textup{.}
\item[(ii)] The closed fibre \(\pi^{\textnormal{s}}{\times}_{S}\Spec k\) is isomorphic to \(\pi_{0}^{\tn{s}}\co \Proj\Sym_{A}(\fr{m}_{A}^{*})\linebreak[1]\ra\Spec k\) and has local complete intersection singularities\textup{.}
\item[(iii)] The exceptional fibre \(E\) of \(\Proj\Sym_{A}(\fr{m}_{A}^{*})\ra\Spec A\) is isomorphic to \(\BB{P}^{1}_{k}\)\textup{.}
\item[(iv)] The quotient \(I_{R}^{*}\ra I_{R}^{*}/R\) induces a section of \(\pi^{\textnormal{s}}\) which in the closed fibre gives a smooth point contained in \(E\)\textup{.}
\end{enumerate}
\end{prop}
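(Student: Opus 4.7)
The plan is to combine the stably reflexive module theory supplied by \textup{Theorem \ref{prop.Finn}} with the explicit matrix-factorisation hull of \textup{Lemma \ref{lem.mf2}} in order to present \(I_{R}^{*}\) concretely, and then to read all four claims off an explicit affine cover of the \(\Proj\). Since \(A=P_{0}/(f)\) is a \(1\)-dimensional complete intersection, hence Gorenstein, \textup{Theorem \ref{prop.Finn}} applies: \(I_{R}\) is stably reflexive with respect to \(h\) and \(I_{R}^{*}/R\cong S\). Dualising the \(2\)-periodic projective resolution \(\dots\ra R^{\oplus 2}\xra{\bar{\Psi}} R^{\oplus 2}\xra{\bar{\Phi}} R^{\oplus 2}\ra I_{R}\ra 0\) from \textup{Lemma \ref{lem.mf2}}(ii) and using \(\xt{i}{R}{I_{R}}{R}=0\) for \(i>0\) from \textup{Proposition \ref{prop.main}} yields an exact sequence \(R^{\oplus 2}\xra{\bar{\Phi}^{t}} R^{\oplus 2}\ra I_{R}^{*}\ra 0\), so that \(\Sym_{R}(I_{R}^{*})\cong R[\psi_{1},\psi_{2}]/(X_{2}\psi_{1}+G_{1}\psi_{2},\,-X_{1}\psi_{1}+G_{2}\psi_{2})\). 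Base-change compatibility of \(I_{R}^{*}\) from \textup{Proposition \ref{prop.main}}(iii) then identifies the closed fibre of \(\pi^{\tn{s}}\) with \(\Proj\Sym_{A}(\fr{m}_{A}^{*})\), yielding the first half of (ii).

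Next I would cover \(Y:=\Proj\Sym_{R}(I_{R}^{*})\) by the standard affine pieces \(D_{+}(\psi_{1})\) and \(D_{+}(\psi_{2})\). On \(D_{+}(\psi_{2})\) with coordinate \(u=\psi_{1}/\psi_{2}\), the ring is \(T_{2}=R[u]/(X_{2}u+G_{1},-X_{1}u+G_{2})\), and the identity \(X_{1}(X_{2}u+G_{1})+X_{2}(-X_{1}u+G_{2})=F\) puts \(F\) already in the two-generator ideal, so \(T_{2}\cong S[X_{1},X_{2},u]_{\fr{m}_{2}}/(X_{2}u+G_{1},-X_{1}u+G_{2})\); the situation on \(D_{+}(\psi_{1})\) is symmetric. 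In the closed fibre the chart becomes \(k[x_{1},x_{2},u]_{\tn{loc}}/(x_{2}u+g_{1},-x_{1}u+g_{2})\), where the hypothesis \(g_{i}\in k[x_{1},x_{2}]\) makes this a genuine polynomial chart; the ambient ring is \(3\)-dimensional regular and the quotient has dimension \(1\), so the two generators form a regular sequence and the chart is a \(1\)-dimensional local complete intersection. Applying \textup{Corollary \ref{cor.nak}} at the local ring \((R[u])_{\fr{q}}\) for every closed point \(\fr{q}\) of \(T_{2}\), where \(S\ra(R[u])_{\fr{q}}\) is flat and local, propagates regularity of the sequence and \(S\)-flatness from the fibre to the relative situation; this proves (i) and completes (ii).

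For (iii), the hypothesis \(F\in\fr{m}_{P}^{2}\) allows one to take \(G_{1},G_{2}\in\fr{m}_{P}\), so \(g_{i}(0,0)=0\); setting \(x_{1}=x_{2}=0\) in the two charts yields \(\Spec k[u]\) and \(\Spec k[v]\), glued along \(uv=1\), giving \(E\cong\BB{P}^{1}_{k}\). For (iv), under the identification \(I_{R}^{*}=\im\bar{\Psi}^{t}\sbeq R^{\oplus 2}\), the inclusion \(\iota\co I_{R}\hra R\) corresponds to the second column \((X_{1},X_{2})^{t}\) of \(\Psi^{t}\), i.e.\ to the generator \(\psi_{2}\); the surjection \(I_{R}^{*}\thr I_{R}^{*}/R\cong S\) from \textup{Theorem \ref{prop.Finn}}(ii) therefore cuts out \(\{\psi_{2}=0\}\) and defines a section which lies in \(D_{+}(\psi_{1})\) at \((X_{1},X_{2},v)=(0,0,0)\). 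The Jacobian of \((X_{2}+G_{1}v,-X_{1}+G_{2}v)\) at this point in the closed fibre has rank \(2\)---the \((1,2)\) and \((2,1)\) entries are \(1\) and \(-1\) while the remaining entries vanish since \(g_{i}(0,0)=0\)---so the section hits a smooth point, which obviously lies on \(E\) since \(x_{1}=x_{2}=0\) there. The main obstacle I anticipate is the relative flatness statement in (i): the fibre-level dimension count is clean, but one must apply \textup{Corollary \ref{cor.nak}} prime by prime on the non-local ring \(R[u]\), verifying its hypotheses at every closed point of \(Y\) while keeping the base-change isomorphism of \(I_{R}^{*}\) consistent with the chart-level description.
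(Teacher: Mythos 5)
Your proposal follows essentially the same route as the paper: the presentation \(\Sym_{R}(I_{R}^{*})\cong R[U,V]/(X_{2}U+G_{1}V,\,-X_{1}U+G_{2}V)\) coming from \(I_{R}^{*}\cong\coker\Phi^{\mathrm{t}}\), the two affine charts with \(F\) already in the chart ideal, Corollary \ref{cor.nak} to pass from a regular sequence in the closed fibre to \(S\)-flatness, base change of \(I_{R}^{*}\) via Theorem \ref{prop.Finn}(i) and Proposition \ref{prop.main}(iii) for (ii), the vanishing \(g_{i}(0,0)=0\) for (iii), and the identification of the section with \(V_{+}(\psi_{2})\) via the element \([X_{1},X_{2}]^{\mathrm{t}}\) for (iv). Your Jacobian computation at the section point is a welcome addition: the paper's proof of (iv) leaves the smoothness assertion implicit.

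The one place you diverge is the crux of (i)--(ii), namely that \((x_{2}u+g_{1},\,-x_{1}u+g_{2})\) is a regular sequence in the closed-fibre chart. The paper proves this by explicit manipulation (normalising so that \(g_{2}\in k[x_{2}]\), checking irreducibility of \(x_{1}u-g_{2}\), and treating \(g_{2}=0\) separately). You instead invoke Cohen--Macaulayness of the ambient \(3\)-dimensional regular ring together with the assertion ``the quotient has dimension \(1\)'' --- but that dimension statement is exactly what needs proof, since a common factor of the two generators would give a \(2\)-dimensional component. The gap is fillable from what you have already written: the identity \(x_{1}(x_{2}u+g_{1})+x_{2}(-x_{1}u+g_{2})=f\) shows the chart maps into the \(1\)-dimensional \(V(f)\subset\Spec k[x_{1},x_{2}]\); over any point of \(V(f)\) other than the origin one of \(x_{1},x_{2}\) is a unit, so the fibre in \(u\) is finite, while over the origin the fibre is \(\BB{A}^{1}\); hence \(\dim\leq 1\), and \(\geq 1\) by Krull's height theorem. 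With this inserted your argument is complete, and arguably cleaner than the paper's case analysis since it avoids the change of presentation and the irreducibility check.
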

\begin{proof}
(i) By Lemma \ref{lem.mf2}, \(I_{R}^{*}\cong\coker\Phi^{\text{tr}}\) , hence
\begin{equation}\label{eq.hom}
\Sym_{R}(I_{R}^{*})\cong R[U,V]/(X_{2}U+G_{1}V,G_{2}V-X_{1}U)\,.
\end{equation}
Put \(v=V/U\). Then \(\Q_{D_{+}(U)}\cong P[v]/(X_{2}+G_{1}v, X_{1}-G_{2}v)\) as \(F\) is contained in the ideal. We claim that \((x_{2}+g_{1}v,x_{1}-g_{2}v)\) is a regular sequence ideal. It follows that \((X_{2}+G_{1}v, X_{1}-G_{2}v)\) is a regular sequence ideal and that \(\Q_{D_{+}(U)}\) is \(S\)-flat by Corollary \ref{cor.nak}. As the claim is independent of the choice of presentation \(\Phi\ot_{S}k\) we can assume that \(g_{2}\) is contained in \(k[x_{2}]\). Then \(\Q_{D_{+}(U)}\ot_{S}k\) is isomorphic to \((P_{0}/(x_{1}))[v]/(x_{2}+vg'_{1})\) where \(g'_{1}=g_{1}(g_{2}v,x_{2})\) and the claim follows. Put \(u=U/V\). Then \(\Q_{D_{+}(V)}\cong P[u]/(X_{2}u+G_{1},X_{1}u-G_{2})\). As for the \(U\)-chart it is sufficient to show that \((x_{2}u+g_{1},x_{1}u-g_{2})\) is a regular sequence ideal. We can assume that \(g_{2}\) is contained in \(k[x_{2}]\). For \(g_{2}\neq0\) one checks that \(x_{1}u-g_{2}\) is irreducible in \(k[x_{1},x_{2},u]\). If \(g_{2}=0\) a direct argument shows that \((x_{2}u+g_{1},x_{1}u)\) is a regular sequence.

(ii) The closed fibre of \(\pi^{\text{s}}\) is obtained from \(I_{R}^{*}\ot_{S}k\). By Theorem \ref{prop.Finn} (i) and Proposition \ref{prop.main} (iii), \(I_{R}^{*}\ot_{S}k\cong \fr{m}_{A}^{*}\). By the argument for (i) the closed fibre has local complete intersection singularities. 

(iii) The exceptional fibre \(E\) is given by \(\Proj\Sym_{A}(\fr{m}_{A}^{*}\ot_{A}k)\). By Theorem \ref{prop.Finn} (iv), \(\fr{m}_{A}^{*}\ot_{A} k\cong k^{\oplus 2}\) as \(A\)-modules. 

(iv) We have \(I_{R}^{*}/R\cong S\) by Theorem \ref{prop.Finn} (ii) which gives a section of \(\pi^{\tn{s}}\) by \cite[4.2.3]{EGAII}. Inspecting the presentation in Lemma \ref{lem.mf2} we find that the image of \(1\) under the map \(R\ra I_{R}^{*}\) is given by \([X_{1},X_{2}]^{\text{tr}}\). The section of \(\pi^{\text{s}}\) is therefore obtained by dividing the homogeneous coordinate ring out by \(V\) which gives the quotient ring \(R[U]/(U)(X_{1},X_{2})\). 
\end{proof}
\section{The stabilisation map}\label{sec.stab}
Let \(\bar{\ms{M}}_{g,n}\) denote the stack of stable \(n\)-pointed curves. In particular it is a category fibred in groupoids over the base category of schemes; cf.\ Section \ref{sec.approx}. An object in \(\bar{\ms{M}}_{g,n}\) is a proper and flat map \(\pi\co C\ra T\) of schemes together with \(n\) sections \(\sigma_{i}\co T\ra C\) such that the geometric fibres \(C_{\bar{t}}\) of \(\pi\) are connected curves which together with the points \(\sigma_{i}(\bar{t})\) have certain properties; see \cite{knu:83a}. E.g.\ the only singularities on the curve should be ordinary double points (nodes), the \(n\) points should be smooth and distinct, and the \(n\)-pointed curve should have finite automorphism group.
The morphisms in \(\bar{\ms{M}}_{g,n}\) are commutative, cartesian diagrams. There is also a stack \(\bar{\ms{C}}_{g,n}\) with objects stable \(n\)-pointed curves plus an extra section \(\Delta\co T\ra C\) without conditions. Forgetting this extra section gives a morphism of fibred categories \(\bar{\ms{C}}_{g,n}\ra \bar{\ms{M}}_{g,n}\) and \(\bar{\ms{C}}_{g,n}\) is called the universal curve. The stabilisation map is a morphism of fibred categories \(s\co \bar{\ms{C}}_{g,n}\ra \bar{\ms{M}}_{g,n+1}\). Knudsen uses it to study divisors on the \(\bar{\ms{M}}_{g,n}\) and to construct the clutching maps. We sketch the main arguments in the construction of \(s(\pi,\{\sigma_{i}\},\Delta)=(\pi^{s}\co C^{s}\ra T,\{\sigma_{i}^{s}\})\). We assume that all schemes are noetherian.

Since the points of \(\sigma_{i}(T)\) in \(C\) are smooth over \(T\) by \cite[6.7.8]{EGAIV2}, the \(\sigma_{i}(T)\) are locally principal divisors of \(C\) by \cite[17.10.4]{EGAIV4}, but this is not the case for \(\Delta\).  Let \(\mc{I}=\mc{I}_{\Delta(T)}\) be the ideal sheaf in \(\Q_{C}\) defining \(\Delta\). Let \(\Q_{C}\ra \Q_{C}(\Sigma_{i} \sigma_{i})\) and \(\Q_{C}\ra \mc{I}^{*}\) be the duals of the ideal inclusions. Let the coherent sheaf \(\mc{F}=\mc{F}_{C/T}\) on \(C\) be defined by the exact sequence
\begin{equation}\label{eq.K}
0\ra \Q_{C}\lra \mc{I}^{*}\oplus\Q_{C}(\Sigma_{i}\sigma_{i})\xra{\,\,\rho\,\,} \mc{F}_{C/T}\ra 0\,.
\end{equation}
Let \(q\co C^{s}:=\Proj\Sym_{\Q_{C}}\!\mc{F}\ra C\) and \(\pi^{s}=\pi q\). The quotient \(\mc{F}\ra \mc{F}/\rho(\Q_{C}(\Sigma_{i} \sigma_{i}))\linebreak[1]=\mc{L}_{\Delta}\) defines a section \(\Delta^{s}\co T\ra C^{s}\) lifting \(\Delta\) provided we know that \(\Delta^{*}\mc{L}_{\Delta}\) is a line bundle on \(T\); cf.\ \cite[4.2.3]{EGAII}. This local question is answered in Corollary \ref{cor.Finn} (ii). Let \(\sigma_{n+1}^{s}:=\Delta^{s}\). Similarly \(\sigma_{i}^{*}(\mc{F}/\rho(\mc{I}^{*}))\) defines the lifting \(\sigma_{i}^{s}\) for \(i=1,\dots,n\). If \eqref{eq.K} commutes with base change \(s\) will define a functor (after choosing representatives which by construction are unique up to unique isomorphisms). Since there is a global comparison map \(f^{*}(\mc{F}_{C/T})\ra \mc{F}_{C'/T'}\) where \(f\co C'=C{\times}_{T}T'\ra C\) is the projection, the question is local. The critical case is where \(\Delta(\bar{t})\) is an ordinary double point. After localisation \(\mc{F}\) equals \(\mc{I}^{*}\) and \(f^{*}(\mc{I}^{*})\ra (f^{*}\mc{I})^{*}\) is an isomorphism by Corollary \ref{cor.Finn} (i). 

To prove flatness of \(C^{s}\ra T\), we consider the local situation. Put \(t=\pi(x)\) for  \(x\in C\), let \(\Q\) and \(\vL\) denote the henselisations of \(\Q_{C,x}\) and \(\Q_{T,t}\) respectively. Moreover, let \(k=k(t)\) and \(A=\Q\ot_{T}k\) with \(k\)-algebra map \(A\ra k\) given by the section \(\Delta_{C,x}\ot k\). Knudsen's idea is to write up an explicit (formally) versal family for \(\df{}{A\ra k}\), say with base ring \(R\), do the local version of the construction of \(C^{s}\) over this family and inspect it for flatness. By versality (assuming \(\vL\) is excellent) \(\vL\ra\Q\) is obtained by base change along some \(\vL\)-algebra map \(R\ra\vL\). This family is then flat and flatness of \(C^{s}\) along the fibre over \(x\) follows by faithful flatness of henselisation. The non-excellent case is similar, formal versality is sufficient, and one can even work with complete rings and formal families. 

For the explicit formally versal (f.v.) family we consider the critical case where \(\Delta(\bar{t})\) is an ordinary double point. In \cite{knu:12} Knudsen shows that \(\hat{\Q}\ot_{\vL}k(t)\) is isomorphic to \(k(t)[[x_{1},x_{2}]]/(q)\) where \(q=q(x)=x_{1}^{2}+\gamma x_{1}x_{2}+\delta x_{2}^{2}\) for \(\gamma\) and \(\delta\) in \(k=k(t)\) with discriminant \(\gamma^{2}-4\delta\neq 0\). One could instead take the strict henselisation (also faithfully flat) of the local rings \(\Q_{T,t}\) and \(\Q_{C,x}\) and \(q\) would split as a product of two distinct linear forms over \(k^{\text{sep}}\), but this simplification of the equation would only change the following argument nominally. Let \(A\) be the henselisation of \(k[x_{1},x_{2}]/(q)\). By Theorem \ref{thm.main} one obtains the f.v.\ family of \(\df{}{A\ra k}\) from the f.v.\ family of \(\df{}{A}\). The Zariski tangent space \(\df{}{A}(k[\vare])\) equals the first Andr{\'e}-Quillen cohomology \(\cH^{1}(k,A,A) \cong A/(\im\nabla(q))\) where \(\nabla(q)\) is the Jacobi matrix. By the discriminant condition  
\begin{equation}
\df{}{A}(k[\vare])\cong k[x_{1},x_{2}]/(q,2x_{1}+\gamma x_{2},2\delta x_{2}+\gamma x_{1})\cong k. 
\end{equation}
Put \(\tilde{q}=\tilde{q}(x)=x_{1}^{2}+\tilde\gamma x_{1}x_{2} + \tilde{\delta}x_{2}^{2}\) where \(\tilde{\gamma}\) and \(\tilde{\delta}\) in \(\vL\) are liftings of \(\gamma\) and \(\delta\). Put \({}^{v}\tilde{Q}=\tilde{q}+z\) for a variable \(z\) and let \(S\ra R\) be the henselisation of \(\vL[z]\ra\vL[x_{1},x_{2},z]/({}^{v}\tilde{Q})\). This is a f.v.\ family for \(\df{}{A}\) (cf.\ proof of Corollary \ref{cor.main2}). Note that \(\vL[x_{1},x_{2},z]/({}^{v}\tilde{Q})\cong\vL[x_{1},x_{2}]\) where \(z\mapsto -\tilde{q}(x)\). Corollary \ref{cor.main2} shows that the f.v.\ family \(R\ra R^{(2)}\ra R\) in \(\df{}{A\ra k}\) is particularly simple in this case. We use \(s_{1},s_{2}\) instead of the variables \(x_{1},x_{2}\) in the left \(R\).  The obvious ring homomorphism 
\begin{equation}
L:=\vL[s_{1},s_{2},x_{1},x_{2}]/(\tilde{q}(x)-\tilde{q}(s))\lra \vL[s_{1},s_{2}]\ot_{\vL[z]}\vL[x_{1},x_{2}]
\end{equation}
is an isomorphism, \(L^{\text{h}}=R^{(2)}\) and \(\hat{L}\) is precisely Knudsen's hull in Proposition 2.1 in \cite{knu:12}.

The equations for \(C^{s}\) above \(x\in C\) (after completion) is given in Proposition \ref{prop.mf2}. Let \({}^{\text{v}\!}S\) and \({}^{\text{v}\!}P\) denote the Zariski localisation in the obvious maximal ideals of \(\vL[s_{1},s_{2}]\) and \({}^{\text{v}\!}S[x_{1},x_{2}]\), respectively. Put \(F=\tilde{q}(x)-\tilde{q}(s)\) and \({}^{\text{v}\!}R={}^{\text{v}\!}P/(F)\). Let \(I\) be the kernel of the \({}^{\text{v}\!}S\)-algebra map \({}^{\text{v}\!}R\ra {}^{\text{v}\!}S\) defined by \(x_{i}\mapsto s_{i}\) for \(i=1,2\). To find the differential in the stably reflexive complex which is a hull for \(I\) (see Lemma \ref{lem.mf2}), put \(X_{i}=x_{i}-s_{i}\) for \(i=1,2\). Then \(G_{1}=x_{1}+\tilde\gamma x_{2}+s_{1}\) and \(G_{2}=\tilde\delta (x_{2}+s_{2})+\tilde\gamma s_{1}\) gives a solution to the equation \(F=X_{1}G_{1}+X_{2}G_{2}\). The homogeneous coordinate ring is given in \eqref{eq.hom}. In the closed fibre the local charts are (without localisation in \((x)\)):
\begin{align}
\Q_{D_{+}(U)}\ot k &\cong k[x_{2},v]/(x_{2}(\delta v^{2}+\gamma v+1))\label{eq.v} \\
\Q_{D_{+}(V)}\ot k &\cong k[x_{2},u]/(x_{2}(u^{2}+\gamma u+ \delta))
\end{align}
In particular these are local complete intersections and \(\Proj\Sym_{{}^{\text{v}\!}R}I^{*}\) is flat over \({}^{\text{v}\!}S\). Moreover, the closed fibre is reduced and connected, the exceptional component \(x_{1}=0=x_{2}\) is a \(\BB{P}^{1}_{k}\), and the intersection points with the other components in the geometric fibre are two distinct ordinary double points as the discriminants are non-zero. Also note that the image of \(1\) under \({}^{\text{v}\!}R\ra I^{*}\) is given by \([X_{1},X_{2}]^{\text{tr}}\). The section \(\Delta^{s}\), defined by the quotient \(I^{*}/\,{}^{\text{v}\!}R\), is hence given by dividing out by the homogeneous coordinate \(V\). In the local chart \(D_{+}(U)\) this corresponds to \(v=0\) which is never a solution to \(\delta v^{2}+\gamma v+1=0\).

Note that while Corollary \ref{cor.Finn} (i) and (ii) are used in this argument, Corollary \ref{cor.Finn} (iii) is not. We use Theorem \ref{prop.Finn} (iv) in the proof of Proposition \ref{prop.mf2} to show that stabilisation inserts a \(\BB{P}^{1}\) at a singular point.
In Knutsen's proof of \cite[3.7]{knu:83a}, which states that the clutching map is a closed immersion of stacks, he claims that Theorem \ref{prop.Finn} (iv) generalises to families in the case the point on the closed fibre is a node. In Remark \ref{rem.Finn2} and Example \ref{ex.Finn2} we explain why this is wrong and why it does not matter for the proof.
\providecommand{\bysame}{\leavevmode\hbox to3em{\hrulefill}\thinspace}
\providecommand{\MR}{\relax\ifhmode\unskip\space\fi MR }
\providecommand{\MRhref}[2]{%
  \href{http://www.ams.org/mathscinet-getitem?mr=#1}{#2}
}
\providecommand{\href}[2]{#2}

\end{document}